\definecolor{darkblue}{rgb}{0,0,0.4}
\numberwithin{equation}{section}
\newtheorem{lem}{Lemma}[section]
\newtheorem{theorem}[lem]{Theorem}
\newtheorem{lemma}[lem]{Lemma}
\newtheorem{corollary}[lem]{Corollary}
\newtheorem{proposition}[lem]{Proposition}
\theoremstyle{definition}
\newtheorem{definition}[lem]{Definition}
\theoremstyle{remark}
\newtheorem{remark}[lem]{Remark}
\newtheorem{idea}{Idea}
\numberwithin{figure}{section}
\newcommand{\Appendix}[1]{\hyperref[app:#1]{Appendix~\ref*{app:#1}}}
\newcommand{\Section}[1]{\hyperref[sec:#1]{Section~\ref*{sec:#1}}}
\newcommand{\Subsection}[1]{\hyperref[subsec:#1]{Subsection~\ref*{subsec:#1}}}
\newcommand{\Lemma}[1]{\hyperref[lem:#1]{Lemma~\ref*{lem:#1}}}
\newcommand{\Theorem}[1]{\hyperref[thm:#1]{Theorem~\ref*{thm:#1}}}
\newcommand{\Citethm}[1]{\hyperref[citethm:#1]{Theorem~\ref*{citethm:#1}}}
\newcommand{\Definition}[1]{\hyperref[def:#1]{Definition~\ref*{def:#1}}}
\newcommand{\Remark}[1]{\hyperref[rem:#1]{Remark~\ref*{rem:#1}}}
\newcommand{\Figure}[1]{\hyperref[fig:#1]{Figure~\ref*{fig:#1}}}
\newcommand{\Conjecture}[1]{\hyperref[conj:#1]{Conjecture~\ref*{conj:#1}}}
\newcommand{\Corollary}[1]{\hyperref[cor:#1]{Corollary~\ref*{cor:#1}}}
\newcommand{\Proposition}[1]{\hyperref[prop:#1]{Proposition~\ref*{prop:#1}}}
\newcommand{\Question}[1]{\hyperref[ques:#1]{Question~\ref*{ques:#1}}}
\newcommand{\Example}[1]{\hyperref[exam:#1]{Example~\ref*{exam:#1}}}
\newcommand{\Table}[1]{\hyperref[table:#1]{Table~\ref*{table:#1}}}
\newcommand{\Restric}[1]{\hyperref[restric:#1]{Restriction~\ref*{restric:#1}}}
\newcommand{\R}{\mathbb{R}}
\newcommand{\Z}{\mathbb{Z}}
\newcommand{\mf}{\mathfrak}
\newcommand{\wt}{\widetilde}
\newcommand{\ol}[1]{m(#1)}
\newcommand{\si}{\sigma}
\newcommand{\al}{\alpha}
\newcommand{\be}{\beta}
\newcommand{\ep}{\epsilon}
\newcommand{\from}{\colon}
\newcommand{\onto}{\twoheadrightarrow}
\newcommand{\set}[2]{\{#1\mid#2\}}
\newcommand{\HFK}{\mathit{HFK}}
\DeclareMathOperator{\Id}{Id}
\DeclareMathOperator{\Sq}{Sq}
\DeclareMathOperator{\Hom}{Hom}
\newcommand{\Kh}{\mathit{Kh}}
\newcommand{\KhCx}{\mathcal{C}_{\Kh}}
\newcommand{\Cat}{\mathscr{C}}
\newcommand{\Realize}[2][{}]{|#2|_{#1}}
\newcommand{\CubeFlowCat}{\mathscr{C}_C}
\newcommand{\KhFlowCat}{\mathscr{C}_{\mathit{Kh}}}
\newcommand{\gr}{\mathrm{gr}}
\newcommand{\KhSpace}{\mathcal{X}_\mathit{Kh}}
\newcommand{\TupV}{\mathbf}
\newcommand{\Frame}{\varphi}
\newcommand{\Cube}{\mathcal{C}}
\newcommand{\diff}{\delta}
\newcommand{\diffKh}{\diff_{\Kh}}
\newcommand{\diffLee}{\diff_{\mathit{Lee}}}
\newcommand{\co}{\colon}
\newcommand{\bdy}{\partial}
\newcommand{\DD}{\mathbb{D}}
\newcommand{\ZZ}{\mathbb{Z}}
\DeclareMathOperator{\image}{im}
\DeclareMathOperator{\Span}{span}
\newcommand{\BNcx}{\mathcal{C}}
\newcommand{\LeeCx}{\mathcal{C}_{\mathit{Lee}}}
\newcommand{\Filt}{\mathcal{F}}
\newcommand{\Red}{\textcolor{red}}
\newcommand{\xbar}{x_{\text{\rotatebox[origin=c]{90}{$-$}}}}
\newcommand{\Field}{\mathbb{F}}
\newcommand{\SL}{\mathit{sl}}
\newcommand{\maxsl}{\overline{\mathit{sl}}}
\newcommand{\TB}{\mathit{tb}}
\newcommand{\rot}{\mathit{rot}}
\newcommand{\lk}{\operatorname{lk}}
\newcommand{\std}{\mathit{std}}
\DeclareMathOperator{\Cone}{Cone}
\newcommand{\st}{^\text{st}}
\newcommand{\sphere}{\mathbb{S}}
\newcommand{\SZ}{\mathit{SZ}}
\newcommand{\ppsi}{\psi^{+}}
\newcommand{\npsi}{\psi^{-}}
\newcommand{\pnpsi}{\psi}
\newcommand{\dpsi}{\psi^{\mathit{diff}}}
\begin{document}

\title{On transverse invariants from Khovanov homology}

\author{Robert Lipshitz}
\thanks{RL was partly supported by NSF grant DMS-1149800 and a Sloan Research Fellowship.}
\email{\href{mailto:lipshitz@math.columbia.edu}{lipshitz@math.columbia.edu}}
\address{Department of Mathematics, University of North Carolina,
  Chapel Hill, NC 27599}

\author{Lenhard Ng}
\thanks{LN was partly supported by NSF grant DMS-0846346.}
\email{\href{mailto:ng@math.duke.edu}{ng@math.duke.edu}}
\address{Department of Mathematics, Duke University,
  Durham, NC 27708}

\author{Sucharit Sarkar}
\thanks{SS was supported by a Clay Mathematics Institute Postdoctoral Fellowship}
\email{\href{mailto:sucharit@math.princeton.edu}{sucharit@math.princeton.edu}}
\address{Department of Mathematics, Princeton University, Princeton,
  NJ 08544}

\subjclass[2010]{\href{http://www.ams.org/mathscinet/search/mscdoc.html?code=57M25,57R17}{57M25,57R17}}

\keywords{}

\date{\today}

\begin{abstract}
  In~\cite{Plam-06-KhTrans}, O.~Plamenevskaya associated to each
  transverse knot $K$ an element of the Khovanov homology of $K$. In
  this paper, we give two refinements of Plamenevskaya's invariant,
  one valued in Bar-Natan's deformation
  (from~\cite{Bar-kh-tangle-cob}) of the Khovanov complex and another
  as a cohomotopy element of the Khovanov spectrum
  (from~\cite{RS-khovanov}). We show that the first of these
  refinements is invariant under negative flypes and $\SZ$ moves; this
  implies that Plamenevskaya's class is also invariant under these
  moves. We go on to show that for small-crossing transverse knots
  $K$, both refinements are determined by the classical invariants of
  $K$.
\end{abstract}

\maketitle


\section{Introduction}\label{sec:intro}

Transverse links have emerged as central objects of study in
three-dimensional contact geometry. We will restrict our attention to
transverse links in standard contact $\R^3$: by a \textit{transverse
  link}, we mean a knot or link in $\R^3$ that
is everywhere transverse to the $2$-plane field $\ker(dz-y\,dx)$. One
typically studies transverse links up to \textit{transverse isotopy},
or isotopy through a one-parameter family of transverse links.

Up to transverse isotopy, transverse links have two \emph{classical
invariants}: the underlying topological link type and the self-linking
number $\SL \in \Z$. (Strictly speaking, for a multi-component link,
each component has a self-linking number; see Remark~\ref{rmk:sl}.)
A natural question is whether there are
transverse links that have the same classical invariants but are not
transversely isotopic. The answer is yes, with the earliest examples
given by Etnyre--Honda \cite{EtnyreHonda-cabling} and Birman--Menasco \cite{BirmanMenasco-transverse}, but the question is
surprisingly subtle. By contrast, the corresponding question for
Legendrian links, which are also central to contact geometry, was
answered considerably earlier by Chekanov \cite{Chekanov-DGA}.

One approach to distinguishing transverse links with the same
classical invariants is to introduce further, more refined invariants
of transverse links. The first candidate for such an invariant was
introduced by Plamenevskaya \cite{Plam-06-KhTrans}: to a transverse
link of topological type $K$, this associates a distinguished class in
the Khovanov homology of $K$. Since Plamenevskaya's groundbreaking
work, transverse invariants of a similar flavor have been discovered
by Wu \cite{Wu-transverse} in $\mathfrak{sl}_n$ Khovanov--Rozansky
homology, and by Ozsv\'ath--Szab\'o--Thurston
\cite{OSzT-hf-legendrian} and Lisca--Ozsv\'ath--Stipsicz--Szab\'o
\cite{LOSS} in knot Floer homology. (Knot contact homology produces a
transverse invariant of a somewhat different flavor, called transverse
homology \cite{EENS-transverse,Ng-transhom}.)

\emph{A priori}, it might be the case that some of these invariants are
determined by the smooth link type and self-linking number.
An invariant of transverse links is called
\textit{effective} if it achieves different values for some pair of
transverse links with the same classical invariants. It is known that
the invariant in knot Floer homology is effective
\cite{NOT-hf-transverse}, as is transverse homology
\cite{Ng-transhom}.

It has long been an open question whether
Plamenevskaya's original invariant in Khovanov homology is effective.
One goal of this paper is to shed some light on this question, although
we do not resolve it. To produce candidates for distinct
transverse links with the same classical invariants, two techniques
are commonly considered: negative braid flypes on braids and $\SZ$ moves on
Legendrian links. (There are other techniques as well, but these two
seem to be the most successful for small-crossing knots.) In fact,
we will show that negative braid flypes and $\SZ$ moves are equivalent for transverse
knots; see Proposition~\ref{prop:flype-SZ}.

In Theorem~\ref{thm:flype-inv}, we show that the Plamenevskaya
invariant is unchanged by negative braid flypes, and thus by $\SZ$
moves as well.  Our result can be seen as evidence that the
Plamenevskaya invariant may not be effective; by contrast, both the
$\HFK$ transverse invariant and transverse homology can distinguish
transverse knots related by these moves. There are transverse links
that are known not to be related by these moves, but the simplest one
known to the authors is topologically a certain cable of the trefoil
knot \cite{EtnyreLafountainTosun} and (we believe) outside the reach
of current technology for computing the Plamenevskaya invariant.

In a different direction, we give three refinements of the
Plamenevskaya invariant, which could be effective even if the original
invariant is not. The first of these is a filtered version of the
Plamenevskaya invariant, living in Bar-Natan's (or, if one prefers,
Lee's) deformation of the Khovanov
complex~\cite{Bar-kh-tangle-cob,Lee-kh-endomorphism}; see
Theorem~\ref{thm:filt-inv}. In fact, this filtered invariant comes in
two versions, and subtracting the two versions gives another
transverse invariant, \emph{a priori} incomparable to the filtered or
original Plamenevskaya invariants; see
Theorem~\ref{thm:diff-is-invt}. Finally, a space-level version
$\KhSpace(K)$ of Khovanov homology was recently
constructed~\cite{RS-khovanov}, and the (original) Plamenevskaya invariant admits
a refinement as an element of the stable cohomotopy (rather than
cohomology) groups of $\KhSpace(K)$; see
Theorem~\ref{thm:cohtpy}. This leads to a number of computable
auxiliary invariants; see Section~\ref{sec:computable}.

Unfortunately, we have also been unable to show that any of these
refinements is effective. In particular, the filtered invariants are
unchanged by negative flypes and $\SZ$ moves.  There are also simpler
structural results that mean that for small-crossing knots, the
refinements have no non-classical information; see
Sections~\ref{sec:sad} and~\ref{sec:computable}.

However, there are some indications that some of the refinements may stand
a better chance of being effective than the original Plamenevskaya
invariant by itself.
For instance, over a ring where $2$ is
invertible, the original invariant agrees for any two transverse knots
that become the same after one stabilization (see
Proposition~\ref{prop:olgastabinv} for the exact result), while we do
not know if this is true for the filtered invariant. We know even less
about the behavior of the cohomotopy invariant; it might even be able
to distinguish transverse knots related by negative
flypes.

We remain
optimistic that, with more work on computational tools, for more
complicated knots, both the filtered
Plamenevskaya invariant and the cohomotopy Plamenevskaya invariant
will turn out to be effective. For the latter, we include some
discussion in Section~\ref{sec:computable} of some possible ways that
effectiveness might be tested.

\subsection*{Acknowledgments} We thank Olga Plamenevskaya and Jacob
Rasmussen for useful conversations. We also thank the referees for
their helpful suggestions.

\section{Some constructions in contact geometry}\label{sec:contact}

Here we review some well-known facts involving transverse knots and
links, referring the reader to the survey paper \cite{Etnyresurvey}
for general background on transverse knots and links, and to
\cite{NgThurston,KhandhawitNg} for further background on
some of the specifics that we give
here. By a \emph{transverse link} (respectively, \emph{Legendrian
  link}) we mean a knot or a link in $\R^3$ that is everywhere
transverse (respectively, tangent) to the standard contact structure
$\ker(dz-y\,dx)$; by a transverse knot (respectively, \emph{Legendrian
  knot}) we mean a transverse link (respectively, Lendendrian link) of
one component. There are two well-known
correspondences, one between transverse links and equivalence classes
of braids, and another
between transverse links and equivalence classes of Legendrian
links. We describe each of
these in turn, and then present a result linking them.

\subsection{Transverse links and braids}\label{ssec:flype}

By work of Bennequin \cite{Bennequin}, any braid can be naturally
viewed as a transverse link (whose topological link type is the closure of the
braid), and every transverse link arises in this
way (up to transverse isotopy) from some braid. The transverse Markov
Theorem \cite{OS-knot-transMarkov,Wri-knot-transMarkov} states that
under this correspondence, transverse links up to transverse isotopy
can be identified with braids up to
conjugation and positive braid
stabilization and destabilization,
\[
B \longleftrightarrow B\sigma_m,
\]
where $B$ is an element of the braid group $B_m$ and $B\sigma_m \in
B_{m+1}$. We refer to conjugation and positive (de)stabilization as
\textit{transverse Markov moves}.
The self-linking number of a transverse link $T$ can be
expressed in terms of a corresponding braid $B$ as $\SL(T) =
w-m$, where $w$ is the writhe of $B$ (the sum of the
exponents of the braid word) and $m$ is the braid index of $B$.

\begin{remark}\label{rmk:sl}
For a transverse link $T = T_1 \cup \cdots \cup T_r$ with $r \geq 2$
components, the self-linking number of $T$ is related to the
self-linking number of its components as follows:
\[
\SL(T) = \sum_i \SL(T_i) + 2 \sum_{i<j} \lk(T_i,T_j),
\]
where $\lk$ represents the (topological) linking number. This follows
from the formula $\SL(T)=w-m$ above, or from any of the standard
definitions of self-linking number in contact geometry (cf.\
Section~\ref{ssec:SZ}). The $r$-tuple
$(\SL(T_1),\ldots,\SL(T_r)) \in \Z^r$ is invariant under transverse
isotopy, and in some sense should be considered the true ``self-linking
number'' associated to $T$. However, for consistency we will refer to
the single integer $\SL(T)$ as the self-linking number.
\end{remark}

There is also a notion of negative braid
stabilization and destabilization,
\[
B \longleftrightarrow B\sigma_m^{-1},
\]
for $B\in B_m$. This descends to a well-defined operation on
transverse links called \emph{transverse stabilization}, which decreases
self-linking number by $2$. Given any two transverse links
representing the same topological link type, one can perform some
number of transverse stabilizations to each to obtain transversely
isotopic links.

Birman and Menasco
introduced a class of flype operations on
certain braids that preserves the braid index as well as the
topological link type of the braid closure; see
\cite{BirmanMenasco-3braid} for $3$-braids,
generalized in \cite{BirmanMenasco-MTWSI} for arbitrary braid
index. In this paper, we will use ``flype'' to mean the following.

\begin{definition}\label{def:flype}
   Let $A, B\in B_m$ be braids and $k\in\ZZ$. We say that the braids
\[
A \sigma_m^k B \sigma_m,
\hspace{3ex}
A \sigma_m B \sigma_m^k,
\]
which are elements in $B_{m+1}$,
are related by a \emph{positive flype}. Similarly, we say that
\[
A\sigma_m^{k}B\sigma_m^{-1},
\hspace{3ex}
A\sigma_m^{-1}B\sigma_m^{k}
\]
   are related by a \emph{negative flype}. See
   Figure~\ref{fig:flype}.
\end{definition}

\begin{figure}
  \centering
  \includegraphics[width=4in]{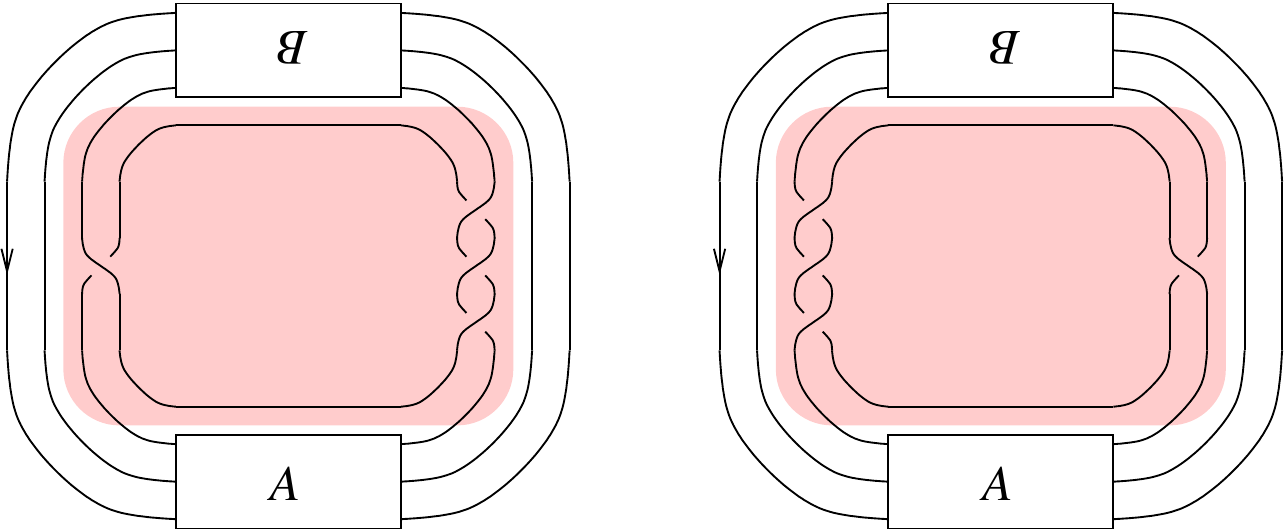}
  \caption{\textbf{Negative flype.}
    Pictured here: the closures of braids $A \sigma_3^3 B \sigma_3
    \sigma_3^{-1}$ (left) and $A \sigma_3^{-1} B \sigma_3^3$ (right),
    for $A,B \in B_3$. Note that the diagrams are related by a
    standard topological flype in the shaded regions.
}
  \label{fig:flype}
\end{figure}

\noindent
(Note that Birman and Menasco's original definition of flypes in
\cite{BirmanMenasco-MTWSI} is actually a more general, weighted
version of the flypes that we consider here.)
In addition to link type and braid index, both flypes preserve the
self-linking number of the corresponding transverse link.

One can express a negative flype as a
composition of conjugation, one negative braid stabilization, and one
negative braid destabilization. Since this will be important to us, we
write down a precise sequence of braid moves here, labeled by the
relevant braid operations as well as the corresponding Reidemeister
moves on the diagram for the closed braid (cf.\
\cite[Figure~5]{BirmanMenasco-MTWSI}):
\begin{equation}
  \label{eq:flype-moves}
\begin{aligned}
 A\sigma_m^{k}B\sigma_m^{-1}
&\to A \sigma_m^{-1}\sigma_m^{k+1} B \sigma_m^{-1} &&
\text{(Reidemeister II)}
\\
&\to A \sigma_m^{-1}\sigma_{m+1}^{-1}\sigma_{m}^{k+1} B \sigma_m^{-1}
&& \text{(negative braid stabilization; Reidemeister I)}\\
&\to A \sigma_{m+1}^{k+1}\sigma_m^{-1}\sigma_{m+1}^{-1} B
\sigma_m^{-1} &&
\text{(braid relation; Reidemeister III)}\\
&\to \sigma_{m+1}^{k+1} A \sigma_m^{-1}B \sigma_{m+1}^{-1}
\sigma_m^{-1} &&
\text{(braid relation---far commutativity)}\\
&\to A \sigma_m^{-1}B \sigma_{m+1}^{-1}
\sigma_m^{-1} \sigma_{m+1}^{k+1} &&
\text{(braid conjugation)}\\
&\to A \sigma_m^{-1}B \sigma_m^{k+1} \sigma_{m+1}^{-1}
\sigma_m^{-1} &&
\text{(braid relation; Reidemeister III)}\\
&\to A \sigma_m^{-1}B \sigma_m^{k+1}
\sigma_m^{-1} &&
\text{(negative braid destabilization; Reidemeister I)}\\
&\to A \sigma_m^{-1} B \sigma_m^k &&
\text{(Reidemeister II)}.
\end{aligned}
\end{equation}
Since these moves involve negative braid (de)stabilization, they do
not yield a transverse isotopy, and indeed there are many
examples of negative flypes producing distinct transverse links; see
e.g.~\cite{BirmanMenasco-transverse,KhandhawitNg}.

In a completely analogous way, one can express a positive flype as a
composition of braid conjugation, one positive braid stabilization,
and one positive braid destabilization. In this case it follows from
the transverse Markov theorem that positive flypes preserve transverse
type.

\begin{definition}\label{def:flype-eq}
Two transverse links $T,T'$ are \emph{flype-equivalent} if there are transverse links
$T_0=T,T_1,\ldots,T_k=T'$ such that for each $i=1,\ldots,k$, $T_{i-1}$ and $T_i$ can be represented by braids that differ by a flype.
\end{definition}
\noindent Equivalently, transverse links are flype-equivalent if they have braid representatives that are related by a sequence of
the following braid moves: braid conjugation, positive braid (de)stabilization, and negative flypes. (Note in particular that since positive flypes
can be expressed in terms of conjugation and positive
(de)stabilization, they can be omitted here and in
Definition~\ref{def:flype-eq} if desired.)

If $B_1,B_2$ are braids related by a negative flype and $T_1,T_2$ are the corresponding transverse links, then by \eqref{eq:flype-moves}, $S(T_1)$ and $S(T_2)$ are transversely isotopic, where $S$ represents transverse stabilization. Thus we have the following result.

\begin{proposition}\label{prop:transverse-equivalence-list}
Let $T,T'$ be transverse links. Then the following properties satisfy $(\ref{flype1}) \Rightarrow (\ref{flype2}) \Rightarrow (\ref{flype3})$:
\begin{enumerate}
\item\label{flype1}
$T,T'$ are flype-equivalent;
\item\label{flype2}
$S(T)$ and $S(T')$ are transversely isotopic;
\item\label{flype3}
$\SL(T) = \SL(T')$ and $T,T'$ are topologically isotopic.
\end{enumerate}
\end{proposition}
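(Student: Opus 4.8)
The implication $(\ref{flype2}) \Rightarrow (\ref{flype3})$ is essentially immediate: transverse stabilization $S$ does not change the topological link type, so $S(T)$ and $S(T')$ being transversely isotopic forces $T$ and $T'$ to be topologically isotopic; and since $S$ decreases the self-linking number by exactly $2$, the equality $\SL(S(T)) = \SL(S(T'))$ (which follows from transverse isotopy, as $\SL$ is a transverse invariant) gives $\SL(T) - 2 = \SL(T') - 2$, hence $\SL(T) = \SL(T')$. So the content is in $(\ref{flype1}) \Rightarrow (\ref{flype2})$.

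For $(\ref{flype1}) \Rightarrow (\ref{flype2})$, I would argue by induction on the length $k$ of the chain $T_0 = T, T_1, \ldots, T_k = T'$ in Definition~\ref{def:flype-eq}. Since $S$ is a well-defined operation on transverse links, it suffices to treat a single step: if $T_{i-1}$ and $T_i$ are represented by braids differing by a flype, then $S(T_{i-1})$ and $S(T_i)$ are transversely isotopic. If the flype is positive, then by the discussion following \eqref{eq:flype-moves} the braids are already related by conjugation and positive braid (de)stabilization, so $T_{i-1}$ and $T_i$ are themselves transversely isotopic by the transverse Markov theorem, and applying $S$ preserves this. If the flype is negative, then I invoke exactly the sequence of braid moves in \eqref{eq:flype-moves}: the only non-transverse moves appearing there are one negative braid stabilization and one negative braid destabilization, and everything else (Reidemeister~II, the braid relation, conjugation) is a transverse Markov move. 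The key observation — already noted in the sentence preceding the Proposition — is that a negative braid stabilization followed later by a negative braid destabilization, with only transverse moves in between, is precisely the statement that after performing a single transverse stabilization $S$ on the input, one can cancel it against the later destabilization (after commuting $S$ past the intervening transverse moves), leaving the two ends related purely by transverse Markov moves. Hence $S(T_{i-1})$ and $S(T_i)$ are transversely isotopic.

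The main obstacle — really a bookkeeping point rather than a deep one — is making precise the claim that the negative stabilization and negative destabilization in \eqref{eq:flype-moves} "cancel" after applying $S$. One must check that the braid on which $S$ is performed, and the braid from which the destabilization removes a crossing, are related by the intervening transverse Markov moves in a way compatible with the stabilization strands; concretely, one tracks the stabilizing strand $\sigma_{m+1}^{-1}$ through lines two through seven of \eqref{eq:flype-moves} and verifies it is exactly the strand removed by the destabilization in line seven, so that $S$ applied to $A\sigma_m^k B\sigma_m^{-1}$ (before the destabilization is undone) yields a braid transversely Markov-equivalent to $A\sigma_m^{-1}B\sigma_m^k$ with one extra positive stabilization, i.e.\ to $S(A\sigma_m^{-1}B\sigma_m^k)$. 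This is the same computation Birman--Menasco carry out in \cite[Figure~5]{BirmanMenasco-MTWSI}, now read with the transverse Markov theorem in hand, and it requires no new ideas.
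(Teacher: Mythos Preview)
Your proposal is correct and follows the same approach as the paper, which proves this proposition in the sentence immediately preceding it: the sequence~\eqref{eq:flype-moves} exhibits a negative flype as a single negative braid stabilization, then transverse Markov moves, then a single negative braid destabilization, so the braids appearing strictly between the stabilization and destabilization already represent $S(T)$ and $S(T')$ and are transversely isotopic. Your final ``main obstacle'' paragraph overcomplicates this slightly---there is no need to ``commute $S$ past the intervening transverse moves'' or track the strand $\sigma_{m+1}^{-1}$; one simply observes that the braid in line~3 of~\eqref{eq:flype-moves} represents $S(T)$ (being a negative stabilization of a braid transversely equivalent to the starting braid), the braid in line~6 represents $S(T')$ (for the analogous reason on the other end), and lines~3--6 are transverse Markov moves. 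Also, ``one extra positive stabilization'' in your last sentence should read ``negative'' (i.e., transverse) stabilization, as your subsequent ``i.e.'' makes clear.
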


It is known that (\ref{flype3}) does not necessarily imply (\ref{flype2}): by \cite{EtnyreLafountainTosun}, there are transverse knots representing certain
cables of torus knots that have the same self-linking number but
require an arbitrarily large number of stabilizations to become
transversely isotopic. We do not know if (\ref{flype2}) necessarily implies
(\ref{flype1}), although it seems unlikely.

\subsection{Transverse and Legendrian links}\label{ssec:SZ}

Another approach to transverse links is through Legendrian links. Any
(oriented) Legendrian link can be $C^0$ perturbed to a well-defined
transverse link, its \emph{(positive) transverse pushoff}, and
Legendrian links that are Legendrian isotopic have transverse pushoffs
that are transversely isotopic. Conversely, any transverse link can be
$C^0$ perturbed to a Legendrian link called a \emph{Legendrian
  approximation}, though the Legendrian approximation is only
well-defined up to negative Legendrian stabilizations. (Recall that
there are two stabilization operations on Legendrian links, positive
and negative Legendrian stabilization $L \mapsto S_\pm(L)$, that are
well-defined on
Legendrian isotopy classes: in the front projection to the $xz$ plane,
each stabilization replaces a piece of the Legendrian link by a
two-cusped zigzag, where the cusps are oriented downwards
(respectively, upwards) for positive (respectively, negative) stabilization.)

Thus there is a many-to-one correspondence between Legendrian links
and transverse links, under which transverse links up to transverse
isotopy correspond to Legendrian links up to Legendrian isotopy and
negative Legendrian stabilization and destabilization. If $L$ is a
Legendrian link and $T$ is its transverse pushoff, then the
classical invariants of $L$ and $T$, the Thurston--Bennequin and
rotation numbers $\TB(L),\rot(L)$ and the self-linking number $\SL(T)$,
are related by $\SL(T) = \TB(L) - \rot(L)$. Furthermore, the
transverse stabilization
$S(T)$ is the transverse pushoff of $S_+(L)$.

\begin{figure}
  \centering
  \begin{overpic}[tics=10]{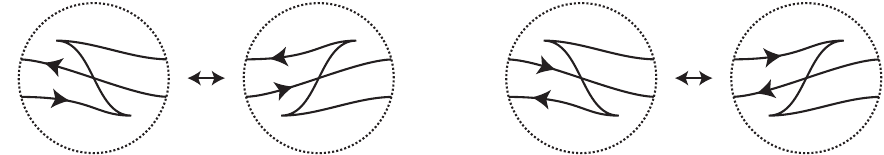}
    \put(21,-1) {(a)}
    \put(77,-1) {(b)}
  \end{overpic}
  \caption{An $\SZ_-$ move (left) and an $\SZ_+$ move (right) on
    Legendrian links.}
  \label{fig:sz}
\end{figure}

\begin{definition}\label{def:sz-eq}
Two Legendrian links $L,L'$ are \emph{$\SZ$-equivalent} if there are
Legendrian links $L_0=L,L_1,\ldots,L_k=L'$ such that for each
$i=1,\ldots,k$, $L_{i-1}$ and $L_i$ are Legendrian isotopic to
Legendrian links whose $xz$ projections are identical except for one
of the tangle replacements shown in Figure~\ref{fig:sz}. Similarly
define \emph{$\SZ_-$-equivalent} (respectively, \emph{$\SZ_+$-equivalent}) if we
restrict to only the tangle replacement on the left (respectively, right) of
Figure~\ref{fig:sz}.

Two transverse links are \emph{$\SZ$-equivalent} if they are the
transverse pushoffs of
$\SZ$-equivalent Legendrian links.
\end{definition}

Note that $\SZ$-equivalent Legendrian links are topologically isotopic and have the same Thurston--Bennequin and rotation numbers. (One can generalize the move shown in Figure~\ref{fig:sz} to arbitrary orientations, in which case $\TB$ is preserved but $\rot$ is not necessarily preserved.)
Indeed, the tangle replacements in Figure~\ref{fig:sz}, which
have previously appeared in the literature (e.g., \cite{EtnyreNg-problems})
though without the name ``$\SZ$ moves'',
are a key tool
in constructing Legendrian links with the same topological type and
classical invariants that are not necessarily Legendrian isotopic.
The Chekanov $5_2$ knots \cite{Chekanov-DGA} are $\SZ$-equivalent;
$\SZ$ moves also appear in some guise in
\cite{EpsteinFuchsMeyer,NOT-hf-transverse,EtnyreNgVertesi}, among
other papers.

Accordingly, $\SZ$-equivalent transverse links are topologically
isotopic and have the same self-linking number, and so they provide
good candidates for possibly distinct transverse links that share the same
classical invariants.
An observation of \cite{EpsteinFuchsMeyer}, in our language, states
that if $L,L'$ are $\SZ_+$-equivalent (respectively,
$\SZ_-$-equivalent), then $S_+(L)$ and $S_+(L')$ (respectively,
$S_-(L)$ and $S_-(L')$) are Legendrian isotopic. Thus we
have the following result.

\begin{proposition}
Let $L,L'$ be Legendrian links, with transverse pushoffs $T,T'$. If $L,L'$ are $\SZ_-$-equivalent, then $T,T'$ are transversely isotopic. Furthermore, the following properties satisfy $(\ref{sz1}) \Rightarrow (\ref{sz2}) \Rightarrow (\ref{sz3}) \Rightarrow (\ref{sz4})$:
\begin{enumerate}
\item\label{sz1}
$L,L'$ are $\SZ$-equivalent;
\item\label{sz2}
$T,T'$ are $\SZ$-equivalent;
\item\label{sz3}
$S(T),S(T')$ are transversely isotopic (recall $S$ denotes transverse stabilization);
\item\label{sz4}
$\SL(T) = \SL(T')$ and $T,T'$ are topologically isotopic.
\end{enumerate}
\end{proposition}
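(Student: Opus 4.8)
The plan is to derive everything from the Epstein--Fuchs--Meyer observation quoted above, together with two standard facts recorded in Section~\ref{sec:contact}: first, that the transverse pushoff $P(L)$ of a Legendrian link $L$ is unchanged, up to transverse isotopy, by negative Legendrian stabilization, i.e.\ $P(L)$ and $P(S_-(L))$ are transversely isotopic --- this is precisely the statement that transverse links up to transverse isotopy correspond to Legendrian links up to Legendrian isotopy and negative (de)stabilization --- and second, that transverse stabilization $S$ is a well-defined operation on transverse isotopy classes with $S(P(L))$ transversely isotopic to $P(S_+(L))$. Whenever an $\SZ$-chain $L=L_0,L_1,\dots,L_k=L'$ is under discussion I write $T_i=P(L_i)$, so that $T_0=T$ and $T_k=T'$, and I use $\simeq$ for transverse isotopy.

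For the opening assertion it suffices, by concatenation, to treat a single $\SZ_-$ step: suppose $L_{i-1}$ and $L_i$ are Legendrian isotopic to links differing by the left tangle replacement of Figure~\ref{fig:sz}. By Epstein--Fuchs--Meyer, $S_-(L_{i-1})$ and $S_-(L_i)$ are then Legendrian isotopic, hence have transversely isotopic pushoffs; since $P(L_{i-1})\simeq P(S_-(L_{i-1}))$ and $P(L_i)\simeq P(S_-(L_i))$, we conclude $T_{i-1}\simeq T_i$. Running this along an $\SZ_-$-equivalence shows that $\SZ_-$-equivalent Legendrian links have transversely isotopic pushoffs.

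The implication $(\ref{sz1})\Rightarrow(\ref{sz2})$ is immediate from Definition~\ref{def:sz-eq}, since by hypothesis $T$ and $T'$ are the transverse pushoffs of the $\SZ$-equivalent Legendrian links $L$ and $L'$. For $(\ref{sz2})\Rightarrow(\ref{sz3})$, fix an $\SZ$-equivalence realizing $(\ref{sz2})$; it suffices to show $S(T_{i-1})\simeq S(T_i)$ for each step, whence $S(T)\simeq S(T')$. If the $i$-th step is an $\SZ_+$ move, Epstein--Fuchs--Meyer gives that $S_+(L_{i-1})$ and $S_+(L_i)$ are Legendrian isotopic, so $S(T_{i-1})\simeq P(S_+(L_{i-1}))\simeq P(S_+(L_i))\simeq S(T_i)$. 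If it is an $\SZ_-$ move, the opening assertion applied to $L_{i-1},L_i$ gives $T_{i-1}\simeq T_i$, and then $S(T_{i-1})\simeq S(T_i)$ because $S$ is well-defined on transverse isotopy classes. Finally, $(\ref{sz3})\Rightarrow(\ref{sz4})$ is exactly the implication $(\ref{flype2})\Rightarrow(\ref{flype3})$ of Proposition~\ref{prop:transverse-equivalence-list}: a transverse stabilization preserves topological type and decreases $\SL$ by $2$, so $S(T)\simeq S(T')$ forces $T$ and $T'$ to be topologically isotopic with $\SL(T)=\SL(T')$.

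The step needing genuine attention is $(\ref{sz2})\Rightarrow(\ref{sz3})$, because a general $\SZ$-equivalence interleaves $\SZ_+$ and $\SZ_-$ moves while the Epstein--Fuchs--Meyer statement stabilizes with the matching sign, so one cannot simply apply it to $S_+(L_i)$ throughout. The remedy, as above, is to process the $\SZ_-$ steps through the opening assertion; equivalently, one may use that $S_+$ and $S_-$ commute up to Legendrian isotopy and that $P$ is insensitive to $S_-$, giving $P(S_+(L_{i-1}))\simeq P(S_+(S_-(L_{i-1})))\simeq P(S_+(S_-(L_i)))\simeq P(S_+(L_i))$ for an $\SZ_-$ step. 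All the remaining ingredients --- the invariance of the pushoff under Legendrian isotopy, the effect of transverse stabilization on $\SL$ and on topological type, and the well-definedness of $S$ --- are already in Section~\ref{sec:contact}, so nothing new beyond the cited observation is required; I would only take care to keep the $\SZ_\pm$ conventions aligned with Definition~\ref{def:sz-eq} and Figure~\ref{fig:sz}.
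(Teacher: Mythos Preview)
Your proof is correct and follows exactly the route the paper indicates: the paper does not write out a proof for this proposition but simply says ``Thus we have the following result'' after recording the Epstein--Fuchs--Meyer observation and the standard facts about pushoffs and stabilizations, and you have faithfully unpacked that ``Thus.'' The only cosmetic point is that in the $(\ref{sz2})\Rightarrow(\ref{sz3})$ step the $\SZ$-chain realizing (\ref{sz2}) need not literally begin at $L$ and end at $L'$ (only at Legendrians with pushoffs transversely isotopic to $T,T'$), but your argument goes through unchanged with that reading.
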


\begin{remark}
  Note that a \textit{negative} stabilization of a braid results in a
  transverse stabilization of the corresponding transverse link, while
  a \textit{positive} stabilization of a Legendrian link produces
  a transverse stabilization for the transverse pushoff.
\end{remark}

\subsection{Relation between equivalences}

We have discussed two techniques to produce candidates for transverse
links that are topologically equivalent and have the same self-linking
number but may not be transversely isotopic: negative flypes for
braids (Section~\ref{ssec:flype}) and $\SZ_+$ moves for Legendrian
links (Section~\ref{ssec:SZ}). Here we show that these two techniques
are identical.

\begin{proposition}\label{prop:flype-SZ}
Transverse links are flype-equivalent if and only if they are $\SZ$-equivalent.
\end{proposition}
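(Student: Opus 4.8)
The plan is to prove the two implications separately by translating each move into the other's language, using the braid$\leftrightarrow$Legendrian dictionary from Section~\ref{ssec:SZ}.

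For the direction ``$\SZ$-equivalent $\Rightarrow$ flype-equivalent'', I would start with two Legendrian links $L, L'$ whose fronts differ by a single $\SZ_+$ move (the $\SZ_-$ case already gives transversely isotopic pushoffs by the preceding proposition, hence is subsumed), and compute their transverse pushoffs as braid closures. The first step is to put the local tangle picture of Figure~\ref{fig:sz}(b) into braid position: a front near the $\SZ$ region determines a small braid piece, and the two sides of the move differ by moving a crossing past the zigzag/cusp region. I would then identify this local change, after taking the transverse pushoff, with the effect of a negative flype on the corresponding braid word — i.e.\ show that the braids obtained are of the form $A\sigma_m^{k}B\sigma_m^{-1}$ and $A\sigma_m^{-1}B\sigma_m^{k}$ (possibly after conjugation and positive (de)stabilization). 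The bookkeeping of how cusps in the front translate to $\sigma_m^{\pm1}$ factors in the braid, together with the Thurston--Bennequin/self-linking normalization $\SL(T)=\TB(L)-\rot(L)=w-m$, is what pins down the exponents.

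For the converse ``flype-equivalent $\Rightarrow$ $\SZ$-equivalent'', I would take braids related by a negative flype, $A\sigma_m^{k}B\sigma_m^{-1}$ and $A\sigma_m^{-1}B\sigma_m^{k}$, pass to Legendrian approximations of their braid closures, and run essentially the reverse identification: the $\sigma_m^{-1}$ factor becomes a negative cusp region in the front, and sliding the block of $\sigma_m^k$ crossings from one side of it to the other is exactly an $\SZ_+$ move up to Legendrian isotopy. Here I can lean on the explicit sequence~\eqref{eq:flype-moves}, which already exhibits the negative flype as conjugation plus one negative (de)stabilization pair; on the Legendrian side negative braid (de)stabilization corresponds to a move internal to an $\SZ$-equivalence class, so it suffices to match the ``middle'' Reidemeister-III/braid-relation steps of~\eqref{eq:flype-moves} with the local picture in Figure~\ref{fig:sz}.

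The main obstacle, and where I would spend the most care, is the precise normalization in the first direction: making the local front-to-braid translation rigorous so that the $\SZ_+$ tangle replacement produces exactly a $\sigma_m^{-1}$ (and not, say, a configuration needing an extra stabilization), and checking that the ambient isotopy used to bring the front into braid form does not secretly change the transverse type. Once the local model is nailed down, both implications reduce to the combinatorics of braid words modulo conjugation and positive (de)stabilization, which is routine. I would organize the argument around a single figure showing the $\SZ_+$ tangle, its Legendrian-approximation-to-braid conversion, and the resulting $\sigma_m^{-1}$/$\sigma_m^k$ blocks, from which both directions can be read off.
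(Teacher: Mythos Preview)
Your overall strategy---translate each move into the other language---matches the paper, but there is a real gap in your flype $\Rightarrow$ $\SZ$ direction, and you are missing the paper's key technical tool in both directions.

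For flype $\Rightarrow$ $\SZ$, you want to lean on the decomposition~\eqref{eq:flype-moves} and the claim that ``on the Legendrian side negative braid (de)stabilization corresponds to a move internal to an $\SZ$-equivalence class.'' That claim is not correct. A negative braid stabilization is a \emph{transverse} stabilization, which on the Legendrian side is a \emph{positive} Legendrian stabilization $S_+$; but $S_+$ is not an $\SZ$ move, and $\SZ$-equivalence is not in general preserved under doing and undoing an $S_+$. (The Epstein--Fuchs--Meyer observation goes the other way: $S_+$ of two $\SZ_+$-related Legendrians are isotopic, not that $S_+$ itself is an $\SZ$ move.) So the middle of~\eqref{eq:flype-moves} cannot simply be ``absorbed'' into $\SZ$-equivalence, and your plan as written does not produce $\SZ$-related Legendrians. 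The paper does not use~\eqref{eq:flype-moves} for this direction at all; instead it builds explicit Legendrian representatives of both braid closures and checks directly that their fronts differ by an $\SZ_+$ move plus Legendrian isotopy.

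The ingredient you are missing, and which the paper uses for both directions, is \emph{grid diagrams} as an explicit bridge between the Legendrian and braid pictures (via the construction $G\mapsto B_\uparrow(G)$ of \cite{NgThurston}). For $\SZ\Rightarrow$ flype, the paper first isotopes the $\SZ_+$ fronts into an ``improved'' local form, reads them as grid diagrams, and applies $B_\uparrow$ to get braids of the form $A\sigma_m^2 B\sigma_m^{-1}$ and $\sigma_m A\sigma_m^{-1}B\sigma_m$, which differ by a negative flype after conjugation. For flype $\Rightarrow$ $\SZ$, it reverses this: draw the two flyped braids, undo $B_\uparrow$ to get grid diagrams, and observe that the associated Legendrian fronts are related by an $\SZ$ move and Legendrian isotopy. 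Your instinct that the hard part is making the front-to-braid translation rigorous is exactly right; grid diagrams are precisely the device that does this, and without them (or an equivalent explicit mechanism) neither direction of your plan can be completed.
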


Note that since positive flypes and $\SZ_-$ moves do not change
transverse type, one can replace ``$\SZ$-equivalent'' by ``$\SZ_+$-equivalent'', and similarly restrict to negative flypes, in
the statement of Proposition~\ref{prop:flype-SZ}.

\begin{figure}
  \centering
  \includegraphics[height=1.5in,width=5in]{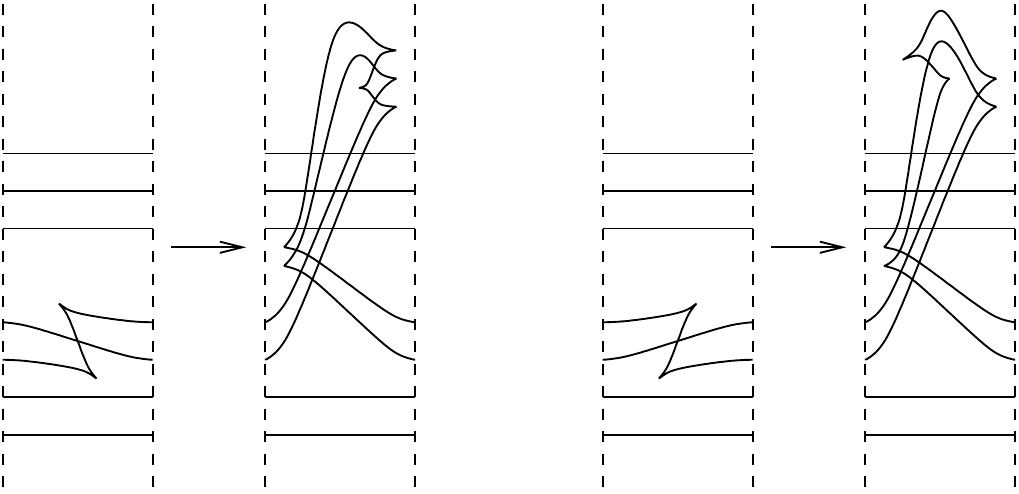}
  \caption{\textbf{Changing two fronts related by an $\SZ_+$ move.}
    Each is changed by a Legendrian isotopy.}
  \label{fig:SZdip}
\end{figure}

\begin{figure}
  \centering
  \includegraphics[height=1in]{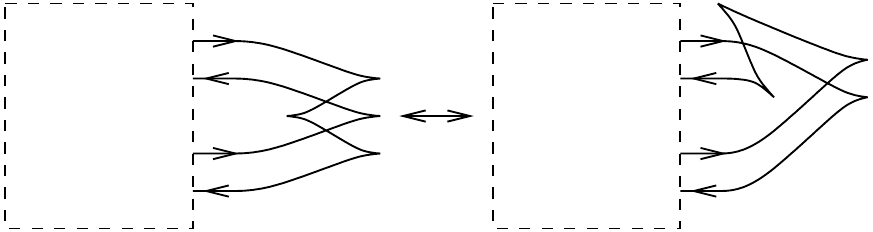}
  \caption{\textbf{Fronts related by an improved $\SZ_+$
      move.} The diagrams are assumed to coincide inside the
      dashed boxes.}
  \label{fig:SZimproved}
\end{figure}

\begin{proof}[Proof of Proposition~\ref{prop:flype-SZ}]
We use grid diagrams as an intermediary between braids, Legendrian
links, and transverse links; see \cite{NgThurston} for the necessary
background. For our purposes, a grid diagram is a link diagram
consisting entirely of non-collinear horizontal and vertical line segments, with
vertical segments crossing over horizontal segments wherever they
intersect.

\begin{figure}
  \centering
  \includegraphics[width=5in]{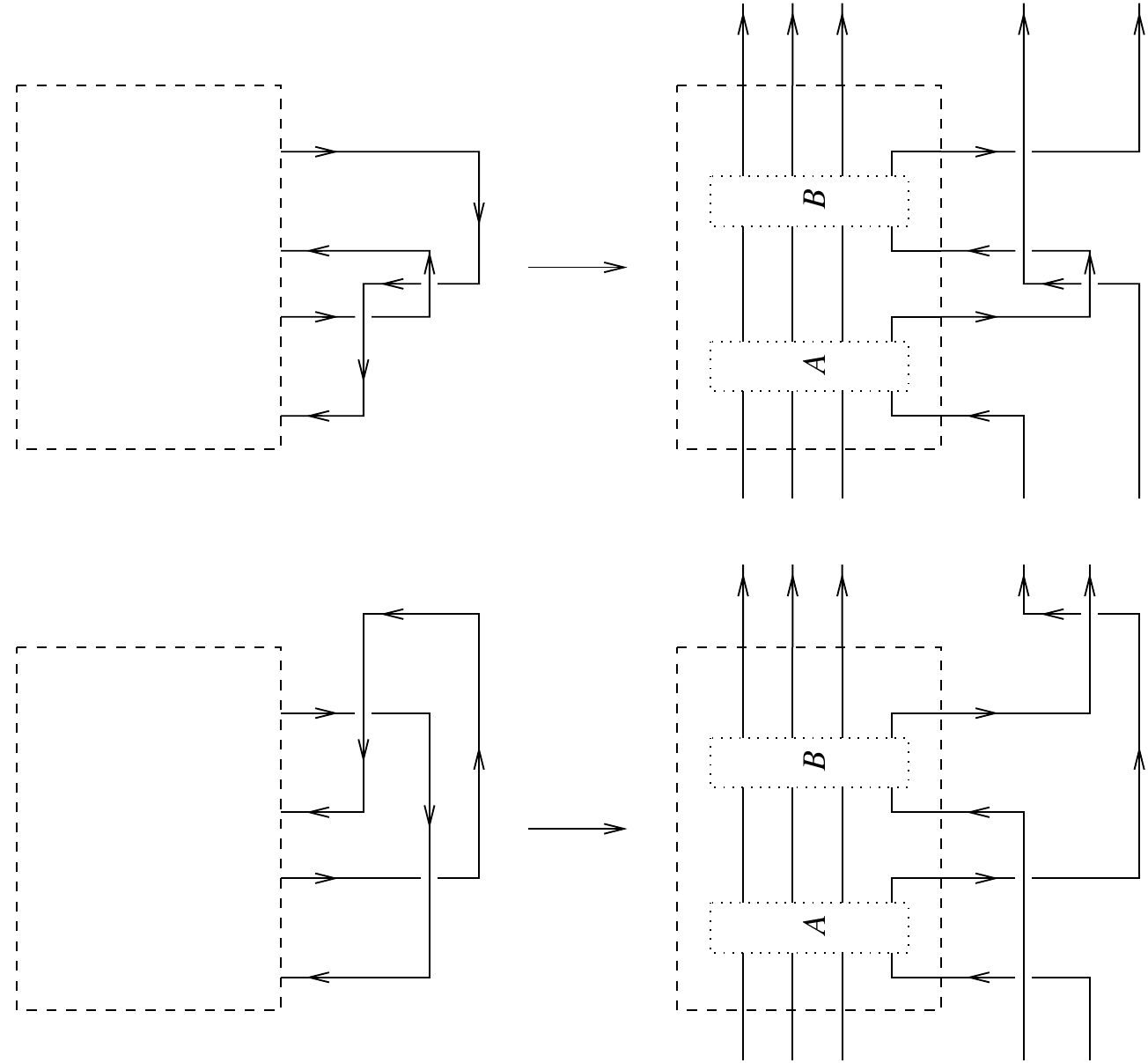}
  \caption{\textbf{Grid and braid diagrams for an improved $\SZ_+$
      move.} Left: grid diagrams related by an improved $\SZ_+$ move;
    right:  the braids obtained from these diagrams.}
  \label{fig:SZtoflype}
\end{figure}

First suppose that two transverse links are the pushoffs
of Legendrian links whose front projections are related by an $\SZ_+$ move.
By applying the Legendrian isotopies shown in Figure~\ref{fig:SZdip},
we may assume that the fronts of the Legendrian links are related by
the ``improved'' $\SZ_+$ move shown in
Figure~\ref{fig:SZimproved}. Now represent these fronts by grid
diagrams as in the left-hand diagrams in
Figure~\ref{fig:SZtoflype}, which agree within the dashed boxes. (Here
we use that grid diagrams can be
viewed as fronts for Legendrian links by rotating them $45^\circ$
counterclockwise and smoothing corners.)

Given a grid diagram $G$ representing a Legendrian link $L$, we can
produce a braid representing the transverse pushoff of $L$, as
follows. Replace any vertical segment oriented downwards (from point $p$ to
point $q$, say) by two half-infinite vertical segments, one pointing
upwards from $p$, and the other pointing upwards to $q$, and again
impose the condition that vertical segments pass over horizontal
segments. The result is the braid, read bottom to top; in the language
of \cite{NgThurston}, this is $B_{\uparrow}(G)$. When we apply this
procedure to the grid diagrams on the left of
Figure~\ref{fig:SZtoflype}, we obtain the braids on the right of
Figure~\ref{fig:SZtoflype}. (Strictly speaking, some of the vertical
segments in the right diagrams should be perturbed slightly to avoid
collinearity.) These braids are of the form
$A\sigma_m^2 B\sigma_m^{-1}$ and $\sigma_m A \sigma_m^{-1} B
\sigma_m$ for some $A,B\in B_m$, and are thus related by a negative
flype (along with conjugation).

\begin{figure}
  \centering
  \includegraphics[width=6in]{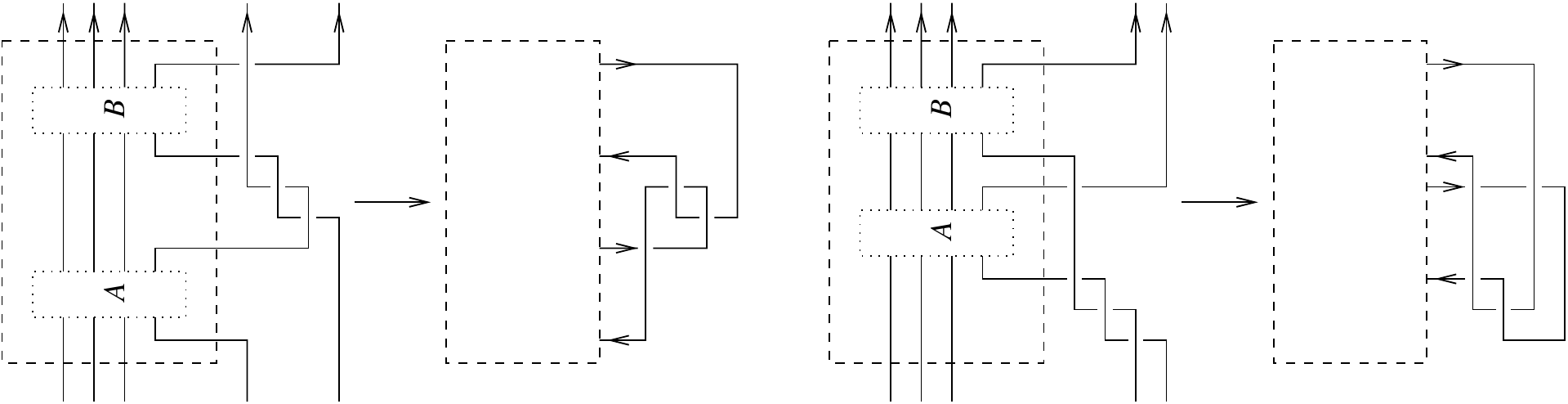}
  \caption{\textbf{Braid and grid diagrams for a negative flype.}
As usual, the diagrams coincide within corresponding dashed boxes.}
  \label{fig:flypetoSZ}
\end{figure}

\begin{figure}
  \centering
  \includegraphics[width=6in]{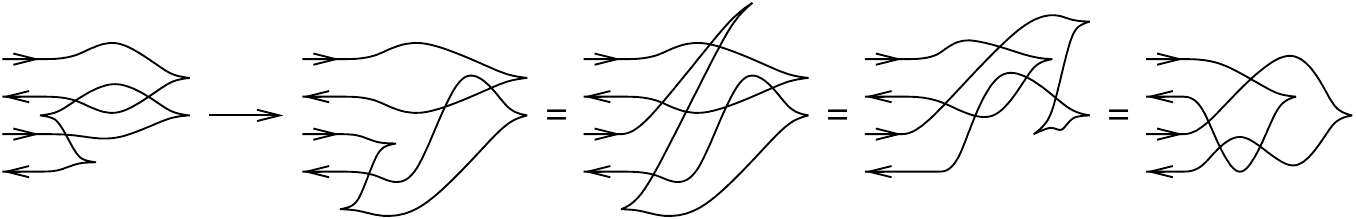}
  \caption{\textbf{An $\SZ_-$ move followed by Legendrian isotopy.}}
  \label{fig:Legisotopy}
\end{figure}

Conversely, suppose that two transverse links are represented by
braids differing by a negative flype $A\sigma_m^{k}B\sigma_m^{-1}
\longleftrightarrow A\sigma_m^{-1}B\sigma_m^{k}$; in the figures, we
assume that $k=3$, but the case of general $k$ is nearly
identical. Draw the braids as in Figure~\ref{fig:flypetoSZ}. Reversing
the procedure from above yields the grid diagrams shown in
Figure~\ref{fig:flypetoSZ}. These grid diagrams, in turn, correspond to
Legendrian links that are related by an $\SZ_+$ move and
Legendrian isotopy. See Figure~\ref{fig:Legisotopy}.
\end{proof}

\section{Some constructions from Khovanov homology}\label{sec:def}

In this section, we collect some standard constructions from Khovanov
homology. Specifically, in Section~\ref{subsec:bar-natan} we recall
the Bar-Natan deformation of the Khovanov complex, and in
Section~\ref{sec:R-maps} we recall the homotopy equivalences of
Bar-Natan complexes induced by Reidemeister moves.

\subsection{The Bar-Natan deformation of Khovanov homology}\label{subsec:bar-natan}
The Bar-Natan deformation of the Khovanov complex
(from~\cite{Bar-kh-tangle-cob}) comes from a $2$-dimensional Frobenius
algebra $V=\ZZ\langle x_-,x_+\rangle$, with multiplication $m$ given
by
\[
x_+\otimes x_+\mapsto x_+ \qquad x_+\otimes x_-\mapsto x_-\qquad
x_-\otimes x_+\mapsto x_- \qquad x_-\otimes x_-\mapsto \Red{x_-},
\]
and comultiplication $\Delta$ by
\[
x_-\mapsto x_-\otimes x_-\qquad x_+\mapsto x_+\otimes x_-+x_-\otimes
x_+\Red{-x_+\otimes x_+}.
\]
Without the terms in \Red{red} this is $H^*(S^2)$, which underlies
Khovanov homology. It is also useful to introduce the variable $\xbar=
x_- - x_+$;\footnote{This definition of $\xbar$ is the negative of the
  one used in~\cite{RS-s}.} with respect to the basis $\{x_-,\xbar\}$ the
multiplication and comultiplication diagonalize to
\begin{equation}\label{eq:Lee-mults}
  \begin{split}
    \xbar\otimes \xbar\stackrel{m}{\longrightarrow}-\xbar\qquad
    \xbar\otimes x_-\stackrel{m}{\longrightarrow}0&\qquad
    x_-\otimes \xbar\stackrel{m}{\longrightarrow}0\qquad
    x_-\otimes x_-\stackrel{m}{\longrightarrow} x_-\\
    \xbar\stackrel{\Delta}{\longrightarrow} \xbar\otimes \xbar&\qquad
    x_-\stackrel{\Delta}{\longrightarrow} x_-\otimes x_-.
  \end{split}
\end{equation}

\begin{figure}
  \centering
  \includegraphics{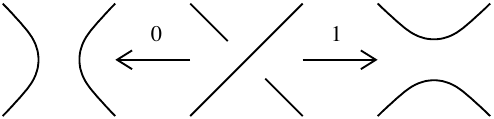}
  \caption{\textbf{Resolutions of a crossing.}}
  \label{fig:resolve-crossing}
\end{figure}

Let $K$ be an oriented link diagram with $n$ crossings. Ordering the
crossings of $K$ induces an identification of the set of complete
resolutions of $K$ with $\{0,1\}^n$. (Our convention for which is the
$0$-resolution and which is the $1$-resolution is given in
Figure~\ref{fig:resolve-crossing}.) Let $\BNcx(K)$ be the chain
complex generated by pairs $(v,x)$ where $v\in\{0,1\}^n$ and $x$ is a
labeling of each circle in the complete resolution $K_v$ corresponding
to $v$ by $x_+$ or $x_-$. (That is, as a vector space, $\BNcx(K)$ is
isomorphic to the Khovanov complex $\KhCx(K)$.) The differential is defined
exactly as for the Khovanov complex, except that we use Bar-Natan's
Frobenius algebra (above) in place of Khovanov's Frobenius algebra.

Gradings will be of some importance. Let $n_-$ (respectively, $n_+$) be
the number of negative (respectively, positive) crossings in $K$.
Given $v\in\{0,1\}^n$ let $|v|=\sum v_i$ denote the weight of $v$. Then
the \emph{homological grading} on $\BNcx(K)$ (or $\KhCx(K)$) is given by
\begin{equation}\label{eq:h-gr}
\gr_h(v,x)=-n_-+|v|.
\end{equation}
The differential on $\BNcx(K)$ (obviously) increases $\gr_h$ by $1$.

The quantum grading on the Khovanov complex becomes a \emph{quantum
  filtration} (or \emph{$q$-filtration}) on the complex
$\BNcx(K)$; it is given by
\begin{equation}\label{eq:q-gr}
\gr_q(v,x)=n_+-2n_-+|v|+\#\{Z\in K_v\mid x(Z)=x_+\} - \#\{Z\in K_v\mid x(Z)=x_-\}.
\end{equation}
That is, up to the normalization $n_+-2n_-$, $\gr_q(v,x)$ is given by
the weight of $v$ plus the number of $x_+$'s in $x$ minus the number
of $x_-$'s in $x$. The differential on $\BNcx(K)$ satisfies
\[
\gr_q(\diff(v,x))\geq \gr_q(v,x).
\]
Let $\Filt_m\BNcx(K)$ denote the part of $\BNcx(K)$ in filtration $\geq m$, i.e.,
\[
\Filt_m\BNcx(K)=\Span\{(v,x)\mid \gr_q(v,x)\geq m\}.
\]
Let $\BNcx^n(K)$ be the part of $\BNcx(K)$ in homological grading $n$, i.e., 
\[
\BNcx^n(K)=\Span\{(v,x)\mid \gr_h(v,x)=n\}.
\]

\begin{figure}
  \centering
  \begin{overpic}[tics=10]{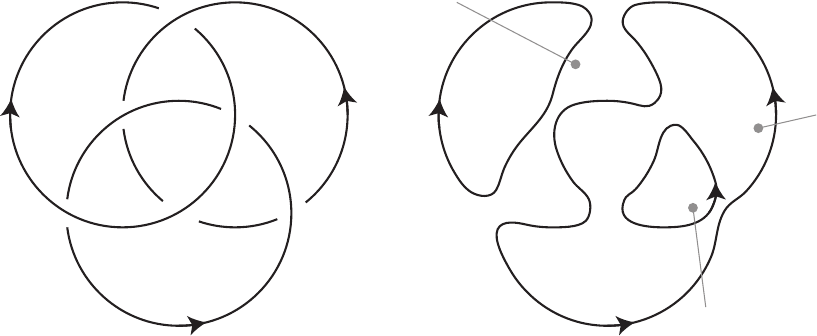}
    \put(59,7){$\xbar$}
    \put(80,11){$x_-$}
    \put(56,32){$x_-$}
  \end{overpic}
  \caption{\textbf{Canonical generators of the Bar-Natan complex.}
    Left: an oriented link. Right: the corresponding oriented
    resolution with the associated Bar-Natan-Lee-Rasmussen-Turner
    generator. (The points $q_C$ and arcs $A$ are indicated in gray.)}
  \label{fig:BN-gens}
\end{figure}

Following Lee~\cite{Lee-kh-endomorphism} (cf.~\cite{Ras-kh-slice}),
Turner showed that the homology of $\BNcx(K)$ is
$(\ZZ\oplus\ZZ)^{\otimes|K|}$ (where $|K|$ denotes the number of components
of $K$)~\cite{Turner-kh-BNseq}. Moreover, the generators of
$H^*(\BNcx(K))$ correspond to orientations of $K$, as follows. Given
an orientation $o$ of $K$, there is a corresponding complete
resolution $K_o$ of $K$, the \emph{oriented resolution}. Each circle
$C$ in $K_o$ inherits an orientation from $K$. Fix a point $p_C$ on
$C$ and let $q_C$ be the result of pushing $p_C$ slightly to the left
of $C$ (with respect to the orientation of $C$). Let $A$ be an arc
from $q_C$ to $\infty$, transverse to all of the circles in $K_o$. If
$A$ crosses an even number of circles then label $C$ by $x_-$; if $A$
crosses an odd number of circles, label $C$ by $\xbar$. See
Figure~\ref{fig:BN-gens}.
This labeling is the cycle $\psi(o)$ of $\BNcx(K)$ corresponding
to $o$. Note that this depended on an arbitrary universal choice:
we could equally well exchange $x_-$ and $\xbar$ in the definition.

\begin{remark}\label{rem:lee-deformation}
  Lee~\cite{Lee-kh-endomorphism} considered another deformation of the
  Khovanov complex, corresponding to the Frobenius structure
  \[
  \begin{split}
  x_+\otimes x_+\stackrel{m}{\mapsto} x_+ \qquad x_+\otimes x_-\stackrel{m}{\mapsto} x_-&\qquad
  x_-\otimes x_+\stackrel{m}{\mapsto} x_- \qquad x_-\otimes x_-\stackrel{m}{\mapsto} \Red{x_+},
  \\
  x_-\stackrel{\Delta}{\mapsto} x_-\otimes x_-+\Red{x_+\otimes x_+}&\qquad x_+\stackrel{\Delta}{\mapsto} x_+\otimes x_-+x_-\otimes
  x_+.
  \end{split}
  \]

  Mackaay-Turner-Vaz showed that over a ring $R$ in which $2$ is
  invertible, the Lee deformation is twist-equivalent (in the sense
  of~\cite{Kho-kh-Frobenius}) to the
  Bar-Natan deformation~\cite{MTV-Kh-s-invts}. In particular, the results below apply to the
  Lee deformation as well, if we work over a ring in which $2$ is
  invertible. This gives no (obvious) additional transverse
  information, but can be useful for studying the Plamenevskaya
  invariant; see Proposition~\ref{prop:olgastabinv}.
\end{remark}

\subsection{The maps induced by Reidemeister moves}\label{sec:R-maps}

For us, a \emph{filtered chain complex with distinguished generators} is a chain
complex generated freely over $\Z$ by a generating set, where each
element of the generating set carries a homological grading $\gr_h$
and a filtration grading $\gr_q$, such that the differential $\diff$
increases the homological grading by $1$ and does not decrease the
filtration grading.
The Bar-Natan chain complex
$\BNcx(K)$ (see Section~\ref{subsec:bar-natan}) is an example of a
filtered chain complex with distinguished generators: the generators in which each circle is labeled by either $x_+$ or $x_-$.

The following is a standard cancellation lemma about filtered chain
complexes with distinguished generators. Since we make repeated use of
it for writing down the Reidemeister maps and for proving locality
properties, we give a proof of it.

\begin{lem}\label{lem:cancellation}
  Let $(C,\diff_C)$ be a filtered chain complex with distinguished
  generators. Let $\al$ and $\be$ be two of the generators such that
  $\Z\langle\al,\be\rangle$ is a subcomplex (respectively, a quotient
  complex) of $C$, 
  where the coefficient $\langle \delta_C\al,\be\rangle$ of $\beta$ in  $\delta_C(\alpha)$ is $\pm1$, and
  $\gr_q(\al)=\gr_q(\be)$. Let $(D,\diff_D)$ be the quotient complex
  (respectively, the subcomplex) of $C$ generated by the remaining
  generators, with $\delta_D$ given by applying $\delta_C$ and then setting to $0$ any generators not in $D$ (respectively restricting $\delta_C$ to $D$). Then the quotient map $f\from C\to D$ (respectively, the
  inclusion map $g\from D\to C$) induces a filtered chain homotopy
  equivalence; that is, there is a filtered chain map $g\from D\to C$
  (respectively, $f\from C\to D$) and filtered homotopies $h\from C\to
  C$, $k\from D\to D$, such that $h\circ \diff_C+\diff_C\circ
  h=\Id_C-g\circ f$ and $k\circ \diff_D+\diff_D\circ k=\Id_D-f\circ
  g$; and in fact one can take $k=0$.
\end{lem}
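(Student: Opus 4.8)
To prove Lemma~\ref{lem:cancellation} the plan is to run the standard cancellation (Gaussian elimination) argument while keeping track of the $q$-filtration, packaging everything through a single explicit chain homotopy. I will describe the subcomplex case; the quotient case is entirely parallel (one either repeats the argument with arrows reversed or dualizes), and I will indicate the one spot where it differs.

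First I would pin down the shape of $\diff_C$ near $\al$ and $\be$. Since $\langle\diff_C\al,\be\rangle=\pm1$ is nonzero and $\diff_C$ raises $\gr_h$ by $1$, we get $\gr_h(\be)=\gr_h(\al)+1$. Because $\Z\langle\al,\be\rangle$ is a subcomplex, $\diff_C\al$ and $\diff_C\be$ both lie in $\Z\langle\al,\be\rangle$, and the homological grading then forces $\diff_C\al=\ep\be$ for some $\ep=\pm1$ and $\diff_C\be=0$. In particular, since $\diff_C$ does not decrease $\gr_q$, we get $\gr_q(\be)=\gr_q(\diff_C\al)\ge\gr_q(\al)$, so the hypothesis $\gr_q(\al)=\gr_q(\be)$ is really the extra input $\gr_q(\al)\ge\gr_q(\be)$.

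Next I would define the homotopy $h\from C\to C$ by $h(\be)=\ep^{-1}\al$ and $h=0$ on every other distinguished generator, set $k=0$, and put $\pi=\Id_C-h\circ\diff_C-\diff_C\circ h$. One checks $\diff_C\pi=\pi\diff_C$ automatically (this holds for $\Id$ minus a commutator of this form), and evaluating $\pi$ on generators using $\diff_C\al=\ep\be$ and $\diff_C\be=0$ gives $\pi(\al)=\pi(\be)=0$ and $\pi(\ga)=\ga-\ep^{-1}\langle\diff_C\ga,\be\rangle\,\al$ for a remaining generator $\ga$. Thus $\pi$ kills $\Z\langle\al,\be\rangle$, hence factors uniquely as $\pi=g\circ f$ where $f\from C\to D$ is the quotient map and $g\from D\to C$ is induced; since $f$ is a surjective chain map and $\pi$ is a chain map, $g$ is automatically a chain map. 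By construction $\Id_C-g\circ f=h\circ\diff_C+\diff_C\circ h$, and evaluating on generators gives $f\circ g=\Id_D$, so $k\circ\diff_D+\diff_D\circ k=0=\Id_D-f\circ g$ as required. (In the quotient-complex case one instead lets $g\from D\to C$ be the inclusion of the subcomplex spanned by the remaining generators, checks by the same grading argument that $\diff_C\al=\ep\be+\de$ with $\de$ in that subcomplex and $\diff_C\be$ in it, and finds the same $\pi$ now has image in the subcomplex, giving $\pi=g\circ f$ with $f$ the induced retraction.)

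The step to be careful about — and the only place the hypothesis $\gr_q(\al)=\gr_q(\be)$ enters — is \emph{filteredness}. The map $h$ is filtered precisely because $\gr_q(h(\be))=\gr_q(\al)\ge\gr_q(\be)$; then $h\circ\diff_C$ and $\diff_C\circ h$ do not decrease $\gr_q$, and hence neither does $\pi=\Id_C-h\circ\diff_C-\diff_C\circ h$, using $\gr_q(a-b)\ge\min(\gr_q a,\gr_q b)$; $f$ is a projection onto a subset of the distinguished generators, so it is filtered; $g$ is just $\pi$ read off on the lifted generators, so it is filtered too; and $k=0$ is trivially filtered. The main obstacle here is thus not any single computation but simply the bookkeeping of checking that all of $f,g,h,k$ respect the filtration, which as indicated comes down entirely to the grading hypothesis on $\al$ and $\be$ together with the fact that $\diff_C$ does not decrease $\gr_q$.
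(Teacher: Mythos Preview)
Your proof is correct and is essentially the same as the paper's: the maps $h$, $k=0$, and the resulting $g$ (resp.\ $f$) that you derive via $\pi=\Id_C-h\diff_C-\diff_C h$ coincide exactly with the maps the paper writes down explicitly (after normalizing to $\ep=1$), and the filteredness check is the same. The only difference is presentational---you derive $g$ from $h$ rather than positing both and verifying the identities---but the content is identical.
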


\begin{proof}
  Assume, after replacing $\be$ by $-\be$ if
  necessary, that $\langle \diff_C \al,\be\rangle=1$.  In the first case,
  when $\Z\langle \al,\be\rangle$ is a subcomplex of $C$, define $g\from
  D\to C$ as
  \[
  g(x)=x-\langle \diff_C x,\be\rangle \al,
  \]
  and in the second case, when $\Z\langle \al,\be\rangle$ is a quotient complex
  of $C$, define $f\from C\to D$ as
  \[
  f(x)=
  \begin{cases}
    0&\text{if $x=\al$,}\\
    -\sum_y\langle\diff_C\al,y\rangle y&\text{if $x=\be$,}\\
    x&\text{otherwise.}
  \end{cases}
  \]
  In either case, define $k=0$ and $h\from C\to C$ as
  \[
  h(x)=
  \begin{cases}
    \al&\text{if $x=\be$,}\\
    0&\text{otherwise.}
  \end{cases}
  \]
  It is straightforward to verify that these maps have the desired properties.
\end{proof}

\begin{remark}
   Lemma~\ref{lem:cancellation} fails
  (even in the non-filtered case) if one does not work with distinguished
  generators: the two step chain complex
  $\Z\overset{2}{\rightarrow}\Z$ admits a quotient map to $\Z/2\Z$ which
  does not induce a chain homotopy equivalence.
\end{remark}

Now, let $K$ and $K'$ be two link diagrams representing isotopic
links. Then $K$ and $K'$ can be related by a sequence of Reidemeister
moves (Figure~\ref{fig:Reid-moves}); and a sequence of
Reidemeister moves connecting $K$ to $K'$ induces a filtered chain
homotopy equivalence between $\BNcx(K)$ and $\BNcx(K')$. We discuss
these filtered chain homotopy equivalences next.

\captionsetup[subfloat]{width=0.1\textwidth}
\begin{figure}
  \centering
  \subfloat[RI${}^-$.\label{subfig:r1n}]{
    \xymatrix@C=0.07\textwidth{
      \vcenter{\hbox{\includegraphics[height=0.3\textheight]{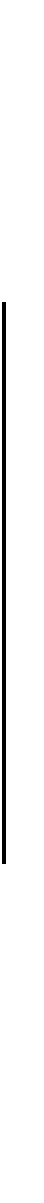}}}\ar@{->}[r]& \vcenter{\hbox{\includegraphics[height=0.3\textheight]{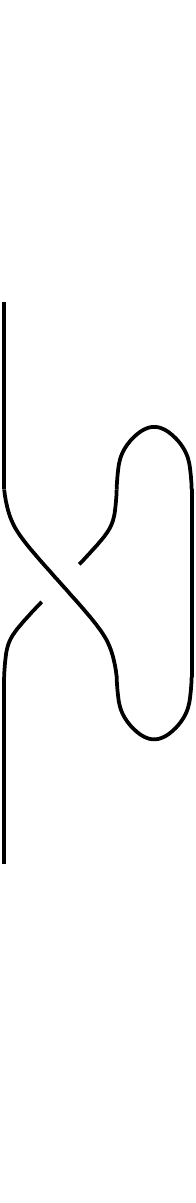}}}
    }
  }
  \hspace{0.03\textwidth}
  \subfloat[RI${}^+$.\label{subfig:r1p}]{
    \xymatrix@C=0.07\textwidth{
      \vcenter{\hbox{\includegraphics[height=0.3\textheight]{reider1b}}}\ar@{->}[r]& \vcenter{\hbox{\includegraphics[height=0.3\textheight]{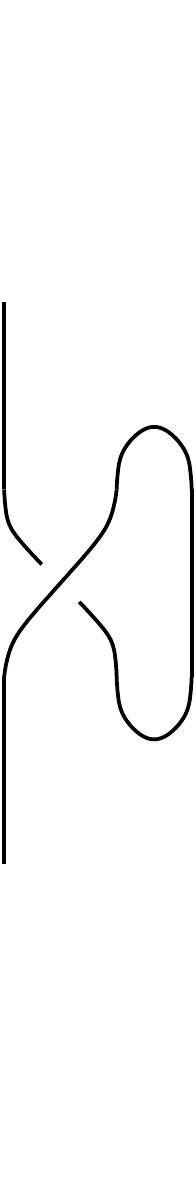}}}
    }
  }
  \hspace{0.03\textwidth}
  \subfloat[RII.\label{subfig:r2}]{
    \xymatrix@C=0.07\textwidth{
      \vcenter{\hbox{\includegraphics[height=0.3\textheight]{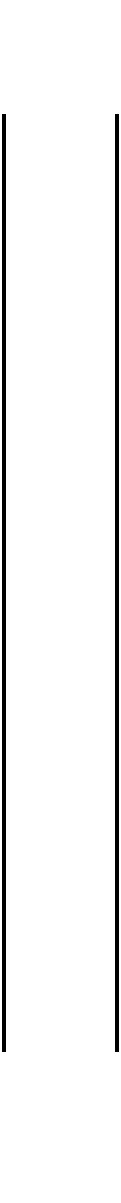}}}\ar@{->}[r]& \vcenter{\hbox{\includegraphics[height=0.3\textheight]{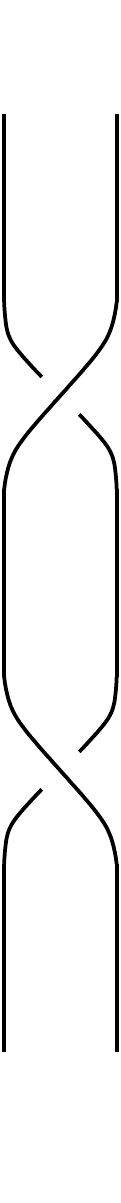}}}
    }
  }
  \hspace{0.03\textwidth}
  \subfloat[RIII.\label{subfig:r3}]{
    \xymatrix@C=0.07\textwidth{
      \vcenter{\hbox{\includegraphics[height=0.3\textheight]{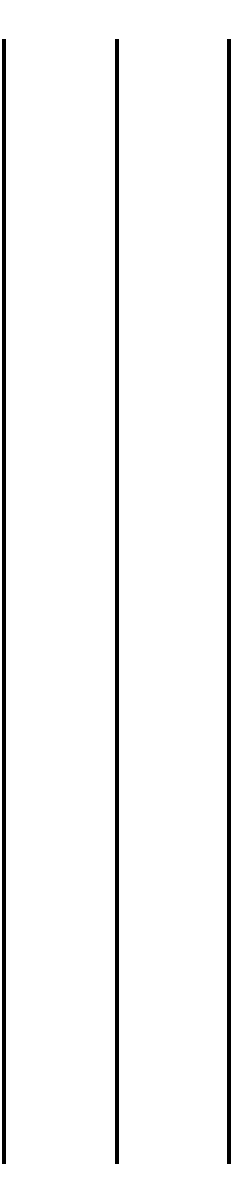}}}\ar@{->}[r]& \vcenter{\hbox{\includegraphics[height=0.3\textheight]{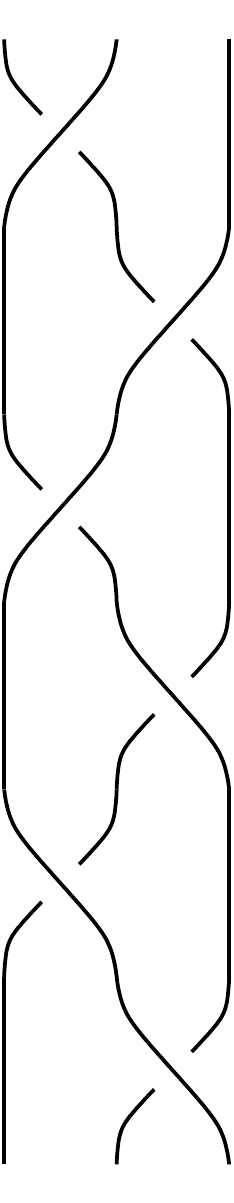}}}
    }
  }
  \caption{\textbf{The four Reidemeister moves.} (a) is the
    Reidemeister I negative stabilization, (b) is the Reidemeister I
    positive stabilization, (c) is the Reidemeister II move, and (d)
    is the braid-like Reidemeister III move. Among these four, (b),
    (c), and (d) may be viewed as the transverse Markov-Reidemeister
    moves.}\label{fig:Reid-moves}
\end{figure}

\subsubsection{Negative stabilization and destabilization}\label{subsec:r1n} Consider the
negative stabilization shown in Figure~\ref{subfig:r1n}. Let $c$
denote the new crossing in the stabilized diagram $K'$, and let
$(K')_0$
(respectively, $(K')_1$) denote the result of replacing $c$ by its
$0$-resolution (respectively, $1$-resolution). Then $(K')_0$ is
isomorphic to the unstabilized diagram $K$, while $(K')_1$ is
isomorphic to the disjoint union of $K$ and an unknot $U_0$. There is a
subcomplex $D$ of $\BNcx(K')$ spanned by the generators in
$\BNcx((K')_1)$ in which $U_0$ is labeled by $x_+$; $D$ can be
identified, preserving the bigrading, with $\BNcx(K)$ by forgetting
the component $U_0$. The map associated to stabilization is the
inclusion map
\[
\BNcx(K)\cong D\to \BNcx(K').
\]
The generators of $\BNcx(K')$ not in $D$ cancel in pairs, via the
arrows $x\mapsto x\otimes x_-$.
Since $D$ is obtained from $\BNcx(K')$ by cancellations of the form
described in Lemma~\ref{lem:cancellation}, the destabilization map is
given by its homotopy inverse.

\subsubsection{Positive stabilization and destabilization}\label{subsec:r1p}  Next, consider
the positive stabilization shown in Figure~\ref{subfig:r1p}. Once
again, let $c$ denote the new crossing in the stabilized diagram $K'$,
and let $(K')_0$ (respectively, $(K')_1$) denote the result of replacing $c$ by
its $0$-resolution (respectively, $1$-resolution). This time, $(K')_1$
is isomorphic to the unstabilized diagram $K$, while $(K')_0$ is
isomorphic to the disjoint union of $K$ and an unknot $U_0$. There is a
subcomplex $E$ of $\BNcx(K')$ spanned by the generators in
$\BNcx((K')_1)$ and the generators in $\BNcx((K')_0)$ in which $U_0$
is labeled by $x_+$; and $\BNcx(K')/E$ is identified with $\BNcx(K)$
by forgetting the component $U_0$. The map associated to
destabilization is the projection
\[
\BNcx(K')\to \BNcx(K')/E\cong\BNcx(K).
\]
Since $E$ can be contracted by cancellations of the form described in
Lemma~\ref{lem:cancellation}, the stabilization map is given by the
homotopy inverse to the destabilization map.

\subsubsection{Reidemeister II}\label{subsec:r2} The maps giving Reidemeister II
invariance can be described quite explicitly. Assume $K'$ is obtained
from $K$ by the move from Figure~\ref{subfig:r2}. Order the two new
crossing in $K'$ and for any $v\in\{0,1\}^2$, let $(K')_v$ denote the
partial resolution of $K'$ at the two new crossings corresponding to
$v$. Choose the ordering of the new crossings so that $(K')_{10}=K$. Then
$(K')_{01}$ has an unknot component, say $U_0$, contained in the
isotopy region. Let $E$ be the subcomplex of $\BNcx(K')$ spanned by the
generators in $\BNcx((K')_{11})$ and the generators in
$\BNcx((K')_{01})$ where $U_0$ is labeled by $x_+$;
therefore, $\BNcx((K')_{10})$ is a subcomplex of $\BNcx(K')/E$ and
we have the following projection and inclusion maps:
\[
\BNcx(K')\stackrel{\pi}{\onto} \BNcx(K')/E\stackrel{\iota}{\hookleftarrow} \BNcx((K')_{10})\cong\BNcx(K).
\]
Both the complexes $E$ and $(\BNcx(K')/E)/\BNcx((K')_{10})$ can be
contracted by cancellations of the form described in
Lemma~\ref{lem:cancellation}, and therefore both $\pi$ and $\iota$
have filtered homotopy inverses, say $\pi^{-1}$ and $\iota^{-1}$.
The map from $\BNcx(K')$ to $\BNcx(K)$ is $\iota^{-1}\circ\pi$ and the map
from $\BNcx(K)$ to $\BNcx(K')$ is $\pi^{-1}\circ\iota$.

\subsubsection{Reidemeister III}\label{subsec:r3}
The maps giving the usual Reidemeister III invariance can also be
written explicitly, but doing so is somewhat tedious. Instead, we
adopt the following indirect argument, which is similar in spirit to
the previous argument. As in~\cite[Section 7.3]{Baldwin-hf-s-seq},
since we have already proved Reidemeister II invariance, it suffices
to prove invariance under the braid-like Reidemeister III move of
Figure~\ref{subfig:r3}.  So, suppose $K$ and $K'$ differ by a
braid-like Reidemeister III move, where $K'$ has $6$ more crossings
than $K$. Order the six new crossings of $K'$, and for any
$v\in\{0,1\}^6$, let $(K')_v$ denote the resolution at the new
crossings corresponding to $v$. Choose the ordering of the new
crossings in $K'$ so that $(K')_{111000}=K$.  It is shown in the proof
of~\cite[Proposition~\ref*{KhSp:prop:RIII}]{RS-khovanov} that there
is a contractible subcomplex $E$ of $\BNcx(K')$ so that
$\BNcx((K')_{111000})$ is a subcomplex of $\BNcx(K')/E$ and
$(\BNcx(K')/E)/\BNcx((K')_{111000})$ is contractible. (Actually,
in~\cite[Proposition~\ref*{KhSp:prop:RIII}]{RS-khovanov}, the
corresponding statement for the Khovanov chain complex $(\KhCx,\diffKh)$, which
is the associated graded object of the filtered Bar-Natan chain
complex $(\BNcx,\diff)$, is proved; however, by looking at the homological
gradings, we see that the subcomplexes and quotient complexes for
$\KhCx$ remain subcomplexes and quotient complexes for $\BNcx$ as
well, and therefore, the statement for the associated graded object
implies the statement for the filtered complex.) This gives a diagram
\[
\BNcx(K')\stackrel{\pi}{\onto} \BNcx(K')/E\stackrel{\iota}{\hookleftarrow} \BNcx((K')_{111000})\cong\BNcx(K).
\]
Furthermore, it is shown in the proof
of~\cite[Proposition~\ref*{KhSp:prop:RIII}]{RS-khovanov} that both the
acyclic complexes $E$ and $(\BNcx(K')/E)/\BNcx((K')_{111000})$ can be
contracted by sequences of elementary cancellations of the form
described in Lemma~\ref{lem:cancellation}. Therefore, both $\iota$ and
$\pi$ are filtered homotopy equivalences, and the homotopy
equivalences $f$ and $g$ are gotten by inverting (up to filtered
homotopy) either $\iota$ or $\pi$.

\subsubsection{Locality of the invariance maps}
  We conclude this subsection with the observation that the
  Reidemeister maps are local in a particular sense, a fact that we
  will need in Section~\ref{sec:flypes}. (This is well-known. Other
  versions of locality are exploited, for instance, in~\cite{Bar-kh-tangle-cob}
  and~\cite{Kho-kh-cob}.)

\begin{proposition}\label{prop:local}
  Suppose that $K_1$ is a link diagram and $T_1\subset K_1$ is a
  tangle. Let $T_2$ be a tangle diagram representing a tangle isotopic
  to $T_1$, and let $K_2=(K_1\setminus T_1)\cup T_2$ be the result of
  replacing $T_1$ with $T_2$ in $K_1$.  Let $f\co \BNcx(K_1)\to
  \BNcx(K_2)$ and $g\co \BNcx(K_2)\to \BNcx(K_1)$ be the filtered maps
  induced by an isotopy from $T_1$ to $T_2$, as above, and let $h$ and
  $g$ denote the filtered homotopies,
  so that $\Id-g\circ f = \diff\circ h + h\circ \diff$ and $\Id-f\circ
  g=\diff\circ k+k\circ \diff$.

  Order the crossings of $K_i$ ($i=1,2$) so that the crossings in
  $K_i\setminus T_i$ come before the crossings in $T_i$; and so that
  the orderings of the crossings in $K_1\setminus T_1=K_2\setminus
  T_2$ agree. Suppose that there are $n$ crossings in $K_i\setminus
  T_i$, and $n_i$ crossings in $T_i$.

  Then:
  \begin{itemize}
  \item Given a generator $((u,v),x)\in \KhCx(K_1)$, with
    $(u,v)\in\{0,1\}^n\times\{0,1\}^{n_1}\cong \{0,1\}^{n+n_1}$,
    $f((u,v),x)$ lies over the sub-cube $\{u\}\times\{0,1\}^{n_2}$ of
    $\{0,1\}^{n+n_2}$; and $h((u,v),x)$ lies over the sub-cube
    $\{u\}\times\{0,1\}^{n_1}$ of $\{0,1\}^{n+n_1}$.

    Moreover, if $C$ is a circle in the resolution $(K_1)_{(u,v)}$
    disjoint from $T_1$, and $C'$ is the corresponding circle in some
    resolution $(K_2)_{(u,w)}$, then the label $f((u,v),x)(C')$ of $C'$
    in $f((u,v),x)$ and the label $h((u,v),x)(C)$ of $C$ in
    $h((u,v),x)$ are the same as the label $x(C)$ of $C$ in $x$.
  \item Given a generator $((u,w),y)\in \KhCx(K_2)$, with
    $(u,w)\in\{0,1\}^n\times\{0,1\}^{n_2}\cong \{0,1\}^{n+n_2}$,
    $g((u,w),y)$ lies over the sub-cube $\{u\}\times\{0,1\}^{n_1}$ of
    $\{0,1\}^{n+n_2}$; and $k((u,w),y)$ lies over the sub-cube
    $\{u\}\times\{0,1\}^{n_2}$ of $\{0,1\}^{n+n_2}$.

    Moreover, if $C$ is a circle in the resolution $(K_2)_{(u,w)}$
    disjoint from $T_2$, and $C'$ is the corresponding circle in some
    resolution $(K_2)_{(u,w)}$, then the label $g((u,w),y)(C')$ of $C'$
    in $g((u,w),y)$ and the label $k((u,w),y)(C)$ of $C$ in
    $k((u,w),y)$ are the same as the label $y(C)$ of $C$ in $y$.
  \end{itemize}
\end{proposition}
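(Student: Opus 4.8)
The plan is to reduce the statement to the three building blocks of the Reidemeister maps — the stabilization/destabilization maps of Sections~\ref{subsec:r1n}--\ref{subsec:r1p}, the Reidemeister II maps of Section~\ref{subsec:r2}, and the Reidemeister III maps of Section~\ref{subsec:r3} — together with Lemma~\ref{lem:cancellation}, and then argue that each elementary move has the asserted locality, so that the composite does too. First I would set up the bookkeeping: fix the ordering of crossings as in the statement, so that every intermediate diagram in a chosen sequence of Reidemeister moves from $K_1$ to $K_2$ inherits a splitting of its cube of resolutions as $\{0,1\}^n\times\{0,1\}^{(\cdot)}$, with the $\{0,1\}^n$ factor indexing the resolutions of the unchanged crossings in $K_1\setminus T_1=K_2\setminus T_2$. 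The key observations are then: (i) an elementary cancellation of a pair $(\al,\be)$ coming from a Reidemeister move inside $T_i$ only involves differentials supported in the $\{0,1\}^{(\cdot)}$ factor, so the homotopy $h$ in Lemma~\ref{lem:cancellation} — which sends $\be\mapsto\al$ — preserves the $u$-coordinate, and the projection/inclusion maps $f$, $g$ there also preserve it; and (ii) all of these maps act on the labels of circles disjoint from the tangle by the identity, because the differentials used in the cancellations are merges and splits taking place entirely within the tangle region, hence do not touch circles lying in $K_i\setminus T_i$.

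The argument I would give has the following steps, carried out in order. Step 1: verify locality for the stabilization maps of Sections~\ref{subsec:r1n} and~\ref{subsec:r1p}. Here $f$ is literally an inclusion (or projection) forgetting or adding a single unknot $U_0$ contained in the isotopy region, and the cancellations are of the form $x\mapsto x\otimes x_-$, all supported over the sub-cube fixing $u$; a circle $C$ disjoint from the tangle is untouched by the forgetful identification and by every cancellation, so its label is literally unchanged, and $h$ is supported over $\{u\}\times\{0,1\}^{(\cdot)}$ since it is a sum of the maps $\be\mapsto\al$ from Lemma~\ref{lem:cancellation}. Step 2: verify locality for the Reidemeister II maps. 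The maps $\pi$, $\iota$, and their homotopy inverses $\pi^{-1}$, $\iota^{-1}$ are built from cancellations contracting $D$ and $(\BNcx(K')/D)/\BNcx((K')_{10})$; each of these cancellations is again supported in the tangle sub-cube and acts trivially on circles outside, and the composite $\iota^{-1}\circ\pi$ (resp.\ $\pi^{-1}\circ\iota$) inherits both properties. Step 3: do the same for the Reidemeister III maps, using that, as recorded in Section~\ref{subsec:r3}, the contraction of $D$ and of $(\BNcx(K')/D)/\BNcx((K')_{111000})$ proceeds by elementary cancellations of the form in Lemma~\ref{lem:cancellation}, each supported in the tangle sub-cube; hence the same conclusions hold. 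Step 4: assemble. Given a general isotopy from $T_1$ to $T_2$, write it as a composition of Reidemeister moves inside the tangle region; the induced $f$ and $g$ are the corresponding composites of the elementary maps, and the homotopies $h$, $k$ are the standard composites/sums built from the elementary homotopies and maps. Since "lying over $\{u\}\times\{0,1\}^{(\cdot)}$" and "acting as the identity on labels of circles disjoint from the tangle" are both preserved under composition of such maps, and under the bilinear formula that produces a homotopy for a composite out of homotopies for the factors, the claimed locality for $f$, $g$, $h$, $k$ follows.

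The main obstacle I anticipate is not any single move but the compatibility of the homotopies under composition: when $f=f_2\circ f_1$ and $g=g_1\circ g_2$, the natural null-homotopy of $\Id-g\circ f$ is $h_1+g_1\circ h_2\circ f_1$ (and similarly for $k$), so one must check that each term in this expression still lies over $\{u\}\times\{0,1\}^{(\cdot)}$ and still fixes the labels of circles outside the tangle — which it does, because $g_1$, $f_1$ are themselves local by the inductive step and $h_2$ is local by the single-move analysis, and composing maps that preserve the $u$-coordinate and fix outside-labels again has that property. A secondary subtlety is purely notational: one must be careful that the "corresponding circle $C'$" in a resolution of $K_2$ is well-defined, i.e.\ that a circle of $(K_1)_{(u,v)}$ disjoint from $T_1$ determines a unique circle in each $(K_2)_{(u,w)}$; this is immediate because outside the tangle region the two diagrams and their resolutions literally agree. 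Everything else is a routine, if tedious, unwinding of the explicit formulas in Lemma~\ref{lem:cancellation} and Sections~\ref{subsec:r1n}--\ref{subsec:r3}.
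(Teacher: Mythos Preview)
Your proposal is correct and follows essentially the same approach as the paper: the paper's proof is a one-sentence observation that the subcomplexes, quotient complexes, and cancelled arrows from Sections~\ref{subsec:r1n}--\ref{subsec:r3} and Lemma~\ref{lem:cancellation} are all given locally, and your Steps~1--4 simply spell this out move by move (with the added care about composing homotopies, which the paper leaves implicit).
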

\begin{proof}
  This is immediate from the form of the maps given in
  Sections~\ref{subsec:r1n}--\ref{subsec:r3}; the key point is that
  the subcomplexes and quotient complexes, and the arrows one cancels
  (Lemma~\ref{lem:cancellation}) to prove that the inclusion and
  projection maps are filtered homotopy equivalences, are all given
  locally.
\end{proof}

\section{The filtered Plamenevskaya invariant}\label{sec:filt-plam}
In this section we define the (filtered extension of the)
Plamenevskaya invariant (Section~\ref{sec:def-filt}) and prove it is
invariant under transverse isotopies
(Section~\ref{sec:invariance}). We then make some observations on its
behavior under negative stabilization (Section~\ref{sec:neg-stab})
before proving our main theorem, invariance under flypes and $\SZ$ moves
(Section~\ref{sec:flypes}). After this, we make a few further observations
guaranteeing that the filtered Plamenevskaya invariant does no better
than the classical invariants at distinguishing transverse
representatives of low-crossing knots (Section~\ref{sec:sad}).

\subsection{The definition of the invariant}\label{sec:def-filt}
Let $K$ be a transverse link in $S^3$ (with respect to the
standard contact structure $\xi_\std$), presented as the closure of an
(oriented) braid. Abusing notation slightly, we will also use $K$ to
denote the corresponding link diagram.

\begin{definition}
  Recall from Section~\ref{subsec:bar-natan} that associated to each
  orientation $o$ of $K$ is a cycle $\psi(o)$ in $\BNcx(K)$.  The
  \emph{positive (respectively, negative) filtered Plamenevskaya
    invariant} of $K$ is the generator $\ppsi(K)\coloneqq \psi(o)$
  (respectively, $\npsi(K)\coloneqq \psi(-o)$) corresponding to the
  usual orientation of $K$ as a transverse link (respectively, the
  opposite orientation to the usual one).
  Since most of the results in this paper will apply to both
  $\ppsi(K)$ and $\npsi(K)$, we will use
  $\pnpsi(K)$ to denote either $\ppsi(K)$ or $\npsi(K)$. The
  invariant $\pnpsi(K)$ lies in homological grading
  \[
  \gr_h(\pnpsi(K))=-n_-+n_-=0.
  \]
  With respect to the $q$-filtration, the lowest filtered part of
  $\psi(\pm o)$ is when each circle is labeled by $x_-$;
  this lies in filtration
  \[
  \gr_q(\pnpsi(K))=n_+-n_--m=\SL(K).
  \]
  (Here $m$ denotes the braid index.)
  We thus regard $\pnpsi(K)$ as an
  element of $\Filt_{\SL(K)}\BNcx^0(K).$
\end{definition}

The sense in which $\pnpsi(K)$ is an invariant of the transverse
isotopy class of $K$ is made precise in
Theorem~\ref{thm:filt-inv}. Note that it follows from (the argument
in)~\cite{Lee-kh-endomorphism} that $\pnpsi(K)\in \BNcx^0(K)$ is a
cycle; this can also be seen directly from the fact that the only differentials from the oriented resolution correspond to merge cobordisms, and $m(\xbar\otimes x_-)=m(x_-\otimes \xbar)=0$, cf.~Formula~\eqref{eq:Lee-mults}.

The filtered Plamenevskaya invariant induces a number of auxiliary
invariants. For any $p\leq 0< q$, let
\[
\pnpsi_{p,q}(K)\in \Filt_{\SL(K)+2p}\BNcx^0(K)/\Filt_{\SL(K)+2q}\BNcx^0(K)
\]
denote the image of $\pnpsi(K)$ under the obvious chain map
$\Filt_{\SL(K)}\BNcx\to
\Filt_{\SL(K)+2p}\BNcx/\Filt_{\SL(K)+2q}\BNcx$, and let
\[
[\pnpsi_{p,q}(K)]\in
H^0(\Filt_{\SL(K)+2p}\BNcx(K)/\Filt_{\SL(K)+2q}\BNcx(K))
\]
denote the image of $\pnpsi_{p,q}$ in the homology.  Note:
$\pnpsi_{0,\infty}(K)=\pnpsi(K)$; $[\ppsi_{-\infty,\infty}]$ and
$[\npsi_{-\infty, \infty}]$ are two of the generators of $H^*(\BNcx)$
corresponding to the orientations $o$ and $-o$ respectively; and the
element
\[
\pnpsi_{0,1}(K)\in\Filt_{\SL(K)}\BNcx^0(K)/\Filt_{\SL(K)+2}\BNcx^0(K)=\KhCx^{0,\SL(K)}(K)
\]
is the transverse invariant defined by
Plamenevskaya~\cite{Plam-06-KhTrans}. In particular,
$\ppsi_{0,1}(K)=\npsi_{0,1}(K)$. Indeed, since the lowest-filtration
parts of $\ppsi(K)$ and $\npsi(K)$ are the same (every circle
decorated by $x_-$), the difference element
$\dpsi(K)\coloneqq \ppsi(K)-\npsi(K)$ is (a
cycle) in
$\Filt_{\SL(K)+2}\BNcx(K)$. For $p\leq 1<q$, let
$\dpsi_{p,q}(K)$ denote the image of $\dpsi(K)\in \Filt_{\SL(K)+2}\BNcx(K)$ in the subquotient
$\Filt_{\SL(K)+2p}\BNcx/\Filt_{\SL(K)+2q}\BNcx$. Then we have
$\dpsi_{1,\infty}(K)=\dpsi(K)\in \Filt_{\SL(K)+2}\BNcx(K)$, while
$\dpsi_{0,\infty}(K)=\ppsi(K)-\npsi(K)\in \Filt_{\SL(K)}\BNcx(K)$ is determined by the invariants $\ppsi(K)$ and $\npsi(K)$.

As an aside, note that the Rasmussen
invariant $s(K)$ has the following description:
\[
s(K)=\SL(K)-1+2(\min\set{q}{[\pnpsi_{-\infty,q}(K)]\neq 0}).
\]
In particular, this implies the upper bound on the self-linking number first
observed by Plamenevskaya~\cite[Section 7]{Plam-06-KhTrans}:
\[
s(K)\geq\SL(K)-1+2=\SL(K)+1.
\]

\subsection{Invariance under transverse isotopies}\label{sec:invariance}
In this subsection we prove transverse invariance of $\pnpsi(K)$ and
$\dpsi(K)$. We start with $\pnpsi(K)$:
\begin{theorem}\label{thm:filt-inv}
  Suppose that $K$ and $K'$ are diagrams for closed braids, and that
  the corresponding transverse links are transversely isotopic. Then
  the filtered homotopy equivalence $f\co \BNcx(K)\to \BNcx(K')$,
  induced by some sequence of transverse Markov moves connecting $K$
  to $K'$, satisfies
  \[
  f(\pnpsi(K))=\pm\pnpsi(K')+\diff \phi
  \]
  (for one choice of $+$ or $-$), where
  $\phi\in\Filt_{\SL(K)}\BNcx^{-1}(K')$. Moreover, the filtered
  homotopy equivalence $f$ and the element $\phi$ can be chosen to be
  local in the sense of Proposition~\ref{prop:local}.
\end{theorem}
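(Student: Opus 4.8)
The plan is to reduce the theorem to checking it move-by-move for each of the four transverse Markov–Reidemeister moves (positive Reidemeister I, Reidemeister II, braid-like Reidemeister III, and their inverses/conjugation), since the homotopy equivalence for a composite sequence is the composite of the individual ones, and the class of $\pnpsi$ modulo $\diff(\Filt_{\SL}\BNcx^{-1})$ is preserved under composition as long as it is preserved at each step. Conjugation of braids is a planar isotopy of the closed-braid diagram and changes nothing. So the real content is: for each of RI${}^+$, RII, RIII, the explicit map $f$ from Section~\ref{sec:R-maps} sends $\psi(o)$ to $\pm\psi(o')$ plus a boundary in the correct filtration level, where $o'$ is the induced orientation on $K'$ (and similarly $\psi(-o)\mapsto\pm\psi(-o')$, covering the $\pnpsi$ case uniformly).

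First I would pin down how the canonical generator $\psi(o)$ behaves under each elementary cancellation of Lemma~\ref{lem:cancellation}. The key observation is that $\psi(o)$ is supported at the oriented resolution vertex $K_o$, and under a transverse Markov move the oriented resolution of $K'$ restricted to the isotopy region is exactly the oriented resolution of $K$ there, with at most one extra circle (the little circle $U_0$ appearing in RI/RII) that receives the label forced by the arc-counting rule. For the negative-to-positive stabilization RI${}^+$ (Section~\ref{subsec:r1p}): the destabilization map is the projection $\BNcx(K')\onto\BNcx(K')/D\cong\BNcx(K)$; I would check that the vertex carrying $\psi(o')$ for the bigger diagram maps under this projection exactly onto the vertex carrying $\psi(o)$, with the label of each circle preserved (using that forgetting $U_0$, which is labeled $x_+$ in the relevant part of the complement of $D$, matches the arc-count bookkeeping since $U_0$ is a small unknot crossed an even number of times by the arc to $\infty$, or adjusting the universal choice if needed). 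The filtration is automatically respected because $\psi(o')$ lies in $\Filt_{\SL(K')}$ and $\SL$ is unchanged by positive (de)stabilization. For RII (Section~\ref{subsec:r2}), the map is $\iota^{-1}\circ\pi$; here I would use Lemma~\ref{lem:cancellation} to track $\psi$ through $\pi$ (the vertex $(K')_{11}$ and the $x_+$-labeled $U_0$ part of $(K')_{01}$ are cancelled against each other) and then through the homotopy inverse $\iota^{-1}$, producing $\psi(o')$ up to sign plus a boundary term that, by the filtration-preserving property of all maps and homotopies in Lemma~\ref{lem:cancellation} together with $\gr_q(\psi(o'))=\SL$, lies in $\Filt_{\SL}\BNcx^{-1}$.

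For RIII (Section~\ref{subsec:r3}), I would not attempt to write the map explicitly; instead I would again run $\psi$ through the zig-zag $\BNcx(K')\overset{\pi}{\onto}\BNcx(K')/D\overset{\iota}{\hookleftarrow}\BNcx((K')_{111000})\cong\BNcx(K)$ using the fact, cited from \cite{RS-khovanov}, that $D$ and $(\BNcx(K')/D)/\BNcx((K')_{111000})$ contract via elementary cancellations as in Lemma~\ref{lem:cancellation}, and use that the oriented resolution of $K'$ for the braid-like RIII lies at the vertex $(K')_{111000}$ (the all-positive-crossing resolution at the six new crossings is the oriented one, since a braid-like RIII involves three positive crossings on each side), so $\psi(o')$ is literally carried at the surviving vertex and maps to $\psi(o)$ up to sign under the identification; the homotopy inverses only add boundary terms, which stay in the right filtration because $\gr_q$ is preserved by the homotopies and $\psi(o')$ already has $\gr_q=\SL$. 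Locality of $f$ and $\phi$ in the sense of Proposition~\ref{prop:local} is inherited directly: all the subcomplexes, quotient complexes, and cancelling arrows used above are local, so Proposition~\ref{prop:local} applies verbatim, and the homotopy $\phi$ produced as a sum of the local homotopy terms $h$ lies over the sub-cube of the isotopy region.

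The main obstacle I anticipate is bookkeeping the signs and the universal choice consistently across the whole sequence: Lemma~\ref{lem:cancellation} introduces a $\pm$ at each cancellation (depending on orienting $\be$ so that $\langle\diff_C\al,\be\rangle=1$), and the arc-counting definition of $\psi(o)$ involves an arbitrary global choice of swapping $x_-\leftrightarrow\xbar$, so I must verify that these choices can be made coherently so the final answer is $\psi(o')$ up to an \emph{overall} sign (which is all the theorem claims) rather than a sign that depends on the labels of individual circles. The cleanest way to handle this is to observe that $\psi(o)$ is a single basis generator at a single vertex, so $f(\psi(o))$ has a well-defined ``leading term'' at the image vertex obtained by chasing only the parts of the maps that are identity-like on that vertex; tracking that leading term through the cancellations gives $\pm\psi(o')$ with a sign that is globally determined by the sequence of moves, and everything else is, by degree reasons, a $\diff$-boundary in $\Filt_{\SL}\BNcx^{-1}$. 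The only genuinely delicate point is confirming that no cancellation in the RII and RIII arguments ever pairs a summand of $\psi(o')$ with something else — i.e., that $\psi(o')$ is never ``hit'' by a cancelling arrow and never itself cancels — which follows because the vertices $(K')_{10}$ (for RII) and $(K')_{111000}$ (for RIII) survive into the final complex by construction, so the oriented-resolution generator supported there is untouched.
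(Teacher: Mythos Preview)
Your approach is essentially the same as the paper's: reduce via the transverse Markov theorem to positive (de)stabilization, Reidemeister~II, and braid-like Reidemeister~III, and then observe that for each move the generator $\pnpsi$ lies in the ``surviving'' sub/quotient piece of the zig-zag, so that $\pi(\pnpsi(K'))=\iota(\pnpsi(K))$ on the nose and the boundary term $\diff\phi$ arises only from the filtered homotopy inverses (which are automatically in $\Filt_{\SL}$). Locality is then immediate from Proposition~\ref{prop:local}.

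Two details in your sketch are garbled and should be fixed. First, in the RI${}^+$ case the complement of $D$ consists of generators in $(K')_0$ with $U_0$ labeled $x_-$, not $x_+$; the point is that in $\pnpsi(K')$ the circle $U_0$ is labeled $x_-$ or $\xbar=x_--x_+$, and under the quotient by $D$ either projects to the $x_-$-labeled generator, which then forgets to $\pnpsi(K)$. Second, for the braid-like RIII as set up in Section~\ref{subsec:r3}, $K'$ has six \emph{additional} crossings (three negative, three positive, so that resolving them orientedly gives the identity tangle), and the oriented resolution sits at the vertex $111000$ precisely because one takes the $1$-resolution at the three negative crossings and the $0$-resolution at the three positive ones---not because ``all six are positive.'' Finally, your worry about coherent signs is unnecessary: the explicit maps as written send $\pnpsi$ to $\pnpsi$ exactly; the $\pm$ in the statement reflects only that the Khovanov/Bar-Natan complexes and maps are themselves well-defined only up to a global sign.
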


\begin{corollary}
  The image $[\pnpsi_{p,q}]$ of $\pnpsi_{p,q}$ in homology
  $H^0\bigl(\Filt_{\SL(K)+2p}\BNcx(K)/\Filt_{\SL(K)+2q}\BNcx(K)\bigr)$
  is an invariant of the transverse link $K$.
\end{corollary}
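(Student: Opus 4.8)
The plan is to deduce the Corollary directly from Theorem~\ref{thm:filt-inv} together with the definition of $\pnpsi_{p,q}$ as the image of $\pnpsi$ under the natural chain map
\[
\Filt_{\SL(K)}\BNcx^0(K)\longrightarrow \Filt_{\SL(K)+2p}\BNcx^0(K)/\Filt_{\SL(K)+2q}\BNcx^0(K).
\]
First I would observe that given any two braid diagrams $K,K'$ whose associated transverse links are transversely isotopic, they are related by a sequence of transverse Markov moves, so Theorem~\ref{thm:filt-inv} supplies a filtered homotopy equivalence $f\co\BNcx(K)\to\BNcx(K')$ with $f(\pnpsi(K))=\pm\pnpsi(K')+\diff\phi$ for some $\phi\in\Filt_{\SL(K)}\BNcx^{-1}(K')$. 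Since $\SL(K)=\SL(K')$, the map $f$ carries $\Filt_m\BNcx(K)$ into $\Filt_m\BNcx(K')$ for every $m$, hence descends to a chain map $\bar f$ on each subquotient $\Filt_{\SL+2p}\BNcx/\Filt_{\SL+2q}\BNcx$; because $f$ is a filtered homotopy equivalence, $\bar f$ is a quasi-isomorphism on each subquotient.

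The main step is then to chase the equation $f(\pnpsi(K))=\pm\pnpsi(K')+\diff\phi$ through this subquotient. Applying the quotient map $\Filt_{\SL}\BNcx^0(K')\to\Filt_{\SL+2p}\BNcx^0(K')/\Filt_{\SL+2q}\BNcx^0(K')$, and using that $\phi$ lies in $\Filt_{\SL}\BNcx^{-1}(K')$ so that $\diff\phi$ maps to a boundary in the subquotient complex, we get $\bar f(\pnpsi_{p,q}(K))=\pm\pnpsi_{p,q}(K')+\diff\bar\phi$ for the image $\bar\phi$ of $\phi$. Passing to homology $H^0$ kills the exact term, so $\bar f_*[\pnpsi_{p,q}(K)]=\pm[\pnpsi_{p,q}(K')]$. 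Since $\bar f_*$ is an isomorphism on $H^0$ of the subquotient (being induced by a quasi-isomorphism), this shows that $[\pnpsi_{p,q}(K)]$ and $[\pnpsi_{p,q}(K')]$ agree up to the isomorphism $\bar f_*$ and up to sign, which is exactly what it means for $[\pnpsi_{p,q}]$ to be a well-defined invariant of the transverse link.

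One genuine subtlety, and the closest thing to an obstacle, is the sign and the dependence on the chosen sequence of Markov moves: Theorem~\ref{thm:filt-inv} only gives the equality up to an overall $\pm$ and up to the choice of homotopy equivalence $f$, so strictly speaking $[\pnpsi_{p,q}]$ is an invariant of $K$ only as an element of $H^0(\cdot)$ up to sign (equivalently, as an element of the set $H^0(\cdot)/(\pm 1)$, or one fixes the sign by also tracking orientations). I would remark that this sign ambiguity is the same one already present in Plamenevskaya's original invariant $\pnpsi_{0,1}=\pnpsi_{p,q}$ with $p=0,q=1$, and is harmless for the applications; alternatively, one can pin down the sign by recording the induced map on $H^*(\BNcx)$, whose generators are indexed by orientations, and noting that $f$ must send the generator for $o$ to $\pm$ the generator for $o$ (not $-o$) since $f$ is induced by an orientation-preserving isotopy. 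Beyond this bookkeeping the argument is a formal consequence of Theorem~\ref{thm:filt-inv}, so I expect the proof to be short.
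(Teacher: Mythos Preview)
Your proposal is correct and matches the paper's approach: the paper treats the corollary as an immediate formal consequence of Theorem~\ref{thm:filt-inv} and does not give a separate proof. Your write-up simply makes explicit the routine passage to subquotients, and your remarks about the sign ambiguity are consistent with how the paper handles this issue elsewhere.
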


The case $p=0,q=1$ is~\cite[Theorem 2]{Plam-06-KhTrans}. The general case is not
significantly different, but we spell out the proof.

\begin{proof}[Proof of Theorem~\ref{thm:filt-inv}]
  By the transverse Markov
  theorem~\cite{Wri-knot-transMarkov,OS-knot-transMarkov},
  it suffices to
  prove that $\pnpsi(K)$ is invariant under (braid-like) Reidemeister II
  and III moves and positive stabilizations and destabilizations (see
  Figures~\ref{subfig:r1p}--\ref{subfig:r3}). The relevant maps are
  described in Section~\ref{subsec:r1p}--\ref{subsec:r3}. We discuss
  each move briefly in turn.

  For positive destabilization, with notation as in
  Section~\ref{subsec:r1p}, note that $\pnpsi(K)$ lies in
  $\BNcx((K')_0)$, and has $U_0$ labeled by either $x_-$ or $\xbar$
  (and the other components of the oriented resolution labeled
  alternately by $\xbar$ and $x_-$). In either case, $\pnpsi(K')$
  survives to $\BNcx(K')/D$ and is identified with $\pnpsi(K')$ under
  the identification $\BNcx(K')/D\cong \BNcx(K)$.  Positive
  stabilization is the homotopy inverse to positive destabilization,
  and hence also 
  preserves $\pnpsi$ (up to the boundary of an element of $\Filt_{\SL(K)}\BNcx(K)$).

  Reidemeister II and III are easier. For Reidemeister II, with
  notation as in Section~\ref{subsec:r2}, $\pnpsi(K')$ lies in
  $\BNcx((K')_{10})$, and so $\pi(\pnpsi(K'))$ is exactly
  $\iota(\pnpsi(K))$. Since the two filtered homotopy equivalences are
  obtained by inverting either $\pi$ or $\iota$ (up to homotopy), they
  respect $\psi$. Similarly, for Reidemeister III, with notation as in
  Section~\ref{subsec:r3}, $\pnpsi(K')$ lies in
  $\BNcx((K')_{111000})$, so again $\pi(\pnpsi(K'))=\iota(\pnpsi(K))$,
  and hence the filtered homotopy equivalences respect $\psi$.

  The fact that the map $f$ and element $\phi$ can be chosen to be
  local follows from Proposition~\ref{prop:local}.
\end{proof}

\begin{remark}\label{rem:linear-combinations}
The above proof of Theorem~\ref{thm:filt-inv} actually shows more
generally that maps induced by transverse isotopy preserve any linear
combination $\al\ppsi+\be\npsi$ (not just $\ppsi$ and $\npsi$), up to
a boundary in $\Filt_{\SL(K)}\BNcx$ and an overall sign.
\end{remark}
Now we turn to the invariant $\dpsi(K)$. By
Remark~\ref{rem:linear-combinations},
$\ppsi(K)-\npsi(K)=\dpsi_{0,\infty}(K)$ is an invariant in
$\Filt_{\SL(K)}\BNcx(K)$; but in fact more is true:

\begin{theorem}\label{thm:diff-is-invt}
  The element $\dpsi(K)\in\Filt_{\SL(K)+2}\BNcx(K)$ is an invariant of
  the transverse link $K$; more precisely, if
  $f\from\BNcx(K)\to\BNcx(K')$ is the filtered homotopy equivalence
  induced by some sequence of transverse Markov moves connecting
  closed braids $K$ and $K'$, then
  \[
  f(\dpsi(K))=\pm\dpsi(K')+\diff\phi
  \]
  for some $\phi\in\Filt_{\SL(K)+2}\BNcx^{-1}(K')$.
\end{theorem}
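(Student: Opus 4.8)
The plan is to rerun the proof of Theorem~\ref{thm:filt-inv}, tracking the $q$-filtration one level more carefully. The key initial observation is that $\dpsi(K)=\ppsi(K)-\npsi(K)$ is a cycle lying in $\Filt_{\SL(K)+2}\BNcx^{0}(K)$, and not merely in $\Filt_{\SL(K)}$, because $\ppsi(K)$ and $\npsi(K)$ have identical lowest-filtration part (every circle of the oriented resolution labeled $x_-$). Since the self-linking number is unchanged by transverse Markov moves, abbreviate $\SL=\SL(K)=\SL(K')$. The property to be established — that the induced filtered homotopy equivalence carries $\dpsi(K)$ to $\pm\dpsi(K')+\diff\phi$ with $\phi\in\Filt_{\SL+2}\BNcx^{-1}(K')$ — is stable under composition of moves: if $f_1$ and $f_2$ each satisfy it, then so does $f_2\circ f_1$, its error term being $\pm(\text{error of }f_2)+f_2(\text{error of }f_1)$, which still lies in $\Filt_{\SL+2}$ because $f_2$ is filtered. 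So by the transverse Markov theorem it suffices to treat a single move: a braid-like Reidemeister~II or~III move, a positive stabilization, or a positive destabilization.

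For each such move I would invoke the corresponding analysis in the proof of Theorem~\ref{thm:filt-inv}. For a Reidemeister~II or~III move one has $\iota(\pnpsi(K))=\pi(\pnpsi(K'))$ in $\BNcx(K')/D$ (with $\pi,\iota$ as in Sections~\ref{subsec:r2}--\ref{subsec:r3}), valid whether $\pnpsi$ denotes $\ppsi$ or $\npsi$, and $f=\pi^{-1}\circ\iota$ for a filtered homotopy inverse $\pi^{-1}$ of $\pi$; choosing a filtered homotopy $h$ with $\diff h+h\diff=\Id_{\BNcx(K')}-\pi^{-1}\circ\pi$ gives $f(\pnpsi(K))=(\pi^{-1}\circ\pi)(\pnpsi(K'))=\pnpsi(K')+\diff\bigl(h(\pnpsi(K'))\bigr)$, since $\pnpsi(K')$ is a cycle. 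For a positive stabilization, $f$ is a filtered homotopy inverse of the destabilization projection $\pi$, which sends $\pnpsi(K')$ to $\pnpsi(K)$; hence $f(\pnpsi(K))=f\bigl(\pi(\pnpsi(K'))\bigr)=\pnpsi(K')+\diff\bigl(h(\pnpsi(K'))\bigr)$ for a filtered homotopy $h$ with $\diff h+h\diff=\Id_{\BNcx(K')}-f\circ\pi$. For a positive destabilization, $f$ is itself the destabilization projection, which sends $\pnpsi(K)$ exactly to $\pnpsi(K')$. In every case the \emph{same} filtered homotopy $h$ serves both orientations, and the overall sign is the same for both (cf.\ Remark~\ref{rem:linear-combinations}), so subtracting the two identities and using linearity of $f$ and $h$ gives $f(\dpsi(K))=\pm\dpsi(K')+\diff\bigl(h(\dpsi(K'))\bigr)$; and $h(\dpsi(K'))\in\Filt_{\SL+2}\BNcx^{-1}(K')$ because $h$ is filtered and $\dpsi(K')\in\Filt_{\SL+2}$. (For the positive destabilization the error term is $0$.)

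The step I would watch most closely is exactly this last one. In Theorem~\ref{thm:filt-inv} the error term $h(\pnpsi(K'))$ only lies in $\Filt_{\SL}$, since $\pnpsi(K')$ does, and one might fear that improving $h(\ppsi(K'))-h(\npsi(K'))$ to filtration $\SL+2$ requires delicate cancellation inside the acyclic subcomplexes $D$ used to build the Reidemeister maps. It does not: because the Reidemeister homotopy $h$ is a single fixed linear filtered map applied to both orientations, $h(\ppsi(K'))-h(\npsi(K'))=h(\dpsi(K'))$ automatically inherits the higher filtration of $\dpsi(K')$. Thus the only facts that genuinely need checking are that one homotopy $h$ works for both orientations and that $h$ — equivalently, each Reidemeister map and its homotopy inverse, cf.\ Proposition~\ref{prop:local} — is filtered; both are already built into the constructions of Sections~\ref{subsec:r1p}--\ref{subsec:r3} and Lemma~\ref{lem:cancellation}.
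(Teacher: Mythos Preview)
Your proposal is correct and follows essentially the same approach as the paper's proof: reduce to individual Markov moves, observe that for positive destabilization the map sends $\ppsi$ and $\npsi$ (hence $\dpsi$) to the corresponding elements on the nose, and for positive stabilization and Reidemeister~II/III use that the filtered homotopy inverse applied to $\dpsi$ produces an error term in the correct filtration. The paper compresses the last point into ``exactly as in Theorem~\ref{thm:filt-inv} (with $\dpsi$ in place of $\pnpsi$)'', whereas you spell out explicitly that the same filtered homotopy $h$ serves both orientations and that linearity plus $\dpsi(K')\in\Filt_{\SL+2}$ forces $h(\dpsi(K'))\in\Filt_{\SL+2}$; this is exactly the content the paper is eliding.
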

\begin{proof}
  As in the proof of Theorem~\ref{thm:filt-inv}, we check
  invariance under positive (de)stabilization and braid-like
  Reidemeister II and III moves. For positive destabilization, we
  verified in the proof of Theorem~\ref{thm:filt-inv} that $\ppsi(K')$
  (respectively, $\npsi(K')$) maps to $\ppsi(K)$ (respectively,
  $\npsi(K)$) on the nose. It follows that the destabilization map
  takes their difference $\dpsi(K')$ to $\dpsi(K)$. The
  stabilization map is a filtered homotopy inverse to the
  destabilization map, and hence takes $\dpsi(K)$ to $\dpsi(K')$ up to
  the boundary of an element of $\Filt_{\SL(K)+2}\BNcx(K)$. This proves invariance under positive
  (de)stabilization.  The proof of Reidemeister II and III invariance
  is exactly as in Theorem~\ref{thm:filt-inv} (with $\dpsi$ in place
  $\pnpsi$).
\end{proof}

\begin{remark}
  The reason for the $\pm$ signs in Theorems~\ref{thm:filt-inv}
  and~\ref{thm:diff-is-invt} is perhaps not obvious: the Reidemeister
  maps we wrote down respect $\pnpsi$ (respectively, $\dpsi$) exactly,
  not just up to sign. The $\pm$ signs arise because the maps on
  Khovanov homology (and indeed, the Khovanov homology groups
  themselves) are only well-defined up to an overall sign.
\end{remark}

\subsection{Behavior under negative stabilization}\label{sec:neg-stab}
\begin{proposition}\label{prop:neg-stab}
  Let $K$ be a closed braid in $S^3$ and let $K'$ be the result of negatively
  stabilizing $K$ once, so $\SL(K')=\SL(K)-2$. Let
  $\pnpsi(K)\in\Filt_{\SL(K)}\BNcx(K)$ and
  $\pnpsi(K')\in\Filt_{\SL(K')}\BNcx(K')$ be the corresponding
  filtered Plamenevskaya invariants and let $f\co \BNcx(K)\to \BNcx(K')$
  and $g\co \BNcx(K')\to\BNcx(K)$ be the filtered chain maps giving
  Reidemeister I invariance (cf.~Section~\ref{subsec:r1n}). Then there
  are elements $\theta\in\Filt_{\SL(K')}\BNcx^{-1}(K)$ and
  $\theta'\in\Filt_{\SL(K')}\BNcx^{-1}(K')$, local in the sense of
  Proposition~\ref{prop:local}, such that
  \begin{align*}
  g(\pnpsi(K'))&=\pm\pnpsi(K)+\diff\theta\text{ in }\Filt_{\SL(K')}\BNcx(K)
  \shortintertext{and}
  f(\pnpsi(K))&=\pm\pnpsi(K') +\diff\theta'\text{ in }\Filt_{\SL(K')}\BNcx(K').
  \end{align*}
\end{proposition}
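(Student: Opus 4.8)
The plan is to establish the second displayed identity, $f(\pnpsi(K))=\pm\pnpsi(K')+\diff\theta'$, by a direct computation --- the stabilization map $f$ is merely an inclusion --- and then to deduce the first identity formally. Indeed, $f$ and $g$ are produced by (iterated applications of) Lemma~\ref{lem:cancellation}, for which the homotopy on the smaller complex vanishes, so $g\circ f=\Id_{\BNcx(K)}$ on the nose; applying $g$ to the second identity and using that $g$ is a filtered chain map preserving $\gr_h$ then gives $g(\pnpsi(K'))=\pm\pnpsi(K)+\diff\theta$ with $\theta=\mp g\theta'\in\Filt_{\SL(K')}\BNcx^{-1}(K)$ and with the same sign, and locality of $\theta$ follows from that of $g$ (Proposition~\ref{prop:local}) once we know $\theta'$ is local.

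For the second identity, use the notation of Section~\ref{subsec:r1n}: $c$ is the new negative crossing, $(K')_0\cong K$, $(K')_1\cong K\sqcup U_0$, and $D\subset\BNcx((K')_1)$ is the subcomplex on which $U_0$ carries the label $x_+$, so that $f$ is the inclusion $\BNcx(K)\cong D\hookrightarrow\BNcx(K')$ and $f(\pnpsi(K))=\pnpsi(K)\otimes x_+$. Since $c$ is negative, its oriented resolution is the $1$-resolution, so $\pnpsi(K')=\psi(\pm o')$ is supported in $\BNcx((K')_1)\cong\BNcx(K)\otimes V$; as $(K')_1$ is the split diagram $K\sqcup U_0$ with $U_0$ a small unknot, the canonical generator factors as $\pnpsi(K')=\pnpsi(K)\otimes\epsilon_0$ for some $\epsilon_0\in\{x_-,\xbar\}$ (the labels on circles of $K_o$ still reproduce $\pnpsi(K)$). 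Let $\tilde\psi\in\BNcx(K')$ be the generator of the $(K')_0$-summand corresponding to $\pnpsi(K)$ under $(K')_0\cong K$. As $\pnpsi(K)$ is a cycle, the only nonvanishing part of $\diff\tilde\psi$ comes from resolving $c$, which splits the circle $C_o\subset K_o$ meeting the kink; since comultiplication is diagonal in the basis $\{x_-,\xbar\}$, this yields $\diff\tilde\psi=\pm\,\pnpsi(K)\otimes\epsilon_0'$ with $\epsilon_0'=\pnpsi(K)(C_o)\in\{x_-,\xbar\}$. Were $\epsilon_0'=\epsilon_0$, this would exhibit $\pnpsi(K')$ as a boundary, contradicting that $\psi(\pm o')$ represents a nonzero class of $H^*(\BNcx(K'))$ (equivalently, $C_o$ and $U_0$ are neighbouring Seifert circles of the closed braid and so get opposite Bar-Natan labels); hence $\{\epsilon_0,\epsilon_0'\}=\{x_-,\xbar\}$. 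Expanding $x_+=x_--\xbar$ in $f(\pnpsi(K))=\pnpsi(K)\otimes x_+$ and substituting the formula for $\diff\tilde\psi$ now gives $f(\pnpsi(K))=\pm\pnpsi(K')+\diff(\pm\tilde\psi)$, so we may take $\theta'=\pm\tilde\psi$.

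It remains to verify $\theta'=\pm\tilde\psi\in\Filt_{\SL(K')}\BNcx^{-1}(K')$. A generator over the $0$-resolution of $c$ in $\BNcx(K')$ carries the same labels as its counterpart in $\BNcx(K)$, but $n_-$ has grown by $1$, so by \eqref{eq:h-gr} and \eqref{eq:q-gr} its homological grading drops by $1$ and its quantum filtration level by $2$; thus $\gr_h(\tilde\psi)=-1$ and the bottom of $\tilde\psi$ sits in filtration $\SL(K)-2=\SL(K')$. This two-step drop is exactly what is needed, since $f(\pnpsi(K))$ only reaches filtration $\SL(K)$ while $\pnpsi(K')$ reaches $\SL(K')$, and $\diff\tilde\psi$ supplies the missing lowest piece --- which is why $\theta'$ must live in $\Filt_{\SL(K')}$ rather than $\Filt_{\SL(K)}$. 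Finally, $\tilde\psi$ is supported over the $0$-resolution of the single kink crossing and agrees with $\pnpsi$ on all circles disjoint from the kink, hence is local; so $\theta'$, and therefore $\theta=\mp g\theta'$, are local in the sense of Proposition~\ref{prop:local}. The only genuine bookkeeping is keeping track of these grading shifts (so that $\theta,\theta'$ really land in $\Filt_{\SL(K')}$) and the small combinatorial point that $U_0$ and its neighbouring circle receive distinct Bar-Natan labels; everything else is formal manipulation with the explicit Reidemeister I maps.
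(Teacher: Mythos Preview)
Your proof is correct and follows essentially the same route as the paper's. Both arguments place the correction element in $\BNcx((K')_0)$ at the oriented $K$-resolution: the paper specifies $\theta'$ by explicit alternating labels (innermost circle opposite to the $U_0$-label), while you define $\tilde\psi$ as the transport of $\pnpsi(K)$ under $(K')_0\cong K$; these are the same element, and your observation that $U_0$ is small (hence does not contain any $q_{C_i}$) is exactly what makes the two descriptions coincide. Your derivation of the $g$-identity from the $f$-identity via the on-the-nose equality $g\circ f=\Id_{\BNcx(K)}$ (which follows from $k=0$ in Lemma~\ref{lem:cancellation}, iterated) is slightly more explicit than the paper's ``it follows that the filtered homotopy inverse $g$ also respects $\pnpsi$,'' but the content is the same.
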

\begin{proof}
  \begin{figure}
    \centering
    \begin{overpic}[tics=10]{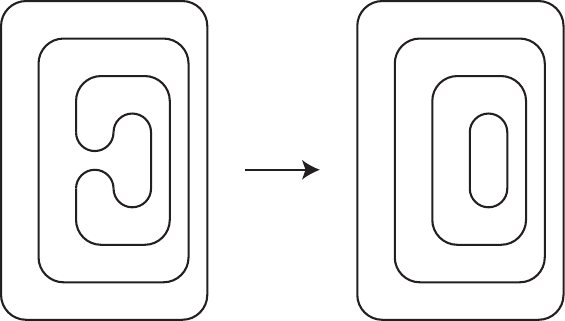}
      \put(84,22){$x_-$}
      \put(78,22){$\xbar$}
      \put(71,22){$x_-$}
      \put(64,22){$\xbar$}
      \put(15.5,22){$\xbar$}
      \put(7.5,22){$x_-$}
      \put(1,22){$\xbar$}
    \end{overpic}
    \caption{\textbf{Invariance under negative stabilization.} The
      generator $\theta'$ is shown on the left, and $\pnpsi(K')$ on the
      right, both in the case that $x=x_-$.}
    \label{fig:neg-stab-inv}
  \end{figure}
  With notation as in Section~\ref{subsec:r1n}, $\pnpsi(K')$ lies in
  $\BNcx((K')_1)$ but not in the subcomplex $D$. Let
  $x\in\{x_-,\xbar\}$ denote the label of $U_0$ in $\pnpsi(K')$, and
  let $\theta'\in\BNcx((K')_0)$ be the element which labels the
  circles alternately by $x_-$ and $\xbar$, starting by labeling the
  inner-most circle oppositely from $x$. (So, $\theta'$ and $x$ give
  the same labels to circles occurring both in $(K')_0$ and $(K')_1$;
  see Figure~\ref{fig:neg-stab-inv}.)
  Assume the crossings in $K'$ are ordered so that the new crossing
  $c$ is the first, and the other crossings are ordered as in
  $K$. Then $\pnpsi(K')-\diff\theta'$ lies in $D$; and under the
  identification $D\cong \BNcx(K)$, it is identified with $\psi(K)$
  (respectively, $-\psi(K)$) if $x$ equals $x_-$ (respectively,
  $\xbar$). The element $\theta'$ lies in
  filtration $\SL(K')$, verifying that $f$ has the specified
  form. (Note that $\theta'$ is local in the sense of
  Proposition~\ref{prop:local}.) It follows that the filtered homotopy
  inverse $g$ to $f$ also respects $\pnpsi$, as an element of
  $\Filt_{\SL(K')}\BNcx$; the fact that the cancellations are local
  implies that we can take $\theta$ to be local, as well.
\end{proof}

\begin{remark}\label{rem:neg-stabilization-linear}
  In a similar vein to Remark~\ref{rem:linear-combinations}, observe
  that (up to an overall sign, and relative boundary in
  $\Filt_{\SL(K')}\BNcx$) the negative stabilization map sends the
  linear combination $\al\ppsi(K)+\be\npsi(K)$ to
  $\al\ppsi(K')-\be\npsi(K')$, and the negative destabilization map
  sends the linear combination $\al\ppsi(K')+\be\npsi(K')$ to
  $\al\ppsi(K)-\be\npsi(K)$.
\end{remark}

\begin{corollary}\label{cor:pre-Jake-preserved}
  If transverse links $K$ and $K'$ become transversely isotopic after
  performing $k$ transverse (negative) stabilizations then
  $\pnpsi_{p,\infty}(K)=\pm\pnpsi_{p,\infty}(K')$ for any $p\leq
  -k$. That is, if $f\from\BNcx(K)\to\BNcx(K')$ is the map associated
  to an isotopy from $K$ to $K'$ corresponding to $k$ transverse
  stabilizations, then a transverse isotopy, and then $k$ transverse
  destabilizations, then $f(\pnpsi(K))=\pm\pnpsi(K')+\diff\theta$ for
  some $\theta\in\Filt_{\SL(K)-2k}\BNcx^{-1}(K')$.
\end{corollary}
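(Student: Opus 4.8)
The plan is to realize the map $f$ of the ``that is'' clause explicitly as a concatenation of elementary moves and to push $\pnpsi(K)$ through it one factor at a time, combining \Proposition{neg-stab} with \Theorem{filt-inv}. Write $K=K_0,K_1,\dots,K_k=\wt{K}$ for the successive negative stabilizations of $K$ and $K'=K'_0,K'_1,\dots,K'_k=\wt{K}'$ for those of $K'$, so that $\wt{K}$ and $\wt{K}'$ are transversely isotopic and $\SL(K_i)=\SL(K)-2i$, $\SL(K'_i)=\SL(K')-2i$. Since transversely isotopic links have the same self-linking number, $\SL(K)-2k=\SL(\wt{K})=\SL(\wt{K}')=\SL(K')-2k$, hence $\SL(K)=\SL(K')$; write $s$ for this common value. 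Then $f$ is by construction the composite
\[
\BNcx(K)\xrightarrow{f_{\mathrm{st}}}\BNcx(\wt{K})\xrightarrow{f_{\mathrm{iso}}}\BNcx(\wt{K}')\xrightarrow{f_{\mathrm{de}}}\BNcx(K'),
\]
where $f_{\mathrm{st}}$ is the composite of the $k$ negative-stabilization maps of Section~\ref{subsec:r1n}, $f_{\mathrm{iso}}$ is a transverse-isotopy map as in \Theorem{filt-inv}, and $f_{\mathrm{de}}$ is the composite of the $k$ negative-destabilization maps.

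Next I would track $\pnpsi$ through each factor, using that all the maps in sight are filtered chain maps, so they commute with $\diff$ and carry $\Filt_m$ into $\Filt_m$. By \Proposition{neg-stab}, the $i$-th stabilization map sends $\pnpsi(K_{i-1})$ to $\pm\pnpsi(K_i)+\diff\theta'_i$ with $\theta'_i\in\Filt_{\SL(K_i)}\BNcx^{-1}(K_i)$; composing and inducting, $f_{\mathrm{st}}(\pnpsi(K))=\pm\pnpsi(\wt{K})+\diff\theta$ with $\theta\in\Filt_{s-2k}\BNcx^{-1}(\wt{K})$, the binding filtration being the least one that appears, $\SL(\wt{K})=s-2k$. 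Then \Theorem{filt-inv} gives $f_{\mathrm{iso}}(\pnpsi(\wt{K}))=\pm\pnpsi(\wt{K}')+\diff\phi$ with $\phi\in\Filt_{s-2k}\BNcx^{-1}(\wt{K}')$, and the destabilization half of \Proposition{neg-stab} gives, by the same induction, $f_{\mathrm{de}}(\pnpsi(\wt{K}'))=\pm\pnpsi(K')+\diff\eta$ with $\eta\in\Filt_{s-2k}\BNcx^{-1}(K')$. Composing the three and folding the (filtered) images of $\theta$ and $\phi$ together with $\eta$ into a single term yields $f(\pnpsi(K))=\pm\pnpsi(K')+\diff\theta$ with $\theta\in\Filt_{s-2k}\BNcx^{-1}(K')$, which is the ``that is'' assertion; it is local by \Proposition{local}, since the $\theta'_i,\eta_i$ of \Proposition{neg-stab} and the isotopy data of \Theorem{filt-inv} are. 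For the first assertion, note that $p\le -k$ forces $s+2p\le s-2k$, so $\Filt_{s-2k}\BNcx(K')\subseteq\Filt_{s+2p}\BNcx(K')$; hence $\diff\theta$ is a boundary in $\Filt_{s+2p}\BNcx(K')$, so $f(\pnpsi(K))$ and $\pm\pnpsi(K')$ represent the same class in $H^0(\Filt_{s+2p}\BNcx(K'))$, and since $f$ induces the canonical identification of these homology groups (the corollary following \Theorem{filt-inv}) we get $[\pnpsi_{p,\infty}(K)]=\pm[\pnpsi_{p,\infty}(K')]$.

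I do not expect a genuine obstacle: the content is entirely the bookkeeping of $\gr_q$-levels across the composite, so the care goes into checking that the boundary term introduced at each elementary step lands in $\Filt_{s-2k}$ or higher and into tracking accumulating $\pm$ signs (harmless, since everything is anyway defined only up to an overall sign). The one thing to watch is which filtration level each step contributes: the stabilizations contribute to $\Filt_{s-2},\Filt_{s-4},\dots,\Filt_{s-2k}$ and the destabilizations to $\Filt_{s-2k},\dots,\Filt_{s-2}$, so $s-2k$ is always the binding level; and the degenerate case $k=0$ is just \Theorem{filt-inv}.
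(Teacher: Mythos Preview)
Your proposal is correct and is exactly the argument the paper has in mind: the paper's own proof is the single sentence ``This is immediate from \Theorem{filt-inv} (which covers transverse isotopies) and \Proposition{neg-stab} (which covers negative stabilizations and destabilizations),'' and you have simply written out the filtration bookkeeping that this sentence entails. Your tracking of the binding filtration level $s-2k$ through the composite is accurate, and the remark about locality, while not part of the stated corollary, is harmless.
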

\begin{proof}
  This is immediate from Theorem~\ref{thm:filt-inv} (which covers
  transverse isotopies) and Proposition~\ref{prop:neg-stab} (which
  covers negative stabilizations and destabilizations).
\end{proof}

The following was proved by Rasmussen~\cite{Ras-kh-slice} in the case
of the Lee deformation; we will use it in Section~\ref{sec:flypes}:
\begin{corollary}\label{cor:Jake-preserved}
  Reidemeister moves preserve filtered Plamenevskaya generators up to
  homotopy. That is, let $K$ and $K'$ be diagrams for isotopic links
  and let $f\co \BNcx(K)\to \BNcx(K')$ be the map of Bar-Natan complexes
  induced by a sequence of Reidemeister moves connecting $K$ and
  $K'$. Then $f(\pnpsi(K))=\pm\pnpsi(K')+\diff \theta$ for some
  $\theta\in\BNcx^{-1}(K')$.
\end{corollary}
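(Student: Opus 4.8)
The plan is to reduce the statement to a move-by-move check. The essential point is that, unlike in Theorem~\ref{thm:filt-inv}, here we only claim invariance up to chain homotopy and an overall sign, with no control on the filtration level of $\theta$; so the bookkeeping is strictly lighter than what has already been done. Concretely, I would invoke the standard fact that any two diagrams of isotopic links are connected by a finite sequence of planar isotopies and the (oriented) Reidemeister moves of Figure~\ref{fig:Reid-moves}. Planar isotopy visibly preserves $\pnpsi$ on the nose, so it suffices to show that each of the moves RI${}^-$, RI${}^+$, braid-like RII, and braid-like RIII induces a map carrying $\pnpsi$ to $\pm\pnpsi$ modulo a boundary, and then to compose.

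For positive stabilization/destabilization and for the braid-like RII and RIII moves, I would simply point to the computations already carried out in the proof of Theorem~\ref{thm:filt-inv}: there one sees that the Plamenevskaya generator of the larger diagram lies in the surviving subquotient $\BNcx((K')_v)$ and is carried to the Plamenevskaya generator of the smaller diagram on the nose by the projection/inclusion ($\pi$ or $\iota$) used to build the invariance map; inverting $\pi$ or $\iota$ up to filtered homotopy then yields $f(\pnpsi)=\pm\pnpsi+\diff\theta$. For negative stabilization/destabilization, Proposition~\ref{prop:neg-stab} already records exactly the two identities needed, $g(\pnpsi(K'))=\pm\pnpsi(K)+\diff\theta$ and $f(\pnpsi(K))=\pm\pnpsi(K')+\diff\theta'$. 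Although Theorem~\ref{thm:filt-inv} and Proposition~\ref{prop:neg-stab} are phrased for closed braids, the relevant computations are local to the Reidemeister region — they only involve the labels of the oriented resolution near the move, via Proposition~\ref{prop:local} — so I would observe that they apply verbatim to an arbitrary link diagram. To finish, I would compose the maps associated to a sequence of moves: since each is a filtered chain map, $f_{i}(\diff\phi)=\diff(f_i\phi)$ with $f_i\phi\in\BNcx$, so the error terms accumulate and the signs multiply, giving the desired $\theta\in\BNcx^{-1}(K')$.

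I expect the only real friction to be the reduction of general (oriented) Reidemeister moves to the four moves of Figure~\ref{fig:Reid-moves}. This can be handled either by citing the well-known fact that these generate all oriented Reidemeister moves, or — as is already done for RIII in Section~\ref{subsec:r3} — by noting that the remaining variants are obtained from these using moves already handled (general RIII from braid-like RIII via RII), while for the other RII variants the same local cancellation argument goes through unchanged, since the oriented resolution near the move and the generator $\pnpsi$ behave identically regardless of the orientations of the two strands. This is pure bookkeeping with no new ideas; the substantive content is entirely contained in Theorem~\ref{thm:filt-inv} and Proposition~\ref{prop:neg-stab}.

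As a cross-check on the whole argument, I would also note the homology-level perspective, which is essentially Rasmussen's argument~\cite{Ras-kh-slice} in the Lee setting transported to the Bar-Natan deformation: $[\psi(o)]$ is the canonical generator of $H^*(\BNcx(K))$ attached to the orientation $o$ (Section~\ref{subsec:bar-natan}), the Reidemeister maps are chain homotopy equivalences, and they intertwine these canonical generators up to sign; lifting back to the chain level then gives the corollary.
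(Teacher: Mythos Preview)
Your proposal is correct and takes essentially the same approach as the paper. The paper's proof is a single sentence invoking Corollary~\ref{cor:pre-Jake-preserved}, whose proof in turn is exactly the move-by-move reduction to Theorem~\ref{thm:filt-inv} (for RI${}^+$, RII, RIII) and Proposition~\ref{prop:neg-stab} (for RI${}^-$) that you spell out; your extra remarks about locality and the variant Reidemeister moves are harmless elaborations not needed for the paper's intended application to closed braids.
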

\begin{proof}
  This follows immediately from
  Corollary~\ref{cor:pre-Jake-preserved}. Note that in general we do
  not have any control over the filtration level in which $\theta$ lies.
\end{proof}

\begin{corollary}\label{cor:destabilize-k-times}
If a transverse link $K$ can be negatively destabilized $k$ times,
then $[\pnpsi_{0,k}(K)]=0$.
\end{corollary}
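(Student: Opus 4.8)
The plan is to exhibit $\pnpsi_{0,k}(K)$ as a boundary in the subquotient complex $\Filt_{\SL(K)}\BNcx^\bullet(K)/\Filt_{\SL(K)+2k}\BNcx^\bullet(K)$, by comparing it with the filtered Plamenevskaya invariant of the destabilized link, which sits in strictly higher filtration. (We may assume $k\geq 1$, since otherwise the target subquotient is the zero complex and there is nothing to prove.)

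Since $[\pnpsi_{0,k}(K)]$ is a transverse invariant (the corollary to Theorem~\ref{thm:filt-inv}), it may be computed from any braid diagram for $K$, so I would fix a diagram for $K$ obtained as the $k$-fold negative stabilization of a braid diagram for a transverse link $K_0$; then $\SL(K_0)=\SL(K)+2k$. Let $f\co\BNcx(K_0)\to\BNcx(K)$ be the composite of the $k$ negative stabilization maps from Section~\ref{subsec:r1n}. Applying Proposition~\ref{prop:neg-stab} to each of these $k$ steps and composing — exactly the computation carried out in the proof of Corollary~\ref{cor:pre-Jake-preserved}, keeping track of filtration levels — yields
\[
f(\pnpsi(K_0))=\pm\pnpsi(K)+\diff\Theta
\]
for some $\Theta\in\Filt_{\SL(K)}\BNcx^{-1}(K)$. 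The only thing that really needs checking is that the successive error terms produced by Proposition~\ref{prop:neg-stab}, each of which lies in the filtration of the target of the corresponding stabilization, still assemble — after being pushed forward by the later, filtration-nondecreasing maps — into an element of $\Filt_{\SL(K)}\BNcx^{-1}(K)$.

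Next I would note that $\pnpsi(K_0)$ lies in $\Filt_{\SL(K_0)}\BNcx^0(K_0)=\Filt_{\SL(K)+2k}\BNcx^0(K_0)$, and since $f$ is a filtered map, $f(\pnpsi(K_0))\in\Filt_{\SL(K)+2k}\BNcx^0(K)$. Projecting the displayed identity to the subquotient complex $\Filt_{\SL(K)}\BNcx^\bullet(K)/\Filt_{\SL(K)+2k}\BNcx^\bullet(K)$, the left-hand side becomes $0$, while the image of $\pnpsi(K)$ is by definition $\pnpsi_{0,k}(K)$; hence $\pnpsi_{0,k}(K)=\mp\diff\overline{\Theta}$ is a boundary in this subquotient, where $\overline{\Theta}$ denotes the image of $\Theta$. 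Therefore $[\pnpsi_{0,k}(K)]=0$.

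I do not expect any genuine obstacle: the argument is just Proposition~\ref{prop:neg-stab} combined with the observation that the destabilized invariant lies in higher filtration. The one mildly delicate point is the filtration bookkeeping for the homotopy error term $\Theta$ through the composite $f$, but this is precisely the computation already performed in the proof of Corollary~\ref{cor:pre-Jake-preserved}, so it can simply be cited rather than redone.
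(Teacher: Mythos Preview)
Your proposal is correct and follows essentially the same argument as the paper: apply Proposition~\ref{prop:neg-stab} (iterated $k$ times) to get $f(\pnpsi(K_0))=\pm\pnpsi(K)+\diff\Theta$ with $\Theta\in\Filt_{\SL(K)}\BNcx^{-1}(K)$, observe that $f(\pnpsi(K_0))$ lies in $\Filt_{\SL(K)+2k}$ because $f$ is filtered, and project to the subquotient. Your discussion of the filtration bookkeeping for $\Theta$ under the composite is a bit more explicit than the paper's, but the two proofs are otherwise identical.
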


\begin{proof}
Let $K'$ be a transverse link so that $S^k(K')=K$. Let
$f\from\BNcx(K')\to\BNcx(K)$ be the filtered chain map corresponding
to $k$ negative stabilizations. Proposition~\ref{prop:neg-stab}
ensures the existence of some $\theta\in\Filt_{\SL(K)}\BNcx(K)$ so that
\[
f(\pnpsi(K'))=\pm\pnpsi(K)+\diff\theta.
\]
However, since $f$ is a filtered map, $f(\pnpsi(K'))$ lies in
$\Filt_{\SL(K')}\BNcx(K)=\Filt_{\SL(K)+2k}\BNcx(K)$. Therefore, letting
$\theta_{0,k}$ denote the image of $\theta$ in
$\Filt_{\SL(K)}\BNcx(K)/\Filt_{\SL(K)+2k}\BNcx(K)$, we have
\[
\pnpsi_{0,k}(K)=\mp\diff(\theta_{0,k})
\]
and hence $[\pnpsi_{0,k}(K)]=0$.
\end{proof}

\begin{remark}
Note that, by the $s$-invariant upper bound on self-linking number
\cite{Plam-06-KhTrans}, we get
\[
\SL(K')=\SL(K)+2k\leq s(K)-1=\SL(K)-2+2(\min\set{q}{[\pnpsi_{-\infty,q}(K)]\neq
  0}),
\]
or
\[
\min\set{q}{[\pnpsi_{-\infty,q}(K)]\neq 0}\geq k+1.
\]
Corollary~\ref{cor:destabilize-k-times} furnishes a (possibly)
stronger inequality:
\[
\min\set{q}{[\pnpsi_{0,q}(K)]\neq 0}\geq k+1.
\]
\end{remark}

We conclude this subsection with an observation regarding the original
Plamenevskaya invariant $\psi_{0,1}$ and the
property~(\ref{flype2}) from
Proposition~\ref{prop:transverse-equivalence-list}.

\begin{proposition}\label{prop:olgastabinv}
  Let $K$ and $K'$ be transverse links so that $S(K)=S(K')$. Fix a
  ring $R$ in which $2$ is invertible.  Let
  $f\from\KhCx(K;R)\to\KhCx(K';R)$ be the chain map on the Khovanov
  chain complexes over $R$ associated to an isotopy from $K$ to $K'$
  corresponding to a single negative stabilization, then a transverse
  isotopy, and then a single negative destabilization. Then
  $f(\psi_{0,1}(K))=\al\psi_{0,1}(K')+\diffKh(\phi)$ for some
  $\phi\in\KhCx^{-1,\SL(K)}(K';R)$ and some unit $\al$ in $R$.
\end{proposition}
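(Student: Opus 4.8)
The plan is to lift everything to the Bar-Natan complex over $R$, prove the statement there for the \emph{symmetric} combination $\ppsi+\npsi$, and then divide by $2$ and pass to the associated graded to recover $\psi_{0,1}$. Factor the given isotopy from $K$ to $K'$ as a negative stabilization $K\to S(K)$, a transverse isotopy from $S(K)$ to $S(K')$ (these are transversely isotopic by hypothesis, which in particular forces $\SL(K)=\SL(K')$), and a negative destabilization $S(K')\to K'$, and let $g_1,g_2,g_3$ be the corresponding filtered homotopy equivalences of Bar-Natan complexes over $R$, with composite $\hat f=g_3\circ g_2\circ g_1$. \Corollary{pre-Jake-preserved} (with $k=1$; it and the lemmas behind it are defined over $\Z$, hence over $R$) already gives $\hat f(\pnpsi(K))=\pm\pnpsi(K')+\diff\theta$ for some $\theta\in\Filt_{\SL(K)-2}\BNcx^{-1}(K';R)$; the filtration $\SL(K)-2$ is too low to control $\psi_{0,1}$, and the crux of the proof is to improve this to $\Filt_{\SL(K)}$ after symmetrizing.

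I would track $\ppsi(K)+\npsi(K)$ through the three maps. By \Remark{neg-stabilization-linear}, $g_1$ sends $\ppsi(K)+\npsi(K)$ to $\pm\bigl(\ppsi(S(K))-\npsi(S(K))\bigr)=\pm\dpsi(S(K))$ up to a boundary; moreover the two explicit local correction terms in the proof of \Proposition{neg-stab} attached to $\ppsi(S(K))$ and $\npsi(S(K))$ are alternating $x_-/\xbar$ labelings of the \emph{same} resolution with complementary patterns (reversing the orientation reverses all the labels), so they have the same lowest-filtration part, and this part cancels in their difference. Hence $g_1(\ppsi(K)+\npsi(K))=\pm\dpsi(S(K))+\diff\chi_1$ with $\chi_1\in\Filt_{\SL(S(K))+2}\BNcx^{-1}(S(K))=\Filt_{\SL(K)}\BNcx^{-1}(S(K))$. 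Next, $\dpsi(S(K))\in\Filt_{\SL(S(K))+2}=\Filt_{\SL(K)}$, and \Theorem{diff-is-invt} gives $g_2(\dpsi(S(K)))=\pm\dpsi(S(K'))+\diff\chi_2$ with $\chi_2\in\Filt_{\SL(K)}\BNcx^{-1}(S(K'))$. Finally, $g_3$ is by construction the filtered homotopy inverse of the negative stabilization $\BNcx(K';R)\to\BNcx(S(K');R)$, which by the same argument as for $g_1$ sends $\ppsi(K')+\npsi(K')$ to $\pm\dpsi(S(K'))$ modulo $\diff\bigl(\Filt_{\SL(K)}\BNcx^{-1}(S(K'))\bigr)$; applying the filtered null-homotopy of the composite $g_3\circ(\text{that stabilization})-\Id$ to the cycle $\ppsi(K')+\npsi(K')$ then yields $g_3(\dpsi(S(K')))=\pm(\ppsi(K')+\npsi(K'))+\diff\chi_3$ with $\chi_3\in\Filt_{\SL(K)}\BNcx^{-1}(K')$. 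Since each $g_i$ is a filtered chain map, composing these three computations produces
\[
\hat f(\ppsi(K)+\npsi(K))=\pm(\ppsi(K')+\npsi(K'))+\diff\Theta,\qquad \Theta\in\Filt_{\SL(K)}\BNcx^{-1}(K';R).
\]

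Now divide by $2$ — and this is the only place the hypothesis on $R$ is used: $\hat f\bigl(\tfrac12(\ppsi+\npsi)(K)\bigr)=\pm\tfrac12(\ppsi+\npsi)(K')+\diff\bigl(\tfrac12\Theta\bigr)$ with $\tfrac12\Theta\in\Filt_{\SL(K)}$. Apply the quotient map onto $\Filt_{\SL(K)}\BNcx(K';R)/\Filt_{\SL(K)+2}\BNcx(K';R)=\KhCx^{*,\SL(K)}(K';R)$. The $q$-degree-$\SL(K)$ part of $\tfrac12(\ppsi+\npsi)$ is $\psi_{0,1}$ (both $\ppsi$ and $\npsi$ label every circle by $x_-$ there), and since $\tfrac12\Theta$ lies in $\Filt_{\SL(K)}$ its image $\phi\in\KhCx^{-1,\SL(K)}(K';R)$ is an honest $\diffKh$-chain with $\diff$ reducing to $\diffKh$; the Khovanov chain map in the statement may be taken to be the associated graded of $\hat f$ (any other choice differs by a chain homotopy and $\psi_{0,1}(K)$ is a cycle, so this changes only $\phi$). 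Hence $f(\psi_{0,1}(K))=\pm\psi_{0,1}(K')+\diffKh(\phi)$, and $\al=\pm1$ is a unit. By \Remark{lee-deformation} the same conclusion then holds for the Lee deformation over $R$.

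The step I expect to require the most care is the claim that $\Theta$ can be taken in $\Filt_{\SL(K)}$ rather than only in $\Filt_{\SL(K)-2}$; this rests on the two leading-term cancellations in the two stabilization steps, which in turn use the precise description of the negative (de)stabilization maps from \Proposition{neg-stab} and the locality of all the maps involved (\Proposition{local}). Without this improvement one would only obtain $2f(\psi_{0,1}(K))=\pm 2\psi_{0,1}(K')+\diffKh(\phi)$, which is exactly why the statement is restricted to rings in which $2$ is invertible and is not known over $\Z$.
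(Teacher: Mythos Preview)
Your proof is correct, and it is genuinely different from the paper's.  The paper's argument is shorter and sidesteps the filtration improvement you fight for: it transfers Corollary~\ref{cor:pre-Jake-preserved} to the Lee deformation (via the Mackaay--Turner--Vaz twist equivalence, which is where $2^{-1}\in R$ enters), obtains $f(\psi_{\mathit{Lee}}(K))=\alpha\,\psi_{\mathit{Lee}}(K')+\delta_{\mathit{Lee}}\theta$ with $\theta$ only in $\Filt_{\SL(K)-2}$, and then uses that $\delta_{\mathit{Lee}}$ preserves the quantum grading modulo~$4$.  That mod-$4$ invariance means there is a chain quotient $\Filt_{\SL(K)-2}\LeeCx\twoheadrightarrow \Kh^{*,\SL(K)}$ (the grading-$\SL(K)$ piece simply does not interact with the grading-$\SL(K)-2$ piece under $\delta_{\mathit{Lee}}$), so the too-low filtration of $\theta$ is harmless.

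Your route stays entirely inside the Bar-Natan complex: you symmetrize to $\ppsi+\npsi$, observe that the explicit correction terms $\theta'_\pm$ from Proposition~\ref{prop:neg-stab} are complementary alternating $x_-/\xbar$ labelings whose leading all-$x_-$ parts cancel, feed $\dpsi(S(K))$ through Theorem~\ref{thm:diff-is-invt}, and thereby genuinely push $\Theta$ up into $\Filt_{\SL(K)}$ before passing to the associated graded; the invertibility of $2$ is then used only at the very end, to halve.  Your argument is more hands-on and leans on more of the earlier machinery (Theorem~\ref{thm:diff-is-invt}, Remark~\ref{rem:neg-stabilization-linear}, Proposition~\ref{prop:neg-stab}), but in exchange it avoids invoking the Lee complex and the twist-equivalence result entirely.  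One small comment: your closing sentence about Remark~\ref{rem:lee-deformation} is unnecessary, since the statement concerns the Khovanov complex itself, not either deformation.
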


\begin{proof}
Consider the Lee filtered chain complex $(\LeeCx,\diffLee)$ from
\cite{Lee-kh-endomorphism}. Let $\psi_{\mathit{Lee}}(K)$ be the
Lee generator corresponding the usual orientation of $K$; it is a
cycle lying in filtration $\Filt_{\SL(K)}\LeeCx(K)$, and its lowest
filtration term is also $\psi_{0,1}(K)$.

However, by~\cite[Proposition 2.2]{MTV-Kh-s-invts}, over the ring $R$,
the Bar-Natan filtered complex
$(\BNcx,\diff)$ is twist equivalent to the Lee filtered complex
$(\LeeCx,\diffLee)$ (see Remark~\ref{rem:lee-deformation}). Therefore,
over the ring $R$, Corollary~\ref{cor:pre-Jake-preserved} also holds
for the Lee complex. That is, we have
\[
f(\psi_{\mathit{Lee}}(K))=\al\psi_{\mathit{Lee}}(K')+\diffLee(\theta)
\]
for some $\theta\in\Filt_{\SL(K)-2}\LeeCx(K')$ and some unit $\al\in
R$.

Since $\diffLee$ preserves the quantum
grading mod $4$, there is a quotient map
\[
\Filt_{\SL(K)-2}\LeeCx\onto\Kh^{*,\SL(K)}.
\]
Letting $\phi\in\Kh^{-1,\SL(K)}$ denote the image of $\theta$ under
this quotient map, we get the desired identity:
\[
f(\psi_{0,1}(K))=\al\psi_{0,1}(K')+\diffKh(\phi).\qedhere
\]
\end{proof}

\begin{remark}\label{rmk:filtered}
Although both the original Plamenevskaya invariant $\psi_{0,1}$ and
the filtered version $\psi$ are invariant under negative flypes (see
Theorem~\ref{thm:flype-inv} below), Proposition~\ref{prop:olgastabinv}
suggests one way in which we might be hopeful that the filtered
invariant could be effective even if the original invariant were
not. To our knowledge, it is conceivable that there could be
transverse knots that
become the same after one stabilization, but are not related by a
sequence of negative flypes; compare conditions (\ref{flype1}) and
(\ref{flype2}) in
Proposition~\ref{prop:transverse-equivalence-list}. Such knots are
indistinguishable using $\psi_{0,1}$ by
Proposition~\ref{prop:olgastabinv}, but could possibly be
distinguished using the filtered invariant $\psi$.
\end{remark}

\subsection{Invariance under negative flypes}\label{sec:flypes}
In this subsection we prove that the filtered Plamenevskaya invariant
is unchanged by negative flypes (Theorem~\ref{thm:flype-inv}), and
that the difference invariant $\dpsi$ is unchanged by a subclass of
negative flypes (Theorem~\ref{thm:diff-flype-inv}).

\begin{theorem}\label{thm:flype-inv}
  The filtered Plamenevskaya invariant is invariant under negative
  flypes. That is, if $K$ and $K'$ are related by a negative flype and
  $f\co \BNcx(K)\to\BNcx(K')$ is the map associated to the sequence of
  Reidemeister moves~\eqref{eq:flype-moves} then
  $f(\pnpsi(K))=\pm\pnpsi(K')+\diff \phi$ (for one choice of $+$ or $-$), where
  $\phi\in\Filt_{\SL(K)}\BNcx(K')$.
\end{theorem}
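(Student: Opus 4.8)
The plan is to use the decomposition of the negative flype into Reidemeister moves recorded in \eqref{eq:flype-moves}, together with the invariance statements already established. Write $K=K_0,K_1,\dots,K_8=K'$ for the successive braid diagrams appearing in \eqref{eq:flype-moves} and $f=f_8\circ\cdots\circ f_1$ for the corresponding composite of Bar-Natan maps. Six of the eight moves --- the two Reidemeister~II moves, the two braid-relation Reidemeister~III moves, and the braid conjugation (itself realized by braid-like Reidemeister~II and~III moves) --- are transverse Markov moves; for each, the argument in the proof of \Theorem{filt-inv} applies to the relevant intermediate diagram and shows that $f_i$ carries $\pnpsi(K_{i-1})$ to $\pm\pnpsi(K_i)+\diff\phi_i$ with $\phi_i$ in filtration $\SL(K_i)$, both $f_i$ and $\phi_i$ local in the sense of \Proposition{local}. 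Along \eqref{eq:flype-moves} the self-linking number equals $s:=\SL(K)$ except strictly between the negative stabilization and the negative destabilization, where it is $s-2$, so these six defects lie in $\Filt_s$ or $\Filt_{s-2}$ accordingly. The remaining two moves, the negative braid stabilization $f_2$ and the negative braid destabilization $f_7$, are governed by \Proposition{neg-stab}: each carries $\pnpsi$ to $\pm\pnpsi$ up to a local boundary, but the filtration bound it supplies is only $\Filt_{s-2}$.

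Composing these identities, and using that every Reidemeister map is filtered, gives $f(\pnpsi(K))=\pm\pnpsi(K')+\diff\phi$ with $\phi$ local and, a priori, only in $\Filt_{s-2}\BNcx^{-1}(K')$. The theorem demands $\phi\in\Filt_s\BNcx^{-1}(K')$, and closing this two-step gap is the crux of the proof.

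For the gap, note first that the error $e:=f(\pnpsi(K))\mp\pnpsi(K')$ automatically lies in $\Filt_s\BNcx^0(K')$, since $f$ is filtered and both generators lie in $\Filt_s$; hence the sub-$\Filt_s$ parts of the various defects must cancel within $e$, and the sole obstruction to choosing $\phi$ itself in $\Filt_s$ is the class of the image of $\phi$ in $H^{-1}\bigl(\Filt_{s-2}\BNcx(K')/\Filt_s\BNcx(K')\bigr)\cong \Kh^{-1,s-2}(K')\oplus\Kh^{-1,s-1}(K')$ (only one summand nonzero, by parity). I would kill this class by a local argument: by \Proposition{local}, $\phi$ and $e$ are supported near the flype tangle, where the offending low-filtration contributions come precisely from the alternating-label defect $\theta'$ produced by $f_2$ and the matching defect produced by $f_7$; since $\theta'$ is supported on the stabilizing strand and the intervening braid-relation and conjugation maps are local and keep it within a bounded region, one can track it to $f_7$ and check that the two low-filtration defects cancel at the chain level modulo a boundary in $\Filt_s$, so that $\phi$ can be adjusted into $\Filt_s$. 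I expect this local cancellation --- morally, that the negative stabilization and negative destabilization in \eqref{eq:flype-moves} are inverse operations on the same strand --- to be the main obstacle; the decomposition and the bookkeeping of self-linking numbers and filtrations should be routine given \Theorem{filt-inv}, \Proposition{neg-stab}, and \Proposition{local}.
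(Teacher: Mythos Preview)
Your setup is sound: decomposing the flype via \eqref{eq:flype-moves} and applying \Theorem{filt-inv} and \Proposition{neg-stab} to the pieces does give $f(\pnpsi(K))=\pm\pnpsi(K')+\diff\phi$ with $\phi$ local and in $\Filt_{\SL(K)-2}\BNcx^{-1}(K')$, and you correctly identify that the remaining task is upgrading $\phi$ to $\Filt_{\SL(K)}$.

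The gap is that your proposed ``local cancellation'' is never carried out. You assert that the defect $\theta'$ from the negative stabilization $f_2$ can be tracked through four intermediate moves and shown to cancel against the defect from the destabilization $f_7$, but the intervening maps (two Reidemeister~III moves, a commutation, a conjugation) act nontrivially on the region supporting $\theta'$, and you give no mechanism for verifying the match. You yourself flag this as ``the main obstacle,'' and without it the argument is incomplete.

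The paper sidesteps this tracking entirely with a simpler observation that you are one step away from. Locality (\Proposition{local}) already forces $\phi$ to lie over the oriented-resolution vertex $u$ of the $A\cup B$ crossings; combined with $\gr_h(\phi)=-1$, this determines the weight of the remaining $|k|+1$ coordinates (all zeros if $k\geq 0$, exactly one zero if $k<0$). At each such vertex $v$ the resolution $(K')_v$ has exactly $m$ circles (one fewer than the braid index $m+1$), so the \emph{minimum} quantum grading occurring over $v$ is
\[
n_+-2n_-+|v|-m \;=\; n_+-2n_-+(n_--1)-m \;=\; \SL(K).
\]
Hence every chain in $\BNcx^{-1}(K')$ satisfying the locality and homological-grading constraints already lies in $\Filt_{\SL(K)}$: there is literally nothing below $\Filt_{\SL(K)}$ for $\phi$ to occupy, and no cancellation argument is needed. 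This is the content of \Lemma{live-over} and \Lemma{q-gr-min}. So rather than tracking the individual $\theta'$ terms, you should exploit the locality you already have to pin down the cube vertices supporting $\phi$ and then simply count circles there.
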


\begin{corollary}
  The filtered Plamenevskaya invariant is invariant under flype and
  $\SZ$ equivalence.
\end{corollary}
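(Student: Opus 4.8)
The plan is to deduce the corollary formally from Theorem~\ref{thm:flype-inv}, Theorem~\ref{thm:filt-inv}, and Proposition~\ref{prop:flype-SZ}, so that all the real content sits in those earlier results. Since Proposition~\ref{prop:flype-SZ} identifies flype equivalence with $\SZ$ equivalence, it suffices to prove invariance under flype equivalence. Using the reformulation of flype equivalence recorded just after Definition~\ref{def:flype-eq}, if transverse links $K$ and $K'$ are flype-equivalent then they admit closed-braid diagrams $K = K_0, K_1, \dots, K_n = K'$ in which each consecutive pair $(K_{i-1}, K_i)$ is related by a braid conjugation, a positive braid (de)stabilization, or a negative flype. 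All three of these moves preserve the self-linking number of the braid closure, so $\SL(K_0) = \SL(K_1) = \dots = \SL(K_n) =: s$, and I would work with this single filtration level throughout.

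Next I would handle each move type. A braid conjugation or a positive braid (de)stabilization is a transverse Markov move, so the closures of $K_{i-1}$ and $K_i$ are transversely isotopic and Theorem~\ref{thm:filt-inv} provides a filtered homotopy equivalence $f_i \co \BNcx(K_{i-1}) \to \BNcx(K_i)$ with $f_i(\pnpsi(K_{i-1})) = \pm\pnpsi(K_i) + \diff\phi_i$ for some $\phi_i \in \Filt_s \BNcx^{-1}(K_i)$. When $(K_{i-1}, K_i)$ is related by a negative flype, Theorem~\ref{thm:flype-inv} gives a map $f_i$ (namely the one associated to the move sequence~\eqref{eq:flype-moves}) satisfying exactly the same conclusion. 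The one feature I need beyond these identities is that each $f_i$ is a filtered chain map, hence carries $\Filt_s \BNcx(K_{i-1})$ into $\Filt_s \BNcx(K_i)$.

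Finally I would compose. For $f = f_n \circ \cdots \circ f_1 \co \BNcx(K) \to \BNcx(K')$ I would prove by induction on $i$ that $(f_i \circ \cdots \circ f_1)(\pnpsi(K)) = \pm\pnpsi(K_i) + \diff\phi^{(i)}$ for some $\phi^{(i)} \in \Filt_s \BNcx^{-1}(K_i)$: applying $f_{i+1}$ to the identity for $i$, using that $f_{i+1}$ is a chain map together with the identity for the next move, gives $\pm\pnpsi(K_{i+1}) + \diff\bigl(\pm\phi_{i+1} + f_{i+1}(\phi^{(i)})\bigr)$, and both $\phi_{i+1}$ and $f_{i+1}(\phi^{(i)})$ lie in $\Filt_s$, the latter because $f_{i+1}$ does not decrease the $q$-filtration. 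With $i = n$ this reads $f(\pnpsi(K)) = \pm\pnpsi(K') + \diff\phi$ with $\phi \in \Filt_{\SL(K)} \BNcx^{-1}(K')$, which is the asserted invariance of $\pnpsi$ under flype and $\SZ$ equivalence; it also immediately yields invariance of the auxiliary classes $[\pnpsi_{p,q}]$. The only step requiring any care is this filtration bookkeeping --- checking that each elementary move fixes $\SL$ and induces a filtered map, so that the accumulated homotopy never drops below filtration $s$ --- and granting Theorem~\ref{thm:flype-inv} and Proposition~\ref{prop:flype-SZ} there is no substantive obstacle.
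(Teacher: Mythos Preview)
Your proposal is correct and follows exactly the approach the paper intends: the corollary is stated without proof and is meant to follow immediately from Theorem~\ref{thm:flype-inv} (negative flypes), Theorem~\ref{thm:filt-inv} (transverse Markov moves), and Proposition~\ref{prop:flype-SZ} (identifying flype and $\SZ$ equivalence). You have simply spelled out the composition argument and filtration bookkeeping that the paper leaves implicit.
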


Before proving Theorem~\ref{thm:flype-inv}, we fix some notation.
For the rest of the subsection, fix braids $A$ and $B$ as in
Definition~\ref{def:flype} and let $K$ (respectively, $K'$) be the
braid closure of $A\sigma_m^{k}B\sigma_m^{-1}$ (respectively,
$A\sigma_m^{-1}B\sigma_m^{k}$).

Let $n+|k|+1$ be the number of crossings of $K$. Order the crossings
of $K$ and $K'$ so that the first $n$ crossings lie in $A\cup B$ and
the $(n+1)^{\st}$ crossing is $\sigma_m^{-1}$. Let $u\in\{0,1\}^n$
correspond to the oriented resolution of $A\cup B$; that is, $u$ is
$0$ at each $\sigma_i$ and $1$ at each $\sigma_i^{-1}$.

Let $f\co\BNcx(K)\to\BNcx(K')$ be the filtered chain homotopy
equivalence induced by the sequence of moves~(\ref{eq:flype-moves}). By Corollary~\ref{cor:Jake-preserved},
\[
f(\pnpsi(K))=\pm\pnpsi(K') + \diff\phi
\]
for some $\phi\in\BNcx^{-1}(K')$. We want to show that
$\phi\in\Filt_{\SL(K)}\BNcx(K')$.

\begin{lemma}\label{lem:live-over}
  With the notations from above, the following statements hold.
  \begin{enumerate}
  \item If $k\geq 0$, the elements $\pnpsi(K)$ and $\pnpsi(K')$ lie
    over the vertex $u\times(1,0,0,\dots,0)\in\{0,1\}^{n+|k|+1}$;
    otherwise, they lie over the vertex
    $u\times(1,1,1,\dots,1)\in\{0,1\}^{n+|k|+1}$.
  \item If $k\geq 0$, the element $f(\pnpsi(K))$ lies over the
    vertices $u\times(\epsilon_1,\dots,\epsilon_{|k|+1})$ where exactly
    one of the $\epsilon_i$ is $1$ (and the rest are $0$); otherwise,
    it lies over the vertex $u\times (1,\dots,1)$.
  \item If $k\geq 0$, the element $\phi$ lies over the vertex
    $u\times(0,0,0,\dots,0)\in \{0,1\}^{n+|k|+1}$; otherwise, it lies
    over the vertices $u\times (\epsilon_1,\dots,\epsilon_{|k|+1})$
    where exactly one of the $\epsilon_i$ is $0$ (and the rest are
    $1$).
  \end{enumerate}
\end{lemma}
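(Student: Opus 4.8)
The plan is to track the cube coordinates of everything in sight through the sequence of moves~\eqref{eq:flype-moves}, using the locality statement of Proposition~\ref{prop:local} at each step. First I would establish part~(1) directly. The generators $\pnpsi(K)$ and $\pnpsi(K')$ are, by definition, the Bar--Natan--Lee--Rasmussen--Turner generators attached to the oriented resolution of the respective braid closures. The first $n$ crossings lie in $A\cup B$ and are resolved according to $u$; the remaining $|k|+1$ crossings are all copies of $\sigma_m$ (if $k\geq 0$) together with one $\sigma_m^{-1}$, or a block of $\sigma_m^{-1}$'s together with $\sigma_m^{k}$ when $k<0$ (and similarly for $K'$, with the roles of $\sigma_m$ and $\sigma_m^{-1}$ swapped). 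Each positive crossing $\sigma_m$ is resolved by its $0$-resolution in the oriented resolution, and each negative crossing $\sigma_m^{-1}$ by its $1$-resolution, by our conventions (Figure~\ref{fig:resolve-crossing}) and the normalization~\eqref{eq:h-gr} that makes $\gr_h(\pnpsi)=0$. Ordering the last $|k|+1$ crossings so that the lone $\sigma_m^{-1}$ comes first in both $K$ and $K'$ then gives exactly the vertex $u\times(1,0,\dots,0)$ when $k\geq 0$, and $u\times(1,1,\dots,1)$ when $k<0$; this is a bookkeeping matter once the crossing signs are sorted out.

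Next I would prove~(2). We apply Proposition~\ref{prop:local} with the flype region as the tangle $T_1$ (containing exactly the last $|k|+1$ crossings, which is where all the action in~\eqref{eq:flype-moves} takes place) and $K\setminus T_1=K'\setminus T_2=A\cup B$ with the shared ordering. Proposition~\ref{prop:local} says that $f((u,v),x)$ lies over the sub-cube $\{u\}\times\{0,1\}^{n_2}$; so the $A\cup B$-coordinate of $f(\pnpsi(K))$ is still $u$, and we only need to pin down the last $|k|+1$ coordinates. For $k\geq 0$ this is where the Reidemeister moves in~\eqref{eq:flype-moves} actually act: a negative stabilization, a braid relation (Reidemeister III), a conjugation, another braid relation, and a negative destabilization, each moving a single $\sigma_m^{\pm 1}$ strand across. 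I would trace $\pnpsi(K)$ — which lives at $(1,0,\dots,0)$ — through each of the explicit maps of Sections~\ref{subsec:r1n}--\ref{subsec:r3}, observing that the negative (de)stabilization map and the Reidemeister~III map are built from the inclusions/projections of Lemma~\ref{lem:cancellation} and hence, by the locality of those cancellations, move the support only within the relevant sub-cube; the net effect is that the image is spread over exactly those vertices with a single $1$ among the $|k|+1$ flype-coordinates. For $k<0$ the flype region is resolved entirely by $1$'s in both the oriented resolution and its image, and locality pins $f(\pnpsi(K))$ at $u\times(1,\dots,1)$.

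Finally, part~(3) follows by combining~(1) and~(2) with the identity $f(\pnpsi(K))=\pm\pnpsi(K')+\diff\phi$ from Corollary~\ref{cor:Jake-preserved}. We compute $\diff\phi=f(\pnpsi(K))\mp\pnpsi(K')$; by~(1) and~(2) the right-hand side is supported, in the flype-coordinates, on the vertices with at most one $1$ (when $k\geq 0$) or at most one $0$ (when $k<0$). Since the Bar--Natan differential strictly increases the weight $|v|$ of the cube vertex, and since $\phi$ is chosen local (so its $A\cup B$-coordinate is $u$), the only way $\diff\phi$ can have the stated support is if $\phi$ itself lives one weight-level lower: at $u\times(0,\dots,0)$ when $k\geq 0$, and over the vertices $u\times(\epsilon_1,\dots,\epsilon_{|k|+1})$ with exactly one $\epsilon_i=0$ when $k<0$. (One should note here that $\phi$ can be taken local because $f$ is, by the last clause of Corollary~\ref{cor:Jake-preserved} traced back through Theorem~\ref{thm:filt-inv} and Proposition~\ref{prop:neg-stab}.) The main obstacle I anticipate is part~(2) in the $k\geq 0$ case: one must verify carefully that the Reidemeister~III and negative-(de)stabilization maps, while they do spread a generator across a sub-cube, do not push $f(\pnpsi(K))$ onto a vertex with two or more $1$'s among the flype coordinates — this requires being precise about which arrows are cancelled in Lemma~\ref{lem:cancellation} at each step and using that the cancellations, hence the homotopy inverses, are supported over single sub-cubes.
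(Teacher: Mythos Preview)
Your overall plan---use locality to restrict to the sub-cube $\{u\}\times\{0,1\}^{|k|+1}$ and then pin down the remaining coordinates---matches the paper, and your treatment of part~(1) is fine.  But for part~(2) you are working much harder than necessary.  The paper does not trace $\pnpsi(K)$ through the individual Reidemeister maps at all; instead it observes that $f$ is a chain map and therefore preserves the homological grading $\gr_h(v,x)=-n_-+|v|$.  Once locality forces $f(\pnpsi(K))$ to lie over $\{u\}\times\{0,1\}^{|k|+1}$, preservation of $\gr_h$ forces the weight of the last $|k|+1$ coordinates to equal the weight of the corresponding coordinates of $\pnpsi(K)$, namely $1$ (if $k\geq 0$) or $|k|+1$ (if $k<0$).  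This one-line grading argument replaces the entire move-by-move chase you propose, and in particular dissolves the ``main obstacle'' you flag at the end.

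Your part~(3) is essentially the paper's argument, but again it can be said more directly: locality puts $\phi$ over $\{u\}\times\{0,1\}^{|k|+1}$, and since $\phi$ has homological grading $-1$ (one less than $\pnpsi(K')$), the weight of its last $|k|+1$ coordinates must be one less than that of $\pnpsi(K')$.  You do not need to pass through $\diff\phi$ and the support of $f(\pnpsi(K))\mp\pnpsi(K')$.
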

\begin{proof}
  The first part of the statement is immediate from the definitions:
  the oriented resolution corresponds to taking the $1$-resolution at
  each $\sigma_i$ and the $0$-resolution at each $\sigma_i^{-1}$. The
  second follows from the first part, locality of the map $f$
  (Proposition~\ref{prop:local}) and the fact that $f$ respects the
  homological grading. The third part follows from locality
  (Proposition~\ref{prop:local}) and the fact that $\phi$ lies in
  homological grading $1$ lower than $\pnpsi(K')$.
\end{proof}

\begin{lemma}\label{lem:q-gr-min}
  If $k\geq 0$, then over the vertex $u\times(0,0,\dots,0)$,
  $\Filt_{\SL(K)}\BNcx$ is all of $\BNcx$; that is, if $(v,x)\in\BNcx$
  with $v=u\times(0,0,0,\dots,0)$, then $\gr_q(v,x)\geq
  \SL(K)$. Similarly, if $k<0$, and if $(v,x)\in\BNcx$ with $v=u\times
  (\epsilon_1,\dots,\epsilon_{|k|+1})$ where exactly one of the
  $\epsilon_i$ is $0$, then $\gr_q(v,x)\geq\SL(K)$.
\end{lemma}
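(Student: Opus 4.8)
The plan is to bound $\gr_q(v,x)$ from below directly from formula~\eqref{eq:q-gr}. If $c$ denotes the number of circles in the resolution $K_v$, then for any labeling $x$ of those circles one has $\#\{Z\mid x(Z)=x_+\}-\#\{Z\mid x(Z)=x_-\}\geq -c$, with equality when every circle is labeled $x_-$; hence $\gr_q(v,x)\geq n_+-2n_-+|v|-c$. It therefore suffices to compute $|v|$ and to show $c\leq m$. Throughout I would keep in mind that $\SL(K)=w-(m+1)$, where $w=n_+-n_-$ is the writhe of $A\sigma_m^kB\sigma_m^{-1}\in B_{m+1}$ and $m+1$ is its braid index; that the oriented resolution $K_o$ (at the vertex $o$ identified in~\Lemma{live-over}) consists of exactly the $m+1$ pairwise disjoint circles $C_1,\dots,C_{m+1}$ coming from the braid strands; and that $|o|=n_-$ (the oriented resolution takes the $1$-resolution at precisely the negative crossings of $K$).

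First I would do the bookkeeping. In both cases of the statement, the vertex $v$ agrees with $o$ except at a single coordinate, at which $o$ has entry $1$ and $v$ has entry $0$: when $k\geq 0$ this is the coordinate at position $n+1$ (as $o=u\times(1,0,\dots,0)$ and $v=u\times(0,\dots,0)$), and when $k<0$ it is the coordinate at the position where $\epsilon_j=0$ (as $o=u\times(1,1,\dots,1)$). Consequently $|v|=|o|-1=n_--1$. The crucial observation is that in every case this coordinate labels a crossing of the form $\sigma_m^{\pm1}$, i.e.\ a crossing between strands $m$ and $m+1$: for $k\geq 0$ it is the displayed $\sigma_m^{-1}$, while for $k<0$ every crossing at positions $n+1,\dots,n+|k|+1$ is a copy of $\sigma_m^{-1}$.

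Then comes the heart of the matter: showing $c=m$. In $K_o$ the two arcs passing through the crossing region of the distinguished $\sigma_m^{\pm1}$ crossing lie on the circles $C_m$ and $C_{m+1}$, which are distinct. Passing from $K_o$ to $K_v$ alters the resolution at this one crossing and at no other, which reconnects precisely these two arcs; since they lie on distinct circles, the effect is to merge $C_m$ and $C_{m+1}$ into a single circle, leaving all the others untouched. Hence $c=(m+1)-1=m$, and combining this with $|v|=n_--1$ gives
\[
\gr_q(v,x)\ \geq\ n_+-2n_-+(n_--1)-m\ =\ (n_+-n_-)-(m+1)\ =\ \SL(K),
\]
as claimed. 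The only real content here is the circle count; the step I expect to require the most care is the case analysis verifying that the switched coordinate always corresponds to a $\sigma_m^{\pm1}$ crossing, since once that is established the ``merge rather than split'' conclusion is automatic — the circles of the oriented resolution of a closed braid are pairwise disjoint, so at any crossing the two local arcs of $K_o$ belong to two different circles. I do not anticipate any genuine difficulty beyond this bookkeeping.
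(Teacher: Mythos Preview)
Your proof is correct and follows essentially the same approach as the paper: minimize $\gr_q$ by labeling every circle $x_-$, note that $|v|=n_--1$, and count $m$ circles in $K_v$ to plug into Formula~\eqref{eq:q-gr}. The paper simply asserts the circle count and the weight, whereas you supply the (correct) justification by comparing $v$ to the oriented resolution $o$ and observing that the single coordinate where they differ is always a $\sigma_m^{\pm1}$ crossing, so the effect is a merge of $C_m$ and $C_{m+1}$.
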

\begin{proof}
  The minimal quantum grading occurs when all of the circles are
  decorated by $x_-$. In each of the above cases, there are $m$
  circles in $K_v$ (here $(m+1)$ is the braid index). The weight $|v|$ is
  $n_--1$, so Formula~\eqref{eq:q-gr} gives
  \[
  \gr_q(v,(x_-,\dots,x_-))=n_+-2n_-+n_--1-m=n_+-n_--(m+1)=\SL(K),
  \]
  as claimed.
\end{proof}

\begin{proof}[Proof of Theorem~\ref{thm:flype-inv}]
  As noted earlier, the element $\phi$ is given by
  Corollary~\ref{cor:Jake-preserved}; we must show that
  $\phi\in\Filt_{\SL(K)}\BNcx(K')$.  But this follows from the third
  part of Lemma~\ref{lem:live-over} in conjunction with
  Lemma~\ref{lem:q-gr-min}.
\end{proof}

The story for the invariant $\dpsi$ is slightly subtler. The flype
isotopy from Formula~\eqref{eq:flype-moves} involves a single negative
stabilization followed by a transverse isotopy followed by a single
negative destabilization. Up to an overall sign, and relative
boundary, the negative stabilization sends $\ppsi-\npsi$ to
$\ppsi+\npsi$ (Remark~\ref{rem:neg-stabilization-linear}), the
transverse isotopy preserves $\ppsi+\npsi$
(Remark~\ref{rem:linear-combinations}), and the negative
destabilization sends $\ppsi+\npsi$ to $\ppsi-\npsi$. Therefore,
\[
f(\dpsi(K))=\pm\dpsi(K')+\diff\phi
\]
for some $\phi\in\BNcx^{-1}(K')$. Furthermore, as in the proof of
Theorem~\ref{thm:flype-inv}, the homological grading and locality of
$\phi$ force $\phi$ to lie in filtration level
$\Filt_{\SL(K)}\BNcx(K')$.  This shows that the invariant
$\dpsi_{0,\infty}$ is preserved under a general flype. However, for
special types of flypes, more can be said.

\begin{theorem}\label{thm:diff-flype-inv}
  With braids $A$ and $B$ as in Definition~\ref{def:flype}, consider
  the flype isotopy between the braid closures $K$ and $K'$ of
  $A\si_m^k B\si_m^{-1}$ and $A\si_m^{-1}B\si_m^k$. Furthermore,
  assume $k\geq 0$. If $f\from \BNcx(K)\to\BNcx(K')$ is the map
  associated to such a flype, then $f(\dpsi(K))=\pm\dpsi(K')+\diff
  \phi$ (for one choice of $+$ or $-$), where
  $\phi\in\Filt_{\SL(K)+2}\BNcx(K')$.
\end{theorem}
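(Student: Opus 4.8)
The plan is to re-run the filtration-and-locality bookkeeping that proved Theorem~\ref{thm:flype-inv} and then to gain the extra unit of $q$-filtration by passing to the associated graded complex at the bottom $q$-grading. Throughout, assume $k\geq 0$, keep the notation of Section~\ref{sec:flypes} (so $u\in\{0,1\}^n$ is the oriented resolution of $A\cup B$, and the oriented resolution of $K'$ is the vertex $u\times(1,0,\dots,0)$), and write $w_0:=u\times(0,0,\dots,0)$.

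I would first collect what is already in hand. From the discussion preceding the theorem, $f(\dpsi(K))=\pm\dpsi(K')+\diff\phi$ for a \emph{local} $\phi\in\BNcx^{-1}(K')$, and the argument that proves Lemma~\ref{lem:live-over}(3) goes through word for word with $\dpsi$ in place of $\pnpsi$: locality forces $\phi$ to be supported over vertices of the form $u\times w'$, and since $\phi$ sits one homological grading below the generators of $\dpsi(K')$ (which live over $u\times(1,0,\dots,0)$), the hypothesis $k\geq 0$ leaves $w_0$ as the only such vertex, so $\phi$ is supported over $w_0$. By Lemma~\ref{lem:q-gr-min}, $K_{w_0}$ has exactly $m$ circles and every generator over $w_0$ has $q$-grading $\geq\SL(K)$; as the $q$-grading is constant modulo $2$ over a fixed vertex, the unique generator over $w_0$ in $q$-grading $\SL(K)$ is the one, call it $e_0$, labeling all $m$ circles by $x_-$, while every other generator over $w_0$ has $q$-grading $\geq\SL(K)+2$. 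Writing $\phi=c\,e_0+\phi''$ with $c\in\ZZ$ and $\phi''\in\Filt_{\SL(K)+2}\BNcx(K')$, the content of the theorem is exactly that $c=0$.

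To prove $c=0$, I would pass to the lowest $q$-grading. Since $\dpsi(K)\in\Filt_{\SL(K)+2}\BNcx(K)$ and $f$ is a filtered chain map, $f(\dpsi(K))\in\Filt_{\SL(K)+2}\BNcx(K')$; likewise $\dpsi(K')\in\Filt_{\SL(K)+2}\BNcx(K')$ because $\SL(K')=\SL(K)$. Hence $\diff\phi=f(\dpsi(K))\mp\dpsi(K')\in\Filt_{\SL(K)+2}\BNcx(K')$, so the homogeneous component of $\diff\phi$ in $q$-grading $\SL(K)$ is zero. On the other hand, decomposing $\diff$ into its $q$-grading-preserving part $\diffKh$ (the Khovanov differential on the associated graded $\KhCx(K')$) and its strictly $q$-grading-increasing parts, and using that $\phi$ has no component below $q$-grading $\SL(K)$, the $q$-grading-$\SL(K)$ component of $\diff\phi$ equals $c\,\diffKh(e_0)$ (the part $\phi''$ can contribute only in $q$-grading $\geq\SL(K)+2$). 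Therefore $c\,\diffKh(e_0)=0$ in $\KhCx^{0,\SL(K)}(K')$.

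Finally I would check that $\diffKh(e_0)\neq 0$, which forces $c=0$ and finishes the proof. The vertex $w_0$ differs from the oriented resolution $u\times(1,0,\dots,0)$ only in the coordinate of the crossing $\sigma_m^{-1}$, so there is exactly one cube edge from $w_0$ to it, and no other edge out of $w_0$ has that vertex as target. Since the oriented resolution of a braid closure on $m+1$ strands consists of $m+1$ disjoint circles while $K_{w_0}$ has $m$ circles, this edge is a split, i.e.\ comultiplication; applying $\Delta(x_-)=x_-\otimes x_-$ to the split circle and leaving the remaining circles labeled $x_-$ shows the corresponding component of $\diffKh(e_0)$ to be $\pm$ the all-$x_-$ generator over the oriented resolution of $K'$ — the lowest-filtration term of $\pnpsi(K')$, in particular nonzero. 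Hence $\diffKh(e_0)\neq 0$, so $c=0$ and $\phi=\phi''\in\Filt_{\SL(K)+2}\BNcx(K')$, as required. The main obstacle is the second step — using locality together with the homological grading to pin $\phi$ to the single vertex $w_0$; this is precisely where $k\geq 0$ is needed, and for $k<0$ the element $\phi$ is spread over several vertices whose bottom ($q$-grading $\SL(K)$) generators can be $\diffKh$-cycles, so this argument would not apply as stated.
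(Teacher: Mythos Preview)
Your proposal is correct and follows essentially the same approach as the paper: isolate $\phi$ over the single vertex $w_0=u\times(0,\dots,0)$ via locality and homological grading, write $\phi=c\,e_0+\phi''$ with $e_0$ the all-$x_-$ generator, and kill $c$ by observing that $\diff\phi\in\Filt_{\SL(K)+2}$ while $\diff e_0$ has nonzero coefficient on $\psi_{0,1}(K')$. Your passage to the associated graded and explicit identification of the split edge is exactly the computation the paper summarizes by $\langle\diff\phi_0,\psi_{0,1}\rangle=\pm 1$.
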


\begin{proof}
  From the discussion above, we see that
  $f(\dpsi(K))=\pm\dpsi(K')+\diff\phi$ for some
  $\phi\in\Filt_{\SL(K)}\BNcx^{-1}(K')$. Indeed, reusing earlier
  notations and proofs, we see that the homological grading and
  locality of $\phi$ forces it to lie over the vertex
  $v=u\times(0,0,\dots,0)$. Let $x$ be the labeling that labels each
  of the $m$ circles in $(K')_v$ by $x_-$. Then $\phi_0=(v,x)$ is the
  unique generator over $v$ that lies in quantum grading
  $\SL(K)$. Therefore, $\phi$ can be written as $\al\phi_0+\phi_1$ for
  some $\al\in\ZZ$ and some
  $\phi_1\in\Filt_{\SL(K)+2}\BNcx^{-1}(K')$.  It follows that
  \begin{equation}\label{eq:al}
  \al\diff(\phi_0)=f(\dpsi(K))\mp\dpsi(K')-\diff(\phi_1).
  \end{equation}
  If $\psi_{0,1}(K')$ denotes the original Plamenevskaya generator
  (which lies over the oriented resolution and labels all the circles
  by $x_-$), then $\langle\diff\phi_0,\psi_{0,1}\rangle=\pm 1$.
  However, all the terms of the right side of the Equation~\eqref{eq:al} lie
  in filtration level $\Filt_{\SL(K)+2}\BNcx(K')$. Since
  $\psi_{0,1}(K')$ is a homogenous element in quantum grading
  $\SL(K)$, this forces $\al=0$. Hence,
  $\phi=\phi_1\in\Filt_{\SL(K)+2}\BNcx^{-1}(K')$.
\end{proof}

\begin{remark}
We do not know if Theorem~\ref{thm:diff-flype-inv} holds generally
without the assumption $k\geq 0$. However,
for all of the examples we know where negative flypes produce
putatively distinct transverse knots, $k$ is at least $2$.
\end{remark}

\subsection{Triviality for simple links}\label{sec:sad}

In Section~\ref{sec:flypes}, we saw that the filtered Plamanevskaya
invariant $\pnpsi$ remains invariant under negative flypes. In this
subsection, we will see that $\pnpsi$ cannot distinguish
different transverse representatives of any link with particularly simple Khovanov
homology.

\begin{proposition}\label{prop:trivial_simple}
Let $K$ and $K'$ be transverse links with the same topological link
type and with $\SL(K)=\SL(K')$. Further assume
$H^{-1}(\BNcx(K)/\Filt_{\SL(K)}\BNcx(K))=0$. If
$f\from\BNcx(K')\to\BNcx(K)$ is the filtered chain map corresponding
to some sequence of Reidemeister moves connecting $K'$ to $K$, then
there exists some $\phi\in\Filt_{\SL(K)}\BNcx^{-1}(K)$ so that
\[
f(\pnpsi(K'))=\pm \pnpsi(K)+\diff\phi.
\]
\end{proposition}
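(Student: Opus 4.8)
The plan is to compare $f(\pnpsi(K'))$ with $\pnpsi(K)$ by first invoking Corollary~\ref{cor:Jake-preserved}, which already tells us that
\[
f(\pnpsi(K'))=\pm\pnpsi(K)+\diff\phi
\]
for \emph{some} $\phi\in\BNcx^{-1}(K)$; the issue is purely one of filtration level, namely showing that $\phi$ can be taken in $\Filt_{\SL(K)}\BNcx^{-1}(K)$. First I would observe that both $f(\pnpsi(K'))$ and $\pnpsi(K)$ lie in $\Filt_{\SL(K)}\BNcx(K)$: for $\pnpsi(K)$ this is by definition, and for $f(\pnpsi(K'))$ it follows because $\pnpsi(K')\in\Filt_{\SL(K')}\BNcx(K')=\Filt_{\SL(K)}\BNcx(K')$ (using $\SL(K)=\SL(K')$) and $f$ is a filtered chain map. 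Hence the cycle $\diff\phi=f(\pnpsi(K'))\mp\pnpsi(K)$ is a boundary lying in $\Filt_{\SL(K)}\BNcx^0(K)$; we want to realize it as the boundary of something already in $\Filt_{\SL(K)}\BNcx^{-1}(K)$.

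The key step is a short exact sequence argument. Consider the short exact sequence of complexes
\[
0\to \Filt_{\SL(K)}\BNcx(K)\to \BNcx(K)\to \BNcx(K)/\Filt_{\SL(K)}\BNcx(K)\to 0,
\]
and its associated long exact sequence in cohomology. We have an element $\phi\in\BNcx^{-1}(K)$ with $\diff\phi\in\Filt_{\SL(K)}\BNcx^{0}(K)$; in particular the image $\bar\phi$ of $\phi$ in $\BNcx^{-1}(K)/\Filt_{\SL(K)}\BNcx^{-1}(K)$ is a cycle, hence defines a class in $H^{-1}(\BNcx(K)/\Filt_{\SL(K)}\BNcx(K))$. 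By hypothesis this group vanishes, so $\bar\phi=\diff\bar\eta$ for some $\bar\eta\in\BNcx^{-2}(K)/\Filt_{\SL(K)}\BNcx^{-2}(K)$; lifting $\bar\eta$ to $\eta\in\BNcx^{-2}(K)$, the element $\phi-\diff\eta$ lies in $\Filt_{\SL(K)}\BNcx^{-1}(K)$, and $\diff(\phi-\diff\eta)=\diff\phi$ is unchanged. Setting $\phi'=\phi-\diff\eta$ gives $f(\pnpsi(K'))=\pm\pnpsi(K)+\diff\phi'$ with $\phi'\in\Filt_{\SL(K)}\BNcx^{-1}(K)$, as desired.

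The main obstacle—really the only nontrivial input—is having the starting point $f(\pnpsi(K'))=\pm\pnpsi(K)+\diff\phi$ with $\phi$ in the \emph{unfiltered} complex; but this is exactly Corollary~\ref{cor:Jake-preserved}, which follows from Corollary~\ref{cor:pre-Jake-preserved} (and ultimately Theorem~\ref{thm:filt-inv} together with Proposition~\ref{prop:neg-stab}). Everything after that is the routine diagram-chase with the filtration short exact sequence above, where the single hypothesis $H^{-1}(\BNcx(K)/\Filt_{\SL(K)}\BNcx(K))=0$ is precisely what is needed to push $\phi$ down into the filtered subcomplex. I would remark that no locality statement is needed here, in contrast to the flype arguments of the previous subsection, since we make no claim about which vertices $\phi$ lives over.
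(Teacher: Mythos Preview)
Your proof is correct and follows essentially the same route as the paper's: both start from Corollary~\ref{cor:Jake-preserved} to obtain an unfiltered $\phi$ (the paper calls it $\theta$), observe that $f(\pnpsi(K'))\mp\pnpsi(K)$ lies in $\Filt_{\SL(K)}\BNcx^0(K)$, and then use the vanishing of $H^{-1}$ of the quotient to adjust $\phi$ into the filtered subcomplex. The only cosmetic difference is that the paper phrases the last step via the connecting homomorphism of the filtration long exact sequence, whereas you unwind that connecting map by hand (writing $\bar\phi=\diff\bar\eta$, lifting, and replacing $\phi$ by $\phi-\diff\eta$); these are the same argument.
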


\begin{proof}
Consider the following short exact sequence:
\[
\xymatrix{
0\ar[r]&\Filt_{\SL(K)}\BNcx^{-1}(K)\ar[r]^-{\iota}\ar[d]^-{\diff}&\BNcx^{-1}(K)\ar[r]^-{\pi}\ar[d]^-{\diff}&
\BNcx^{-1}(K)/\Filt_{\SL(K)}\BNcx^{-1}(K)\ar[r]\ar[d]^-{\diff}&0\\
0\ar[r]&\Filt_{\SL(K)}\BNcx^{0}(K)\ar[r]^-{\iota}&\BNcx^{0}(K)\ar[r]^-{\pi}&
\BNcx^{0}(K)/\Filt_{\SL(K)}\BNcx^{0}(K)\ar[r]&0.
}
\]
Since $f$ is a filtered map,
$f(\pnpsi(K'))\mp\pnpsi(K)\in\Filt_{\SL(K)}\BNcx^0(K)$. By
Corollary~\ref{cor:Jake-preserved}, there exists
$\theta\in\BNcx^{-1}(K)$, so that
\[
\diff\theta=\iota(f(\pnpsi(K'))\mp\pnpsi(K)).
\]
Therefore,
\[
\diff(\pi(\theta))=\pi(\diff\theta)=\pi(\iota(f(\pnpsi(K'))\mp\pnpsi(K)))=0,
\]
and hence $[\pi(\theta)]\in H^{-1}(\BNcx(K)/\Filt_{\SL(K)}\BNcx(K))$
represents some homology element. Furthermore, the connecting
homomorphism $H^{-1}(\BNcx(K)/\Filt_{\SL(K)}\BNcx(K))\to
H^0(\Filt_{\SL(K)}\BNcx(K))$ maps $[\pi(\theta)]$ to
$[f(\pnpsi(K'))\mp\pnpsi(K)]$. Since
$H^{-1}(\BNcx(K)/\Filt_{\SL(K)}\BNcx(K))=0$, $[\pi(\theta)]=0$ and hence
$[f(\pnpsi(K'))\mp\pnpsi(K)]=0$ in $H^0(\Filt_{\SL(K)}\BNcx(K))$.
Therefore, there exists some $\phi\in\Filt_{\SL(K)}\BNcx^{-1}(K)$ so
that
\[
f(\pnpsi(K'))\mp\pnpsi(K)=\diff\phi.\qedhere
\]
\end{proof}

\begin{corollary}\label{cor:filtered_trivial}
  Let $K$ be a topological link, and let $\maxsl(K)$ be the
  maximal self-linking number among all transverse representatives of
  $K$. If $Kh^{-1,j}(K;\Z)=0$ for all $j<\maxsl(K)$, then the
  filtered Plamenevskaya invariant does not distinguish transverse
  representatives of $K$ with the same self-linking number.
\end{corollary}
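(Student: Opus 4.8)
The plan is to deduce Corollary~\ref{cor:filtered_trivial} from Proposition~\ref{prop:trivial_simple} by translating the Khovanov cohomology vanishing hypothesis $Kh^{-1,j}(K;\Z)=0$ for $j<\maxsl(K)$ into the hypothesis $H^{-1}(\BNcx(K)/\Filt_{\SL(K)}\BNcx(K))=0$ for any transverse representative $K$ with $\SL(K)\le\maxsl(K)$. First I would note that for a fixed transverse representative $K$ with $\SL(K)=s\le\maxsl(K)$, the quotient complex $\BNcx(K)/\Filt_{s}\BNcx(K)$ is the part of the Bar-Natan complex in filtration strictly less than $s$, and its associated graded object (with respect to the induced filtration) is the corresponding truncation $\bigoplus_{j<s}\KhCx^{*,j}(K)$ of the Khovanov chain complex. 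Second, I would run the associated-graded spectral sequence (equivalently, induct on the finitely many filtration levels below $s$ using the short exact sequences relating successive quotients): since each graded piece $\KhCx^{*,j}(K)$ with $j<s\le\maxsl(K)$ has $H^{-1}=Kh^{-1,j}(K;\Z)=0$ by hypothesis, and the homological grading is bounded, a straightforward diagram chase shows $H^{-1}$ of the truncated Bar-Natan complex vanishes as well. (Concretely: $\BNcx(K)/\Filt_{s}\BNcx(K)$ is filtered by $\Filt_{j}\BNcx(K)/\Filt_{s}\BNcx(K)$ for $j<s$, with successive quotients the Khovanov pieces in quantum grading $j$; an easy induction using the long exact sequence in cohomology kills $H^{-1}$ at each stage.)

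With that reduction in hand, the conclusion is immediate: given two transverse representatives $K,K'$ of the link type with $\SL(K)=\SL(K')\le\maxsl(K)$, Proposition~\ref{prop:trivial_simple} applies (its hypothesis on $H^{-1}$ is exactly what the previous paragraph established), so any chain map $f$ induced by Reidemeister moves satisfies $f(\pnpsi(K'))=\pm\pnpsi(K)+\diff\phi$ with $\phi\in\Filt_{\SL(K)}\BNcx^{-1}(K)$. Hence $[\pnpsi_{p,q}(K')]=\pm[\pnpsi_{p,q}(K)]$ in $H^0\bigl(\Filt_{\SL(K)+2p}\BNcx(K)/\Filt_{\SL(K)+2q}\BNcx(K)\bigr)$ for all $p\le 0<q$, so the filtered Plamenevskaya invariant agrees (up to the unavoidable overall sign) on all transverse representatives of $K$ with the same self-linking number.

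The main obstacle is the bookkeeping in the first paragraph: one must be careful that the vanishing hypothesis is stated in terms of the integral Khovanov \emph{cohomology} groups $Kh^{-1,j}$, whereas Proposition~\ref{prop:trivial_simple} needs vanishing of $H^{-1}$ of a quotient \emph{complex} of the filtered Bar-Natan complex. These are not literally the same object — the Bar-Natan differential has a filtered (non-quantum-grading-preserving) piece — so the argument genuinely requires the spectral-sequence/induction step rather than a direct identification. Once one observes that the associated graded of $\BNcx(K)/\Filt_{\SL(K)}\BNcx(K)$ is the low-quantum-grading truncation of the Khovanov complex, everything is routine; the only subtlety worth stating explicitly is that we need the hypothesis for \emph{every} transverse representative, which is why the bound $\maxsl(K)$ (rather than $\SL(K)$ for one particular representative) appears in the statement.
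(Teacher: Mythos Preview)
Your proposal is correct and follows essentially the same approach as the paper: invoke Proposition~\ref{prop:trivial_simple} after observing that the Khovanov complex is the associated graded of the Bar-Natan complex, so $\bigoplus_{j<\SL(K)}\Kh^{-1,j}(K)=0$ forces $H^{-1}(\BNcx(K)/\Filt_{\SL(K)}\BNcx(K))=0$. The paper states this implication in one sentence; you have simply made the underlying spectral-sequence/induction step explicit.
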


\begin{proof}
  This follows immediately from
  Proposition~\ref{prop:trivial_simple}. Since the Khovanov chain
  complex is the associated graded object of the Bar-Natan chain
  complex, $\bigoplus_{j<q}\Kh^{-1,j}(K)=0$ implies
  $H^{-1}(\BNcx(K)/\Filt_q\BNcx(K))=0$.
\end{proof}

The maximal self-linking number is known for all knots through $11$
crossings \cite{Ng-arc}, and Corollary~\ref{cor:filtered_trivial}
holds for all topological knot types up to $11$ crossings except the ones in
the following list (with $\ol{K}$ denoting the topological mirror of
$K$):

\parbox{0.8\textwidth}{
$\ol{8_{20}}$,
$\ol{10_{125}}$,
$\ol{10_{126}}$,
$\ol{10_{130}}$,
$\ol{10_{141}}$,
$\ol{10_{143}}$,
$\ol{10_{148}}$,
$10_{155}$,
$\ol{10_{159}}$,
$11n_{22}$,
$11n_{26}$,
$11n_{40}$,
$11n_{46}$,
$\ol{11n_{50}}$,
$11n_{51}$,
$11n_{54}$,
$\ol{11n_{65}}$,
$11n_{71}$,
$\ol{11n_{75}}$,
$\ol{11n_{87}}$,
$\ol{11n_{127}}$,
$\ol{11n_{132}}$,
$\ol{11n_{138}}$,
$11n_{146}$,
$\ol{11n_{159}}$,
$\ol{11n_{172}}$,
$\ol{11n_{176}}$,
$11n_{178}$,
$11n_{184}$.}

\noindent We currently do not know of any distinct transverse representatives
with the same self-linking number, flype-equivalent or otherwise, of
any of the above knot types.

\section{A cohomotopy refinement of the graded Plamenevskaya invariant}\label{sec:cohomotopy}
In~\cite{RS-khovanov} a space-level refinement of Khovanov homology
was given. That is, let $K$ be a link diagram. For each $j$ there is a
(formal desuspension of a) suspension spectrum of a CW complex
$\KhSpace^j(K)$ so that
$\wt{H}^i(\KhSpace^j(K))\cong\Kh^{i,j}(K)$ and the homotopy type
of $\KhSpace^j(K)$ is a link invariant.

In this section we give a space-level refinement of the Plamenevskaya
invariant. That is:
\begin{theorem}\label{thm:cohtpy}
  Associated to a braid diagram $K$ is a map
  \[
  \Psi(K)\co \KhSpace^{\SL(K)}(K)\to \sphere,
  \]
  where $\sphere$ is the sphere spectrum.
  The induced map on cohomology
  \[
  \Psi(K)^*\co \ZZ=\wt{H}^0(\sphere)\to
  \wt{H}^0(\KhSpace^{\SL(K)}(K))\cong \Kh^{0,\SL(K)}(K)
  \]
  sends a generator of $\ZZ$ to the graded Plamenevskaya invariant
  $[\psi_{0,1}(K)]$.

  If $K'$ is another braid diagram representing the same transverse
  link type then there is a commutative diagram
  \[
  \xymatrix{
    \KhSpace^{\SL(K)}(K) \ar[r]^-{\Psi(K)}\ar[d]_{\Phi}^\simeq &
    \sphere\ar[d]^{\simeq}\\
    \KhSpace^{\SL(K)}(K') \ar[r]^-{\Psi(K')} &
    \sphere.
  }
  \]
  (Here, $\Phi$ is the homotopy equivalence induced by a sequence of
  transverse Markov moves connecting $K$ and $K'$, and the map $\sphere\to
  \sphere$ is a self-homotopy equivalence of the sphere spectrum.)

  In other words, there is a transverse invariant $\Psi(K)\in
  \pi^0_s(\KhSpace^{\SL(K)}(K))$, well-defined up to sign and automorphisms of
  $\KhSpace^{\SL(K)}(K)$.
\end{theorem}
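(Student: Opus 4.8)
The plan is to realize $\Psi(K)$ as a quotient map of Khovanov spectra onto the part living over the oriented resolution, and to recognize that part as the sphere spectrum. The first ingredient is combinatorial. Write $u\in\{0,1\}^n$ for the oriented resolution (which is $0$ at each positive crossing and $1$ at each negative crossing), and let $[u,\vec 1]=\{v\in\{0,1\}^n:v\geq u\}$ be the sub-cube in which the negative crossings are frozen at their oriented resolution. I claim that in quantum grading $\SL(K)$ the part of the Khovanov chain complex $\KhCx^{*,\SL(K)}(K)$ supported on $[u,\vec 1]$ is $\Z$, in homological grading $0$, spanned by $\psi_{0,1}(K)$. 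Deleting the negative crossings of $K$ identifies $[u,\vec 1]$ with the full cube of resolutions of a positive braid closure $\widetilde K$ on the same number $m$ of strands, under which $\gr_h$ is unchanged while $\gr_q$ drops by $n_-$; in particular $\gr_q^K=\SL(K)$ matches the bottom ($=\SL(\widetilde K)=n_+-m$) quantum grading of $\widetilde K$. For a positive braid closure one checks directly that this bottom grading contains only the Plamenevskaya generator: a generator $(v,x)$ there forces the resolution $\widetilde K_v$ to have at least $m+|v|$ circles, whereas the oriented resolution has exactly $m$ circles and any single $0\to 1$ change away from it \emph{merges} two distinct Seifert circles (they lie on different components of the identity braid closure), so $\widetilde K_v$ has at most $m+|v|-2$ circles once $|v|\geq 1$; hence $v=u$, $x$ is the all-$x_-$ labeling, and $(v,x)=\psi_{0,1}(K)$. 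Since $\psi_{0,1}(K)$ is a cocycle \cite{Plam-06-KhTrans}, the supported complex is $\Z$ concentrated in degree $0$.

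With this in hand I would assemble the map. The vertices $v\not\geq u$ span a full subcategory of the Khovanov cube flow category of \cite{RS-khovanov} that is closed under its flows (a flow leaving a vertex $w$ lands over some $w'<w$, so if $w\not\geq u$ then $w'\not\geq u$), hence determine a subspectrum of $\KhSpace^{\SL(K)}(K)$, and the corresponding quotient is the realization $\KhSpace^{\SL(K)}([u,\vec 1])$. By the previous paragraph the flow category $\KhFlowCat^{\SL(K)}([u,\vec 1])$ is trivial --- a single object, namely $\psi_{0,1}(K)$, in homological grading $0$, with no non-identity morphisms --- so $\KhSpace^{\SL(K)}([u,\vec 1])$ is the sphere spectrum $\sphere$, and I define $\Psi(K)$ to be the resulting quotient map $\KhSpace^{\SL(K)}(K)\to\sphere$. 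The claim about $\Psi(K)^*$ is then automatic: on cohomology this quotient map is the map induced by the inclusion of subcomplexes $\KhCx^{*,\SL(K)}([u,\vec 1])\hookrightarrow\KhCx^{*,\SL(K)}(K)$, and in homological degree $0$ this sends the generator of $H^0(\KhCx^{*,\SL(K)}([u,\vec 1]))=\Z$ to $[\psi_{0,1}(K)]\in\Kh^{0,\SL(K)}(K)$.

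For the commutative square I would argue exactly as in the proof of \Theorem{filt-inv}: by the transverse Markov theorem it suffices to treat braid-like Reidemeister II and III moves and positive (de)stabilizations. For each such move, the Lipshitz--Sarkar homotopy equivalence $\Phi$ of Khovanov spectra is built from local cancellations, and the spectrum-level analogue of the locality statement of \Proposition{local} shows that $\Phi$ carries the sub-cube at the new oriented resolution onto the sub-cube at the old one --- just as $\pnpsi$ is tracked through those moves in the proof of \Theorem{filt-inv} --- and therefore commutes up to homotopy with the two quotient maps to $\sphere$. For positive (de)stabilization one additionally keeps track of the formal (de)suspension and of the $n_-$-shift of quantum grading, using that $\SL$ is unchanged by transverse Markov moves. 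The bottom edge of the resulting square is then a self-equivalence of $\sphere$, i.e.\ $\pm\mathrm{id}$; the overall sign, together with the dependence on the chosen sequence of moves and on the homotopy type of $\KhSpace^{\SL(K)}(K)$ itself, accounts for the ``up to sign and automorphisms'' ambiguity.

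The combinatorial lemma, the identification $\KhSpace^{\SL(K)}([u,\vec 1])=\sphere$, and the cohomology computation are all routine; the step I expect to be the main obstacle is the last one --- establishing the needed locality of the Reidemeister equivalences of \cite{RS-khovanov} at the spectrum level precisely enough to build the commutative square. Although this is ``morally'' the same locality that underlies \Proposition{local} for the Bar-Natan complex $\BNcx(K)$, here the Khovanov spectrum and its Reidemeister maps are only well-defined up to homotopy and sign, stabilization forces a desuspension, and one must verify that each explicit local model of these equivalences respects the oriented sub-cube in the relevant quantum grading; carrying this out --- essentially transporting \Proposition{local} into the framework of \cite{RS-khovanov} --- is the technical heart of the theorem.
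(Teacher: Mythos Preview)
Your construction of $\Psi(K)$ is correct and in the end produces exactly the same map as the paper, but you take a detour the paper avoids. You restrict to the sub-cube $[u,\vec 1]$ and then prove a combinatorial lemma showing that the only generator of $\KhCx^{*,\SL(K)}(K)$ over $[u,\vec 1]$ is $\psi_{0,1}(K)$. The paper instead observes directly that the single object $\psi_{0,1}(K)$ is, by itself, an upward-closed subcategory of $\KhFlowCat^{\SL(K)}(K)$, simply because $\psi_{0,1}(K)$ is a cocycle (Property~(\ref{item:Hom-empty}) in the paper's list); its realization is a sphere, and collapsing its complement gives $\Psi(K)$. Your lemma then just says your sub-cube coincides with this singleton in the relevant quantum grading, so the two definitions agree, but the paper's route is one line. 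Your combinatorial lemma is nonetheless a nice independent fact.

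For invariance your outline is along the right lines for the Reidemeister part --- the paper indeed tracks the Plamenevskaya cell through the explicit upward/downward-closed subcategories $\Cat_1,\dots,\Cat_4$ used in the Reidemeister-invariance proofs of~\cite{RS-khovanov}, which is the spectrum-level analogue of what you call ``locality'' --- but you have omitted a step that the paper treats carefully: independence of the auxiliary choices used to build $\KhSpace(K)$ (ordering of crossings, sign assignment, framed neat embedding, the parameters $A,B,\epsilon,R$, etc.). Most of these are handled by noting that the relevant equivalences carry cells to corresponding cells and hence commute with the collapse map, but the sign-assignment independence in particular requires a separate argument via a Puppe sequence (embedding both sign assignments into a larger diagram with an extra unknot component). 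Since your square is only meaningful once $\Psi(K)$ is known to be independent of these choices, this step is not optional, and it does not fall under the ``locality of Reidemeister equivalences'' heading you identify as the main obstacle.
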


\subsection{The definition of the invariant}
Recall that the space $\KhSpace^j(L)$ is defined by feeding a framed
flow category $\KhFlowCat^j(L)$ into the Cohen--Jones--Segal
machine~\cite{CJS-gauge-floerhomotopy} (see
also~\cite[Definition~\ref*{KhSp:def:flow-gives-space}]{RS-khovanov}). We
use $\Realize{\Cat}$ for the result of applying the Cohen--Jones--Segal
construction to a framed flow category; note that this is not the same
as taking the geometric realization of the category (even after
viewing it as a topological category).  We will need the following
properties of this construction:
\begin{enumerate}
\item\label{item:realize} The space $\Realize{\Cat}$ has one cell for
  each object of $\Cat$ (in addition to a basepoint).
\item\label{item:upwards} If $\Cat'$ is a full, upwards-closed
  sub-category of $\Cat$ then there is a quotient map
  $\Realize{\Cat}\to\Realize{\Cat'}$ (gotten by collapsing the cells
  corresponding to objects not in $\Cat'$ to the basepoint).
\item\label{item:objects} The category $\KhFlowCat^j(L)$ has one
  object for each generator $(v,x)$ of $\KhCx(L)$ with quantum grading
  $j$.
\item\label{item:Hom-empty} The space $\Hom((v,x),(w,y))$ is non-empty
  if and only if $(v,x)$ and $(w,y)$ can be connected by a sequence of
  differentials in $\KhCx(L)$.
\end{enumerate}

Consider the graded Plamenevskaya generator $(v,x)$. That is, $v$
corresponds to the oriented resolution and $x$ labels each circle in
$v$ by $x_-$.  By Property~(\ref{item:objects}), this generator
corresponds to an object $(v,x)$ of $\KhFlowCat^{\SL}(L)$.  The
generator $(v,x)$ is a cycle, and hence by
Property~(\ref{item:Hom-empty}) the object $(v,x)$ by itself is an
upwards-closed subcategory of $\KhFlowCat^{\SL}(L)$. By
Property~(\ref{item:realize}), the realization $\Realize{\{(v,x)\}}$ of
the subcategory $\{(v,x)\}$ is a sphere $S^N$.  By
Property~(\ref{item:upwards}), there is a quotient map
$\Realize{\KhFlowCat^{\SL}(L)}\to \Realize{\{(v,x)\}}=S^N$. Formally
desuspending $N$ times gives the \emph{cohomotopy Plamenevskaya
  invariant} $\Psi(K)\co \KhSpace^{\SL(K)}(K)\to \sphere$.

\subsection{Invariance under transverse isotopies}
We turn now to invariance of the cohomotopy Plamenevskaya invariant,
i.e., the proof of Theorem~\ref{thm:cohtpy}. First, one more piece of
notation: the map $\Psi(K)$ is induced by a particular cell
$e(K)\in\Realize{\KhFlowCat(K)}$, with the property that no
higher-dimensional cells are attached to $e(K)$. The map $\Psi(K)$ is
an umkehr (wrong way) map associated to $e(K)$, gotten by collapsing
$\Realize{\KhFlowCat(K)}\setminus \mathrm{interior}(e(K))$ to a point.

\begin{proof}[Proof of Theorem~\ref{thm:cohtpy}]
  We have already defined the map $\Phi$, and it is obvious from the
  construction that the induced map on cohomology is the graded
  Plamenevskaya invariant. It remains to prove invariance.  As in the
  proof of Theorem~\ref{thm:filt-inv}, we will check invariance under
  Reidemeister II, braid-like Reiemeiester III, and positive
  stabilization. But first we need to verify independence of the
  auxiliary data used to construct $\KhSpace(K)$.

  Fix a braid closure $K$. Recall that $\KhSpace(K)$ depends on the
  following auxiliary choices:
  \begin{itemize}
  \item An ordering of the crossings of $L$.
  \item A sign assignment $s$ for the cube $\Cube(n)$.
  \item A neat embedding $\iota$ and a framing $\Frame$ for the cube
    flow category $\CubeFlowCat(n)$ relative to $s$.
  \item A framed neat embedding of the Khovanov flow category
    $\KhFlowCat(L)$ relative to some $\TupV{d}$. This framed neat
    embedding is a perturbation of $(\iota,\Frame)$.
  \item Integers $A,B$ and real numbers $\ep,R$ used in the
    construction of the CW complex.
  \end{itemize}
  It is shown
  in~\cite[Proposition~\ref*{KhSp:prop:choice-independent}]{RS-khovanov}
  that up to homotopy equivalence, the space $\KhSpace(K)$ is independent of
  these auxiliary choices. The maps coming from changing $\epsilon$
  and $R$ take each cell homeomorphically to the corresponding cell,
  and hence commute with the maps $e$ and, hence, $\Phi$
  (see~\cite[Lemma~\ref*{KhSp:lem:CW-indep-ep-R-framing}]{RS-khovanov}). The
  maps for changing $A$, $B$ and $\TupV{d}$ again take each cell by a
  degree $1$ map to the corresponding cell (see
  ~\cite[Lemma~\ref*{KhSp:lem:CW-indep-A-B}]{RS-khovanov}) so again
  commute with $e$ and so $\Phi$. Any two perturbations of $\iota$ can
  be connected by a $1$-parameter family. The corresponding map again
  is a homeomorphism taking cells to corresponding cells (this again
  comes from
  Lemma~\cite[Lemma~\ref*{KhSp:lem:CW-indep-ep-R-framing}]{RS-khovanov}),
  so again commutes with $e$ and $\Phi$. The same applies to changing
  $\iota$.

  The proof of independence of the sign assignment goes as
  follows. Let $K'$ be the disjoint union of $K$ with a $1$-crossing
  unknot $U$, so that the $0$-resolution of $U$ has two components
  (say). The cube of resolutions of $K'$ has the cube of resolutions
  of $K$ as two faces, say $f_0$ and $f_1$, corresponding to taking
  the $0$ and $1$ resolution at $U$, respectively. Given sign assignments
  $s_0$ and $s_1$ for $K$ there is a sign assignment $s$ for $K'$ such
  that $s|_{f_i}=s_i$. Consider the subcomplex $Y$ of
  $\Realize{\KhFlowCat(K';s)}$ in which the circle(s) corresponding to
  $U$ are labeled by $x_+$. The subcomplex $Y$ is contractible
  (because $\wt{H}^*(Y)=0$), and there is a cofibration sequence
  \[
  \Realize{\KhFlowCat(K;s_0)}\to Y \to \Realize{\KhFlowCat(K;s_1)}.
  \]
  The Puppe construction then gives the desired homotopy equivalence.
  Now, there is a map $\Xi\co Y\to \DD^{N+1}$ (for an appropriate $N$)
  so that the following diagram commutes:
  \[
  \xymatrix{
    \Realize{\KhFlowCat(K;s_0)}\ar[r]\ar[d]_{\Psi(K;s_0)}&
    Y\ar[r]\ar[d]_{\Xi} & \Realize{\KhFlowCat(K;s_1)}\ar[d]^{\Psi(K;s_1)}\\
    S^N \ar[r]& S^N\cup\DD^{N+1}=\DD^{N+1}\ar[r] & S^{N+1}=\DD^{N+1}/\bdy\DD^{N+1}.
  }
  \]
  Specifically, the map $\Xi$ is given by sending the cell in $f_0$
  corresponding to $e(K)$ to $S^{N}$ and the cell in $f_1$
  corresponding to $e(K)$ to $\DD^{N+1}$. It follows that the Puppe
  map
  $\Realize{\KhFlowCat(K;s_1)}\to\Sigma\Realize{\KhFlowCat(K;s_0)}$
  commutes with $\Psi$.

  Finally, changing the ordering of the crossings has the same effect as a
  particular change of sign assignment, giving invariance under this
  as well. This completes the proof of invariance under the auxiliary
  choices.

  Next, we turn to Reidemeister invariance.  It is convenient in all
  cases to use the ``sub-complex of quotient complex'' trick that we
  used in Theorem~\ref{thm:filt-inv} (where the complexes are, in fact, drawn
  from~Sections~\ref{subsec:r1p}--\ref{subsec:r3}).

  We start with Reidemeister II. Suppose $K'$ is obtained from $K$ by
  a Reidemeister II move introducing two new crossings. Let $n$ be the
  number of crossings in
  $K$. In~\cite[Proposition~\ref*{KhSp:prop:RII}]{RS-khovanov}, the first and third authors
  constructed an upwards-closed subcategory $\Cat_1$ of
  $\KhFlowCat(K')$ so that $\Realize{\Cat_1}$ is contractible. Let
  $\Cat_2$ denote the complementary, downwards-closed subcategory of
  $\KhFlowCat(K')$. A further downwards-closed
  subcategory $\Cat_3$ of $\Cat_2$ was constructed so that $\Realize{\Cat_3}$ is
  contractible and so that the complement $\Cat_4$ of $\Cat_3$ is
  exactly $\KhFlowCat(K)$. Indeed, there is a vertex $u\in\{0,1\}^2$
  so that resolving the two new crossings of $K'$ according to $u$
  gives $K$; and $\Cat_4$ is the sub-cube $\{0,1\}^n\times
  \{u\}\subset \{0,1\}^{n+2}$. In particular, after making compatible
  choices of framed embeddings for the flow categories, there is a map
  $\mathbb{D}^n\to \Realize{\Cat_2}$ making the following
  diagram commute (on the nose):
  \[
  \xymatrix{
    & \mathbb{D}^N \ar[dl]_{e(K')}\ar[dr]^{e(K)}\ar[d]& \\
    \Realize{\KhFlowCat(K')} & \Realize{\Cat_2}\ar[l]^-{\hookleftarrow\Realize{\Cat_2}}_-\simeq\ar[r]_-{/\Realize{\Cat_3}}^-\simeq & \Realize{\KhFlowCat(K)}.
  }
  \]
  Reidemeister II invariance of $\Psi$ follows by replacing $e(K)$ and
  $e(K')$ by the umkehr maps $\Psi(K)\co \Realize{\KhFlowCat(K)}\to
  \mathbb{D}^N/\bdy\mathbb{D}^N$ and $\Psi(K')\co
  \Realize{\KhFlowCat(K')}\to \mathbb{D}^N/\bdy\mathbb{D}^N$.

  The proof of braid-like Reidemeister III invariance is essentially
  the same; only the definitions of the $\Cat_i$ change
  (see~\cite[Proposition~\ref*{KhSp:prop:RIII}]{RS-khovanov}).

  Finally, stabilization invariance is slightly easier. Suppose that
  $K'$ is obtained from $K$ by a positive stabilization. The proof
  of~\cite[Proposition~\ref*{KhSp:prop:RI}]{RS-khovanov} gives an
  upwards-closed subcategory $\Cat_1$ of $\KhFlowCat(K')$ so that
  $\Realize{\Cat_1}$ is contractible, and the complementary
  downwards-closed subcategory $\Cat_2$ of $\KhFlowCat(K')$ is
  identified with $\KhFlowCat(K)$. It is immediate from the definition
  of $\Cat_1$ that the image of $e(K')\co \mathbb{D}^N\to
  \Realize{\KhFlowCat(K')}$ lies inside $\Realize{\Cat_2}$; and agrees
  with $e(K)\co \mathbb{D}^N\to \Realize{\KhFlowCat(K)}$. So, again,
  $\Psi$ is invariant. This concludes the proof.
\end{proof}

\begin{remark}
  A careful reader will observe that in the construction of
  $\KhSpace(K)$, one also made a global choice of ladybug
  matching (see~\cite[Definition~5.6]{RS-khovanov}). For each of the two choices of ladybug matchings
  $\mf{m}_1$ and $\mf{m}_2$, one gets spectra $\KhSpace^j(K;\mf{m}_i)$
  and a transverse invariant
  $\Psi(K;\mf{m}_i)\in\pi^0_s(\KhSpace^{\SL(K)}(K;\mf{m}_i))$. Although
  we show
  in~\cite[Proposition~\ref*{KhSp:prop:ladybug-invariance}]{RS-khovanov}
  that the spectra $\KhSpace^j(K;\mf{m}_1)$ and
  $\KhSpace^j(K;\mf{m}_2)$ are (stably) homotopy equivalent, we do not
  know if these homotopy equivalences carry $\Psi(K;\mf{m}_1)$ to
  $\Psi(K;\mf{m}_2)$. So, whenever we talk about $\Psi(K)$ without referencing the
  choice of ladybug matching, it is implicit that we have already made
  some (global) choice of ladybug matching.

  On the other hand, let $T$ be a transverse link and
  $B=\sigma_{i_1}^{\epsilon_1}\sigma_{i_2}^{\epsilon_2}\cdots\sigma_{i_\ell}^{\epsilon_\ell}$
  a braid representing $T$. Then
  $B^*=\sigma_{i_\ell}^{\epsilon_\ell}\cdots\sigma_{i_2}^{\epsilon_2}\sigma_{i_1}^{\epsilon_1}$
  represents a potentially different transverse link $T^*$, called the
  \emph{transverse mirror} of $T$ \cite{NgThurston}.  The proof
  of~\cite[Proposition~\ref*{KhSp:prop:ladybug-invariance}]{RS-khovanov}
  does show that there is a homotopy equivalence between
  $\KhSpace(K;\mf{m}_1)$ and $\KhSpace(K^*;\mf{m}_2)$ that carries
  $\Psi(K;\mf{m}_1)$ to $\Psi(K^*;\mf{m}_2)$.
\end{remark}

\subsection{Consequences and computable
  invariants}\label{sec:computable}

We conclude by stating some immediate properties of the cohomotopy
refinement of the Plemenevskaya invariant, and suggesting some
further (computable) auxiliary invariants.

\begin{corollary}
  The graded Plamenevskaya invariant $[\psi_{0,1}(K)]$ lies in the
  image of the co-Hurewicz map
  $\pi^0_s(\KhSpace^{\SL(K)}(K))\to \wt{H}^0_s(\KhSpace^{\SL(K)}(K))$.
  In particular, if $\pi^0_s(\KhSpace^{\SL(K)}(K))=0$ then
  $[\psi_{0,1}(K)]=0$.
\end{corollary}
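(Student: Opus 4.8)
The plan is to deduce this corollary directly from Theorem~\ref{thm:cohtpy}, which already produces the cohomotopy refinement $\Psi(K)\in\pi^0_s(\KhSpace^{\SL(K)}(K))$ and identifies its effect on cohomology. Recall that for a spectrum $X$ the co-Hurewicz map $\pi^0_s(X)=[X,\sphere]\to \wt{H}^0_s(X)=[X,H\ZZ]$ is induced by post-composition with the unit map $\sphere\to H\ZZ$; under the identification $\wt{H}^0_s(X)\cong\Hom(\wt{H}_0(X),\ZZ)$ (or, dually, by evaluating the induced map on the canonical generator of $\wt{H}^0(\sphere)\cong\ZZ$), the image of a stable map $g\co X\to\sphere$ is exactly $g^*$ applied to that generator. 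Applying this with $X=\KhSpace^{\SL(K)}(K)$ and $g=\Psi(K)$, the second statement of Theorem~\ref{thm:cohtpy} says precisely that the co-Hurewicz image of $\Psi(K)$ is $[\psi_{0,1}(K)]\in \Kh^{0,\SL(K)}(K)\cong\wt{H}^0(\KhSpace^{\SL(K)}(K))$.

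Concretely, I would argue as follows. First invoke Theorem~\ref{thm:cohtpy} to obtain the map $\Psi(K)\co\KhSpace^{\SL(K)}(K)\to\sphere$ and the identity $\Psi(K)^*(1)=[\psi_{0,1}(K)]$, where $1$ generates $\wt{H}^0(\sphere)\cong\ZZ$. Next, observe that the composite $\KhSpace^{\SL(K)}(K)\xrightarrow{\Psi(K)}\sphere\to H\ZZ$ represents, on the level of $\wt{H}^0$, the class $\Psi(K)^*(1)$; this is just the statement that the Hurewicz/co-Hurewicz map is natural and sends the class of $\Psi(K)$ to the cohomology class it induces. Therefore $[\psi_{0,1}(K)]$ lies in the image of the co-Hurewicz map. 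The ``in particular'' clause is then immediate: if $\pi^0_s(\KhSpace^{\SL(K)}(K))=0$, the domain of the co-Hurewicz map is trivial, so its image is $0$, forcing $[\psi_{0,1}(K)]=0$.

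There is essentially no obstacle here; the only point requiring a word of care is the bookkeeping identifying the co-Hurewicz image of a stable cohomotopy class with the induced map on (reduced) cohomology, together with the grading bookkeeping $\wt{H}^0(\KhSpace^{\SL(K)}(K))\cong\Kh^{0,\SL(K)}(K)$ already recorded in the statement of Theorem~\ref{thm:cohtpy}. Since $\Psi(K)$ is only well-defined up to sign and up to automorphisms of $\KhSpace^{\SL(K)}(K)$, one should note that neither ambiguity affects membership in the image of the co-Hurewicz map: sign changes and automorphisms act on $\pi^0_s$ and on $\wt{H}^0_s$ compatibly, so the image is a well-defined subset and $[\psi_{0,1}(K)]$ lands in it regardless of these choices.
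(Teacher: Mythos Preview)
Your proposal is correct and matches the paper's approach: the paper states this corollary without proof, treating it as an immediate consequence of Theorem~\ref{thm:cohtpy}, and your argument spells out exactly that implication via the co-Hurewicz map.
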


\begin{corollary}\label{cor:cohtpy-boring} If the co-Hurewicz map
  $\pi^0_s(\KhSpace^{\SL(K)}(K))\to \wt{H}^0_s(\KhSpace^{\SL(K)}(K))$
  is injective then the cohomotopy Plamenevskaya invariant $\Psi(K)$
  is determined by the graded Plamenevskaya invariant
  $[\psi_{0,1}(K)]$. In particular, if $\Kh^{i,\SL(K)}(K;\Z)=0$ for all
  $i>0$ then $\Psi(K)$ is determined by $[\psi_{0,1}(K)]$.
\end{corollary}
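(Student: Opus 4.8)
Write $X \coloneqq \KhSpace^{\SL(K)}(K)$ and let $h_X\co \pi^0_s(X)\to \wt{H}^0_s(X)\cong \Kh^{0,\SL(K)}(K)$ denote the co-Hurewicz map, i.e.\ the map on $[X,-]$ induced by the unit $\sphere\to H\Z$. By Theorem~\ref{thm:cohtpy}, $h_X(\Psi(K))=[\psi_{0,1}(K)]$. If $K'$ is a second braid diagram for the same transverse link and $\Phi\co X\to\KhSpace^{\SL(K')}(K')$ is the homotopy equivalence of Theorem~\ref{thm:cohtpy}, then naturality of $h$ with respect to $\Phi$, together with $\Phi^{*}[\psi_{0,1}(K')]=\pm[\psi_{0,1}(K)]$, shows that both $\Phi^{*}\Psi(K')$ and $\Psi(K)$ are carried by $h_X$ to $[\psi_{0,1}(K)]$, up to the overall sign of Theorem~\ref{thm:cohtpy}. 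Hence, as soon as $h_X$ is injective, $\Psi(K)$ is the unique $h_X$-preimage of $[\psi_{0,1}(K)]$, so it is recovered (up to sign) from the pair $\bigl(X,[\psi_{0,1}(K)]\bigr)$; this is the precise sense in which $\Psi(K)$ is ``determined by $[\psi_{0,1}(K)]$''. So the first sentence is immediate, and it remains to prove the ``in particular'' clause: that $\Kh^{i,\SL(K)}(K;\Z)=0$ for all $i>0$ forces $h_X$ to be injective.

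For this I would pass to Spanier--Whitehead duals. Since $\KhSpace^{\SL(K)}(K)$ is a formal desuspension of the suspension spectrum of a finite CW complex, $X$ is a finite spectrum, so its dual $DX$ is defined and satisfies $\wt{H}_i(DX;\Z)\cong\wt{H}^{-i}(X;\Z)\cong\Kh^{-i,\SL(K)}(K)$. By hypothesis this vanishes for all $i<0$, so $DX$ is a finite spectrum with integral homology concentrated in non-negative degrees, hence is connective. The stable Hurewicz theorem then gives that the Hurewicz map $\pi_0^s(DX)\to \wt{H}_0(DX;\Z)$ is an isomorphism. Finally, S-duality identifies $\pi_0^s(DX)\cong [X,\sphere]=\pi^0_s(X)$ and $\wt{H}_0(DX;\Z)\cong\wt{H}^0_s(X)$, and — by naturality of duality with respect to the ring map $\sphere\to H\Z$ — carries the Hurewicz map of $DX$ to $h_X$. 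Therefore $h_X$ is an isomorphism, in particular injective, which completes the proof. (One could equally avoid duality: $h_X$ fits into the long exact sequence of $F\coloneqq\mathrm{fib}(\sphere\to H\Z)$, the spectrum $F$ is $0$-connected, and since the universal coefficient theorem forces $\wt{H}_i(X;\Z)=0$ for $i>0$, an Atiyah--Hirzebruch argument yields $[X,F]=0$, which is exactly injectivity of $h_X$.)

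The argument is entirely soft, so there is no real obstacle; the only things that require care are bookkeeping. One should check that Spanier--Whitehead duality genuinely intertwines the co-Hurewicz map of $X$ with the Hurewicz map of $DX$ — this is standard, but it is the one nontrivial compatibility needed — and one should track the grading shifts carefully: the Plamenevskaya class lies in $\wt{H}^0$, the hypothesis annihilates $\wt{H}^{>0}$, and it is the \emph{dual} spectrum that thereby becomes connective.
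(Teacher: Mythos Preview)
Your argument is correct and matches the paper's approach: the paper proves the first sentence in one line (injectivity of $h_X$ plus $h_X(\Psi(K))=[\psi_{0,1}(K)]$) and deduces the second by invoking the Hopf classification theorem from Spanier's duality paper, which is exactly the statement you reprove by passing to $DX$ and applying the stable Hurewicz theorem. In effect you have unpacked the paper's one-line citation; the only step you might tighten is ``homology concentrated in non-negative degrees, hence connective'' --- this uses that $DX$, being finite, is bounded below, so that iterated stable Hurewicz applies --- but this is standard and not a gap.
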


\begin{proof}
  The first part is immediate since $\Psi(K)$ maps to
  $[\psi_{0,1}(K)]$ under the co-Hurewicz map. The second part follows
  from the first part and the Hopf classification theorem (which is
  dual to Hurewicz theorem; see~\cite{Spa-top-duality}) which asserts
  that if $\Kh^{i,\SL(K)}(K;\Z)=\wt{H}^i(\KhSpace^{\SL(K)}(K);\Z)=0$ for all
  $i>0$, then the co-Hurewicz map $\pi^0_s(\KhSpace^{\SL(K)}(K))\to
  \wt{H}^0_s(\KhSpace^{\SL(K)}(K))$ is an isomorphism.
\end{proof}
\noindent It can be checked that Corollary~\ref{cor:cohtpy-boring} applies to
all topological knot types up to $11$ crossings, and all $12$-crossing
knots except for $12n_{749}$. That is, for these knots,
$\Kh^{i,j}(K;\Z)=0$ for all $i>0$ and all
$j\leq\maxsl(K)$ (recall $\maxsl$ denotes the maximal self-linking
number).
 Therefore, for any transverse representative
of any of these knot types, $\Psi$ will be determined by
$[\psi_{0,1}]$. To date, we have not been able to use the cohomotopy
Plamenevskaya invariant $\Psi$ to distinguish transverse links with
the same self-linking number.

We conclude by mentioning two strategies for using the
cohomotopy invariant or the Khovanov homotopy type that could
conceivably be useful.
Suppose that $K_1$ and $K_2$ are transverse representatives for the
same topological link type.

\begin{idea}\label{idea:sq-image}The spectrum $\KhSpace(K)$ induces
  stable cohomology operations like the Steenrod squares $\Sq^k\co
  \wt{H}^i(\KhSpace^j(K);\Field_2)\to\allowbreak
  \wt{H}^{i+k}(\KhSpace^j(K);\Field_2)$.  Perhaps for some choice of
  $k$, we have $[\psi_{0,1}(K_1)]\in\image(\Sq^k)$ and
  $[\psi_{0,1}(K_2)]\not\in\image(\Sq^k)$. Or, perhaps for some choice
  of $k$, $[\psi_{0,1}(K_1)]\in\ker(\Sq^k)$ and
  $[\psi_{0,1}(K_2)]\not\in\ker(\Sq^k)$.  Since the Reidemeister
  isomorphisms commute with the action of $\Sq^k$, either of these
  phenomena would distinguish $K_1$ and $K_2$.
\end{idea}
\noindent In general, computing the Steenrod squares for an arbitrary CW complex
(with exponentially many cells) is not so easy. In~\cite{RS-steenrod},
it is explained how to compute the operation $\Sq^2$, and this would be a
reasonable place to start exploring Idea~\ref{idea:sq-image}.

\begin{idea}\label{idea:Hopf}
  Let $\Cone(\Psi)$ denote the mapping cone of $\Psi$. Then the stable
  homotopy type of $\Cone(\Psi(K))$ is a transverse invariant. So,
  perhaps for some choices of $K_1$ and $K_2$, the mapping cones of
  $\Psi(K_i)$ distinguish the $K_i$.
\end{idea}
\noindent For example, one could try to use the action of Steenrod
squares to distinguish $\Cone(\Psi(K_1))$ and $\Cone(\Psi(K_2))$. In
particular, it should be reasonably straightforward to extend
techniques from~\cite{RS-steenrod} to compute the operation $\Sq^2$ on
$\Cone(\Psi(K))$.

\bibliographystyle{myalpha}
\bibliography{newbibfile}

\providecommand{\bysame}{\leavevmode\hbox to3em{\hrulefill}\thinspace}
\providecommand{\MR}{\relax\ifhmode\unskip\space\fi MR }
\providecommand{\MRhref}[2]{%
  \href{http://www.ams.org/mathscinet-getitem?mr=#1}{#2}
}
\providecommand{\href}[2]{#2}
\begin{thebibliography}{LOSS09}

\bibitem[Bal11]{Baldwin-hf-s-seq}
John~A. Baldwin, \emph{On the spectral sequence from {K}hovanov homology to
  {H}eegaard {F}loer homology}, Int. Math. Res. Not. IMRN (2011), no.~15,
  3426--3470. \MR{2822178 (2012g:57021)}

\bibitem[Ben83]{Bennequin}
Daniel Bennequin, \emph{Entrelacements et \'equations de {P}faff}, Third
  {S}chnepfenried geometry conference, {V}ol. 1 ({S}chnepfenried, 1982),
  Ast\'erisque, vol. 107, Soc. Math. France, Paris, 1983, pp.~87--161.
  \MR{753131 (86e:58070)}

\bibitem[BM93]{BirmanMenasco-3braid}
Joan~S. Birman and William~W. Menasco, \emph{Studying links via closed braids.
  {III}. {C}lassifying links which are closed {$3$}-braids}, Pacific J. Math.
  \textbf{161} (1993), no.~1, 25--113. \MR{1237139 (94i:57005)}

\bibitem[BM06a]{BirmanMenasco-MTWSI}
\bysame, \emph{Stabilization in the braid groups. {I}. {MTWS}}, Geom. Topol.
  \textbf{10} (2006), 413--540. \MR{2224463 (2007e:57005)}

\bibitem[BM06b]{BirmanMenasco-transverse}
\bysame, \emph{Stabilization in the braid groups. {II}. {T}ransversal
  simplicity of knots}, Geom. Topol. \textbf{10} (2006), 1425--1452
  (electronic). \MR{2255503 (2007k:57005)}

\bibitem[BN05]{Bar-kh-tangle-cob}
Dror Bar-Natan, \emph{Khovanov's homology for tangles and cobordisms}, Geom.
  Topol. \textbf{9} (2005), 1443--1499. \MR{2174270 (2006g:57017)}

\bibitem[Che02]{Chekanov-DGA}
Yuri Chekanov, \emph{Differential algebra of {L}egendrian links}, Invent. Math.
  \textbf{150} (2002), no.~3, 441--483. \MR{1946550 (2003m:53153)}

\bibitem[CJS95]{CJS-gauge-floerhomotopy}
R.~L. Cohen, J.~D.~S. Jones, and G.~B. Segal, \emph{Floer's
  infinite-dimensional {M}orse theory and homotopy theory}, The {F}loer
  memorial volume, Progr. Math., vol. 133, Birkh\"auser, Basel, 1995,
  pp.~297--325. \MR{1362832 (96i:55012)}

\bibitem[EENS]{EENS-transverse}
Tobias Ekholm, John Etnyre, Lenhard Ng, and Michael Sullivan, \emph{Filtrations
  on the knot contact homology of transverse knots}, Math. Annalen, to appear;
  arXiv:1010.0450.

\bibitem[EFM01]{EpsteinFuchsMeyer}
Judith Epstein, Dmitry Fuchs, and Maike Meyer, \emph{Chekanov-{E}liashberg
  invariants and transverse approximations of {L}egendrian knots}, Pacific J.
  Math. \textbf{201} (2001), no.~1, 89--106. \MR{1867893 (2002h:57020)}

\bibitem[EH05]{EtnyreHonda-cabling}
John~B. Etnyre and Ko~Honda, \emph{Cabling and transverse simplicity}, Ann. of
  Math. (2) \textbf{162} (2005), no.~3, 1305--1333. \MR{2179731 (2006j:57051)}

\bibitem[ELT12]{EtnyreLafountainTosun}
John~B. Etnyre, Douglas~J. LaFountain, and B{\"u}lent Tosun, \emph{Legendrian
  and transverse cables of positive torus knots}, Geom. Topol. \textbf{16}
  (2012), no.~3, 1639--1689. \MR{2967060}

\bibitem[EN03]{EtnyreNg-problems}
John~B. Etnyre and Lenhard~L. Ng, \emph{Problems in low dimensional contact
  topology}, Topology and geometry of manifolds ({A}thens, {GA}, 2001), Proc.
  Sympos. Pure Math., vol.~71, Amer. Math. Soc., Providence, RI, 2003,
  pp.~337--357. \MR{2024641 (2004k:57033)}

\bibitem[ENV13]{EtnyreNgVertesi}
John Etnyre, Lenhard Ng, and Vera V\'ertesi, \emph{Legendrian and transverse
  twist knots}, J. Eur. Math. Soc. (JEMS) \textbf{15} (2013), no.~3, 969--995.

\bibitem[Etn05]{Etnyresurvey}
John~B. Etnyre, \emph{Legendrian and transversal knots}, Handbook of knot
  theory, Elsevier B. V., Amsterdam, 2005, pp.~105--185. \MR{2179261
  (2006j:57050)}

\bibitem[Kho06a]{Kho-kh-cob}
Mikhail Khovanov, \emph{An invariant of tangle cobordisms}, Trans. Amer. Math.
  Soc. \textbf{358} (2006), no.~1, 315--327 (electronic). \MR{2171235
  (2006g:57046)}

\bibitem[Kho06b]{Kho-kh-Frobenius}
\bysame, \emph{Link homology and {F}robenius extensions}, Fund. Math.
  \textbf{190} (2006), 179--190. \MR{2232858 (2007i:57010)}

\bibitem[KN10]{KhandhawitNg}
Tirasan Khandhawit and Lenhard Ng, \emph{A family of transversely nonsimple
  knots}, Algebr. Geom. Topol. \textbf{10} (2010), no.~1, 293--314. \MR{2602837
  (2011e:57042)}

\bibitem[Lee05]{Lee-kh-endomorphism}
Eun~Soo Lee, \emph{An endomorphism of the {K}hovanov invariant}, Adv. Math.
  \textbf{197} (2005), no.~2, 554--586. \MR{2173845 (2006g:57024)}

\bibitem[LOSS09]{LOSS}
Paolo Lisca, Peter Ozsv{\'a}th, Andr{\'a}s~I. Stipsicz, and Zolt{\'a}n
  Szab{\'o}, \emph{Heegaard {F}loer invariants of {L}egendrian knots in contact
  three-manifolds}, J. Eur. Math. Soc. (JEMS) \textbf{11} (2009), no.~6,
  1307--1363. \MR{2557137 (2010j:57016)}

\bibitem[LSa]{RS-khovanov}
Robert Lipshitz and Sucharit Sarkar, \emph{A {K}hovanov homotopy type},
  arXiv:1112.3932.

\bibitem[LSb]{RS-s}
\bysame, \emph{A refinement of {R}asmussen's {$s$}-invariant}, arXiv:1206.3532.

\bibitem[LSc]{RS-steenrod}
\bysame, \emph{A {S}teenrod square on {K}hovanov homology}, arXiv:1204.5776.

\bibitem[MTV07]{MTV-Kh-s-invts}
Marco Mackaay, Paul Turner, and Pedro Vaz, \emph{A remark on {R}asmussen's
  invariant of knots}, J. Knot Theory Ramifications \textbf{16} (2007), no.~3,
  333--344. \MR{2320159 (2008i:57007)}

\bibitem[Ng11]{Ng-transhom}
Lenhard Ng, \emph{Combinatorial knot contact homology and transverse knots},
  Adv. Math. \textbf{227} (2011), no.~6, 2189--2219. \MR{2807087 (2012j:53116)}

\bibitem[Ng12]{Ng-arc}
\bysame, \emph{On arc index and maximal {T}hurston-{B}ennequin number}, J. Knot
  Theory Ramifications \textbf{21} (2012), no.~4, 1250031, 11. \MR{2890458}

\bibitem[NOT08]{NOT-hf-transverse}
Lenhard Ng, Peter Ozsv{\'a}th, and Dylan Thurston, \emph{Transverse knots
  distinguished by knot {F}loer homology}, J. Symplectic Geom. \textbf{6}
  (2008), no.~4, 461--490. \MR{2471100 (2009j:57014)}

\bibitem[NT09]{NgThurston}
Lenhard Ng and Dylan Thurston, \emph{Grid diagrams, braids, and contact
  geometry}, Proceedings of {G}\"okova {G}eometry-{T}opology {C}onference 2008,
  G\"okova Geometry/Topology Conference (GGT), G\"okova, 2009, pp.~120--136.
  \MR{2500576 (2011a:57022)}

\bibitem[OS03]{OS-knot-transMarkov}
S.~Yu. Orevkov and V.~V. Shevchishin, \emph{Markov theorem for transversal
  links}, J. Knot Theory Ramifications \textbf{12} (2003), no.~7, 905--913.
  \MR{2017961 (2004j:57011)}

\bibitem[OSzT08]{OSzT-hf-legendrian}
Peter Ozsv{\'a}th, Zolt{\'a}n Szab{\'o}, and Dylan Thurston, \emph{Legendrian
  knots, transverse knots and combinatorial {F}loer homology}, Geom. Topol.
  \textbf{12} (2008), no.~2, 941--980. \MR{2403802 (2009f:57051)}

\bibitem[Pla06]{Plam-06-KhTrans}
Olga Plamenevskaya, \emph{Transverse knots and {K}hovanov homology}, Math. Res.
  Lett. \textbf{13} (2006), no.~4, 571--586. \MR{2250492 (2007d:57043)}

\bibitem[Ras10]{Ras-kh-slice}
Jacob Rasmussen, \emph{Khovanov homology and the slice genus}, Invent. Math.
  \textbf{182} (2010), no.~2, 419--447. \MR{2729272 (2011k:57020)}

\bibitem[Spa56]{Spa-top-duality}
E.~H. Spanier, \emph{Duality and {${\rm S}$}-theory}, Bull. Amer. Math. Soc.
  \textbf{62} (1956), 194--203. \MR{0085506 (19,51d)}

\bibitem[Tur06]{Turner-kh-BNseq}
Paul~R. Turner, \emph{Calculating {B}ar-{N}atan's characteristic two {K}hovanov
  homology}, J. Knot Theory Ramifications \textbf{15} (2006), no.~10,
  1335--1356. \MR{2286127 (2008b:57014)}

\bibitem[Wri02]{Wri-knot-transMarkov}
Nancy~Court Wrinkle, \emph{The {M}arkov theorem for transverse knots}, ProQuest
  LLC, Ann Arbor, MI, 2002, Thesis (Ph.D.)--Columbia University. \MR{2703285}

\bibitem[Wu08]{Wu-transverse}
Hao Wu, \emph{Braids, transversal links and the {K}hovanov-{R}ozansky theory},
  Trans. Amer. Math. Soc. \textbf{360} (2008), no.~7, 3365--3389. \MR{2386230
  (2009h:57010)}

\end{thebibliography}

\end{document}